\theoremstyle{plain}
\newtheorem{theorem}{Theorem}[section]
\newtheorem{prop}[theorem]{Proposition}
\newtheorem{lemme}[theorem]{Lemma}
\newtheorem{corol}[theorem]{Corollary}
\newtheorem{hyp}[theorem]{Assumption}
\newtheorem{rque}[theorem]{Remark}
\newcommand{\be} {\begin{equation}}
\newcommand{\ee} {\end{equation}}
\newcommand{\bea} {\begin{eqnarray}}
\newcommand{\eea} {\end{eqnarray}}
\newcommand{\Bea} {\begin{eqnarray*}}
\newcommand{\Eea} {\end{eqnarray*}}
\numberwithin{equation}{section}
\def\P{\mathbb{P}}
\renewcommand\r{\mathbb{R}}
\newcommand{\E}{\mathbb{E}}
\newcommand\rr\right
\newcommand\un{\mathbf{1}}
\newcommand\uno[1]{\un_{\left\{#1\right\}}}
\newcommand{\N}{\mathcal{N}}
\newcommand\n{\mathbb{N}}
\newcommand{\R}{\mathbb{R}}
\newcommand{\eps}{\varepsilon}
\newcommand{\pro}[1]{\mathbb{P}\left(#1\right)}
\newcommand{\esp}[1]{\mathbb{E}\left[#1\right]}
\begin{document}

\begin{frontmatter}
	
\title{Sharp approximation and hitting times for stochastic invasion processes}
	
\runtitle{Sharp approximation for invasion processes}

\begin{aug}

\author{\fnms{Vincent} \snm{Bansaye}\thanksref{m4}\ead[label=e4]{vincent.bansaye@polytechnique.edu}},
\author{\fnms{Xavier} \snm{Erny}\thanksref{m4}\ead[label=e5]{xavier.erny@polytechnique.edu}}
\and \author{\fnms{Sylvie} \snm{M\'el\'eard}\thanksref{m4}\ead[label=e6]{xavier.erny@polytechnique.edu}}

\address{\thanksmark{m4} CMAP, CNRS, \'Ecole polytechnique, Institut Polytechnique de
Paris, 91120 Palaiseau, France}

\runauthor{V. Bansaye et al.}
\end{aug}



 \bigskip \emph{This article is dedicated to Francis Comets, whose mathematical curiosity and enthusiasm have always been a source of inspiration}.
 
\begin{abstract}
We are interested in the invasion phase for stochastic processes with interactions.  A single mutant with positive fitness arrives in a {large} resident population at equilibrium. By a now classical approach, the first stage of the invasion is well approximated by a branching process. The macroscopic phase, when the mutant population is of the same order as the resident population, is described by the limiting dynamical system.
We   capture   the intermediate mesoscopic phase for the invasive population and obtain sharp approximations. It allows us  to  describe the fluctuations of the hitting times of thresholds, 
which inherit a large variance from the first stage.
We apply our results to two models which are original motivations. {In particular, we quantify   the hitting times of critical values in cancer emergence and epidemics. }
\end{abstract}

\begin{keyword}[class=MSC]
        \kwd{37N25}
        \kwd{60F15}
        \kwd{60G55}
        \kwd{60H10}
\end{keyword}

 \begin{keyword}
 \kwd{Stochastic invasion process}
    \kwd{Branching process}
    \kwd{Dynamical system}
    \kwd{Stochastic processes approximation}
    \kwd{Hitting times approximation}
 \end{keyword}
\end{frontmatter}
  
\section{Introduction and main results}\label{sectionmodel}

We  aim to finely quantify a mutant invasion in a resident population at equilibrium. Such a situation is standard in eco-evolution, in cancer  emergence or in epidemiology when a single individual particularly well adapted  can develop its own subpopulation and invade the global population.   After a certain amount of time, the number of mutants becomes non negligible with respect to the resident population size, allowing to summarize the dynamics of  the stochastic population processes by their deterministic approximations. 
In Champagnat \cite{champagnat06}, a systematic approach was introduced to quantify the mutant invasion success, based on the  properties of the mutant birth and death process and its coupling with  branching processes  for which the survival probability was easily computed. That allowed to characterize the probability for the mutant process to attain a certain fixed threshold. This approach has been used by many authors, see e.g. \cite{champagnatmeleard2011, billiardferrieremeleardtran, baarbovierchampagnat, billiardsmadi,boviercoquilleneukirch}. 

To go in details in this invasion process, we introduce some scaling parameter~$K${, whose values are typically large,} characterizing the macroscopic population sizes and allowing to quantify each invasion step.
We consider a  bi-type birth and death process modeling the population size dynamics of  interacting resident and mutant individuals. 
We denote by  $N^K_R(t)$ (resp. $N^K_M(t)$) the number of resident individuals (resp. mutant individuals) at time~$t$ and the density of populations is defined by
$$X^K(t)=(X^K_R(t),X^K_M(t)) := (N^K_R(t)/K,N^K_M(t)/K).$$
The interaction is modeled through density dependence of individual birth and death rates, respectively $b_{\bullet}(X^K), \, d_{\bullet}(X^K)$ for $\bullet\in \{R,M\}$.
This dependence  may  model for instance  competition for resources, which   can make the death rates increase or the birth rates decrease.\\

 The processes $(N^K_{R}, N^K_{M})$   are   defined  on a probability space $\,(\Omega, {\cal F}, \mathbb{P})\,$, as strong  solutions (until potential explosion time) of  the stochastic differential systems driven by  Poisson point measures  $\N^b_R,\N^d_R,\N^b_M,\N^d_M$ with Lebesgue measure intensity on $\mathbb{R}_{+} \times \mathbb{R}_{+} $: 
\begin{align}
\label{def:stochproc}
N^K_\bullet(t)=N^K_\bullet(0) &+ \int_{[0,t]\times\r_+}\uno{u\leq N^K_\bullet(s-) b_\bullet(X^K(s-))}\N^b_\bullet(ds,du)\\
&-\int_{[0,t]\times\r_+}\uno{u\leq N^K_\bullet(s-) d_\bullet(X^K(s-))}\N^d_\bullet(ds,du),\nonumber
\end{align}
 for $\bullet\in\{R,M\}$, $K>0$ and $t\geq 0$. 
We assume  that $\N^b_R$ and $\N^d_R$ are independent and that $\N^b_M$ and $\N^d_M$ are independent too. We also assume that the Poisson point measures $(\N^b_R, \N^d_R, \N^b_M, \N^d_M)$ are independent of the initial condition $N^K(0)$. But $(\N^b_R,\N^d_R)$ and $(\N^b_M,\N^d_M)$ are not necessarily independent. We refer to Section~\ref{secECO} for an application in evolution where  all these Poisson point processes are independent  and to Section~\ref{secSIR} for an example in epidemiology where they are dependent.\\
The individual growth rates are defined on $\mathbb{R}_{+} \times \mathbb{R}_{+}$ by
$$F_{R}(x_R,x_M) = b_{R}(x_R,x_M) - d_{R}(x_R,x_M) ; \qquad F_{M}(x_R,x_M) = b_{M}(x_R,x_M) - d_{M}(x_R,x_M).$$ 

It is well known (cf \cite{K1,K2,ethierkurtz}) that under suitable assumptions on the parameters and when the initial conditions of the  process are proportional to $K$, the stochastic process $\,((X^K_R(t),X^K_M(t)): {t\geq 0})\,$ converges in probability{, and even almost surely and in~$L^2$} (when $K$ tends to infinity), on any finite time interval. The limit is the solution $(x_R(t),x_M(t))_{t\geq 0}$ of  the dynamical system
\begin{eqnarray}
\label{def:dynsys}
\begin{cases}
x_{R}'(t) &= x_{R}(t)\,F_{R}(x_{R}(t),x_{M}(t)) \\
x_{M}'(t) &= x_{M}(t)\,F_{M}(x_{R}(t),x_{M}(t)).
\end{cases}
\end{eqnarray}
We assume the existence of some (non-necessary unique {nor stable}) equilibrium $x^{\star}_{R}>0$ satisfying
$$F_{R}(x^{\star}_{R},0) = b_{R}(x^{\star}_{R},0) - d_{R}(x^{\star}_{R},0)=0.$$ 
In this work, we start from one mutant inside a resident population close to this equilibrium : 
\begin{equation}
\label{hitparade}
N^K(0)=(N_R^K(0), 1), \qquad N_R^K(0)\sim x_R^{\star} K.
\end{equation}
Thus $X^K(0)$ is close to $(x_R^{\star},0)$ and
the mutant process  $(N^K_{M}(t),t\ge 0)$ will firstly be close to the branching process $Z$  with individual birth and death rates respectively 
$$b_{\star} = b_{M}(x^{\star}_{R},0)  \ ; \ d_{\star} = d_{M}(x^{\star}_{R},0).$$ 
We assume that the initial growth rate of this process is positive
\begin{equation}
\label{hyp=growth}
r_{\star}= b_{\star}-d_{\star} = F_M(x^\star_R,0) >0.\end{equation}
This ensures that  the mutant population process  attains  values of order $K$ with positive probability,  at a time scale $\log K$. \\

Our ambition is    to finely  capture  the dynamics of the invasion process until  macroscopic levels, and to focus in particular on the intermediate scale, where  the mutant process  is large but negligible compared to $K$.
We consider  the hitting times of successive levels $n\in\n^\star$ of the mutant population process   when the initial condition is $N^K(0) = n_0  \in\n ^2$:
\begin{equation}
\label{temps-mut}
    T^{K}_{M}(n_0,n)=\inf\{t \geq 0 :N^K_{M}(t) \geq n\}=\inf\{t \geq 0 :X^K_{M}(t) \geq n/K\}.
    \end{equation}

Our first result  proves   that the mutant process is  equivalent to the branching process $Z$ 
before reaching  macroscopic level of order $K$, in the following sense, see Section \ref{sectionbranch}.  For any  $\xi_K\ll   K/(\log K)^{a} $  and $\eta>0$, we show that 
\begin{align*}
&\lim_{K\to +\infty} \mathbb{P}\bigg(\underset{t\leq T^K_{M}(N^K(0),\xi_K)}{\sup}~ \left|\frac{N^K_M(t)}{Z(t)}-1\rr| > \eta \, ;  \,  W>0\bigg)\\
&\qquad =\lim_{T\rightarrow \infty} \limsup_{K\rightarrow \infty} \mathbb{P}\bigg(\underset{T\leq t\leq T^K_{M}(N^K(0),\xi_K)}{\sup}~ \left|\frac{N^K_M(t)}{We^{r_{\star}t}}-1\rr| > \eta \, ;  \,  W>0\bigg)=0,
\end{align*}
where $$W=\lim_{t\rightarrow \infty} Z(t)e^{-r_{\star}t} \in [0,\infty) \qquad a.s.$$
is an exponential variable with an additional atom at $0$ corresponding to the extinction event (cf. Durrett \cite{durrett}). 
{The meaning of the above convergence is that, as long as the mutant population has not reached the threshold~$\xi_K$, the size of the population~$N^K_M(t)$ is equivalent in probability to the branching process~$Z(t)$ as~$K$ goes to infinity.} The value of $a$ is  $1$ or $2$ depending on the exponential stability of the equilibrium of the resident population alone, see  forthcoming Theorem~\ref{mainbranch} for a precise statement. These two cases correspond to a hyperbolic or a partially hyperbolic equilibrium. 

For the proof of the theorem, we use a coupling 
of the mutant process and the branching process and  we need to control the distance between them. We use that individuals  belonging to one of the two populations but not to the other in the coupling can  be seen as a branching structure with an additional immigration whose intensity increases with the value of $Z$ or $N_M^K$.

This approach is inspired by and complements previous works. 
Exact couplings have been proved both  in the context of invasion of mutants in evolutionary biology and in epidemiology, where the models lead to a resident population with a hyperbolic or a partially hyperbolic equilibrium.
  For the SIR model in continuous time, we  refer in particular to
 Ball and Donnelly \cite{BallDon} for  a proof of  exact coupling  before the population reaches the order of magnitude $\sqrt{K}$.   Barbour and al \cite{BHKK}  prove that exact coupling holds before the mutant population reaches order 
 {$K^{7/12}$} for multidimensional models in evolutionary biology. 
 Complementary results have been obtained for epidemiological models in \cite{BarbUtev, BR}. Our result allows to compare the original mutant process and the branching process until the mutant population reaches the order $K/(\log K)^a$.\\

When the  population of mutants  becomes large but is not yet macroscopic, we prove (cf Section~\ref{qttdyn}) that the deterministic approximation by the dynamical system is already valid in the following sense. Starting with a number of mutants  $1 \ll N^K_M(0)=K X^K_M(0) \ll K$,
	$$\underset{t \leq T^K_M(N^K(0),vK)}{\sup}\bigg\vert \frac{X^K_M(t)}{ x_M^K(t)}-1\bigg\vert
\underset{K\rightarrow\infty}{\longrightarrow}0, \hbox{ in probability},$$
where $v>0$,  and 
$(x^K_M(t), t\ge 0)$ is the solution of~\eqref{def:dynsys} with the initial condition $1/K \ll x_M^K(0)=X^K_M(0) \ll 1$ and $x^K_R(0)$ close to~$x_R^\star$.   We refer to Theorem \ref{maindynamic} for a precise statement which gives also the conditions on the initial resident population size and the admissible values of $v$ and quantitative estimates for the convergence. The boundary of $(\R_+)^2$ is invariant  for the dynamical system but this domain is unstable in the neighborhood of $\R_+\times \{0\}$ in the   invasion regime. This makes the problem delicate when controlling the stochastic flow. 
In the time window $[0,T^K_M(N^K(0),vK)]$ whose size is of order of magnitude $\log K$,  $X^K$ goes from $X^K_M(0) \ll 1$ to $v$.  This requires to deal with a long time scale and escape an unstable equilibrium, as in \cite{BHKK}. We consider in this paper different scales and techniques, which allow us to capture the  expected renormalization $X_M^K(t)/x_M^K(t)$ from low density  until any reachable macroscopic value.
\\

 The simulations of Figure~\ref{graph2} illustrate the different approximations corresponding to the SIR example of Section~\ref{secSIR}, both by the branching process (from the initial time) and by the dynamical system (from hitting times of large levels).
\begin{figure}
\hspace*{-0.6cm}\includegraphics[scale=0.37]{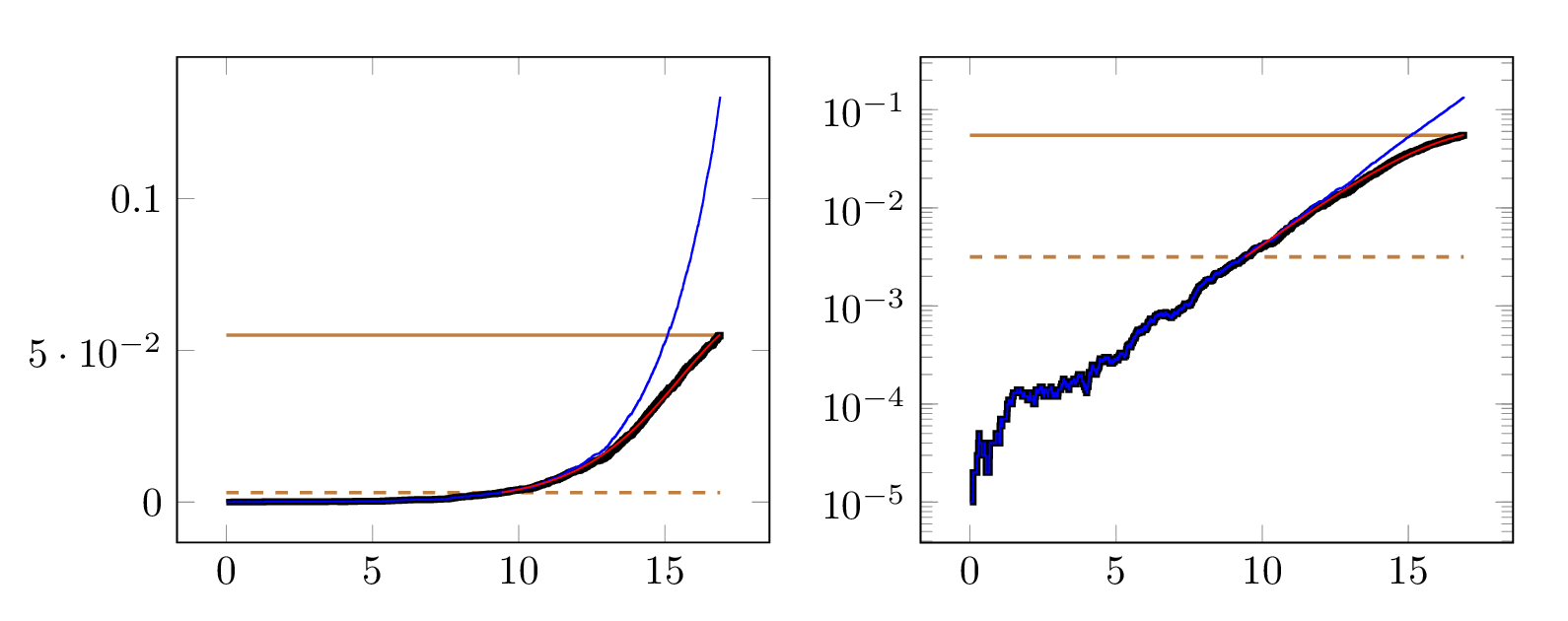}
\caption{\emph{The black curve represents a trajectory of the process $X^K_M(t)=N^K_M(t)/K$, the blue curve a trajectory of $Z(t)/K$, and the red curve is the deterministic function $x^K_M(t)$ starting at time $T^K_M(N^K(0),\xi^K)$. Both figures represent the simulations of the same trajectory in different scales (standard scale on the left, logarithmic scale on the right). These simulations have been done with the following parameters: $K=10^5$, $F_R(x_R,x_M)=-1.5 x_M$, $F_M(x_R,x_M)=1.5 x_R - 1$, $x_R^\star = 1$, $\xi_K=\sqrt{K}$ (cf the brown dashed line) and $v=0.055$ (cf the brown plain line).}}
\label{graph2}
\end{figure}
$\newline$

 As a byproduct,
we obtain in Theorem \ref{approxtps} the following convergence in law  of hitting times when $K$ tends to infinity, on the survival event of $N^K_M$. Under Condition \eqref{hitparade}, 
for $(\zeta_K)_K$ going to infinity such that $\zeta_K/K\rightarrow v \geq 0$, 
we show the following convergence in law
$$T_{M}^K(N^K(0),\zeta_K) - \frac{\log(\zeta_K)}{r_{\star}}\Longrightarrow \tau(v)-\frac{\log W^{\star}}{r_{\star}},$$
 where the function $\tau$ is  continuous, $\tau(0)=0$ and $W^{\star}$ is an exponential random variable with parameter~$r_{\star}/b_{\star}$.  We refer to Section \ref{defvstar} for the definition of~$\tau$ and 
 the characterization of admissible values $[0,v_{\star})$ of  the level $v$.
  This result is illustrated in  Figure~\ref{histo} corresponding to the SIR example of Section~\ref{secSIR}. Thus,  the hitting times of mesoscopic or macroscopic levels have  Gumbel laws translated by $(\log \zeta_K)/r_{\star}+\tau(v)$. We refer to \cite{BR,BHKK} for related results on the random time needed to see the deterministic  macroscopic curve.\\
In the situations when $K$ is very large, as for huge cohorts of microorganisms (bacteria, cells), $(\log \zeta_K)/r_{\star}$  gives a good approximation of the hitting times. But in many other cases, $K$ is not so large. We  refer to the modeling of hematopoiesis and leukemias \cite{bonnet1, bonnet2} where $K$ is the number of sain stem cells, of order $10^5$. This is the original motivation for this work {and one reason to quantify explicitly the convergence results}. The intermediate phase and hitting times are  indeed involved in the emergence and detection of cancer. 
 
\begin{center}
\begin{figure}
\includegraphics[scale=0.6]{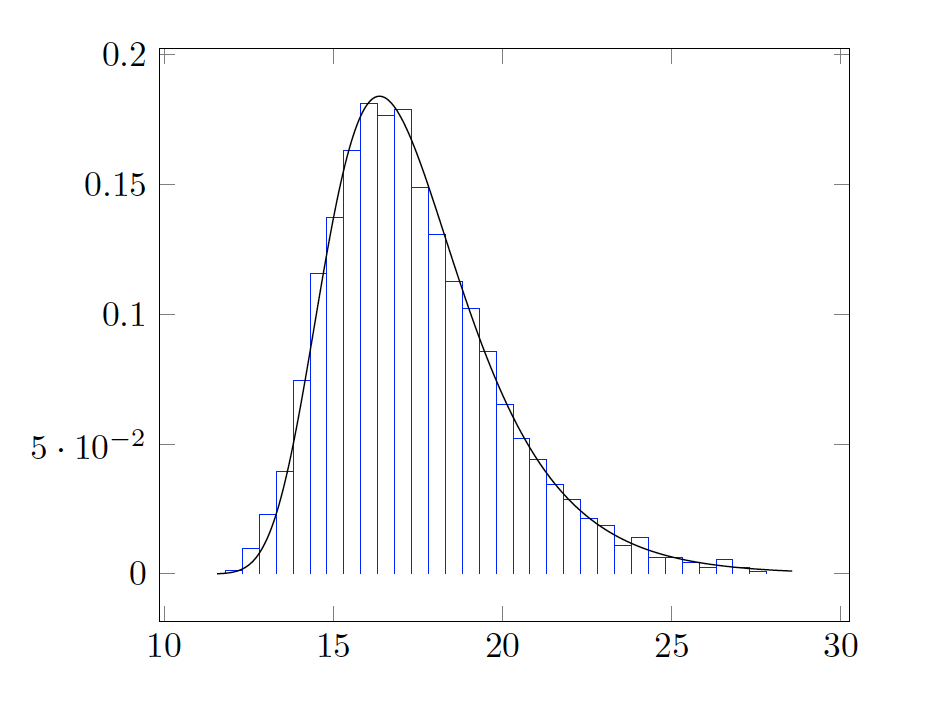}
\caption{\emph{ The histogram represents the values of $T_{M}^K(N^K(0),v K)$ (conditionally to the non-extinction of $N^K_M$). The simulations have been done with the following parameters: $K=100000$, $F_R(x_R,x_M)=-1.5 x_M$, $F_M(x_R,x_M)=1.5 x_R - 1$, $x_R^\star = 1$ and $v=0.055$. The size of the sample to draw the histogram is 10000.
The curve is the density $f$ of the random variable
 ${\log(vK)}/{r_{\star}} + \tau(v)-{\log(W^{\star})}/{r_{\star}}$,
 where the density of $-{\log(W^{\star})}/{r_{\star}}$ is equal to ${r_\star^2}/{b_\star}\exp(-r_\star t)\exp(- {r_\star}/{b_\star}e^{-r_\star t}).$
 }}
 \label{histo}
\end{figure}
\end{center}

\paragraph*{Assumptions.}

We consider the following assumptions on the birth and death rates. They ensure in particular that the resident population alone has a positive equilibrium point (which may be non stable) and that the mutant population can invade.  

\begin{hyp}\label{assumpt}$ $
	\begin{itemize}
		\item[$(R)$] Regularity: the functions $b_R,d_R,b_M,d_M$ are~$C^2(\mathbb{R}_{+} \times \mathbb{R}_{+}, \mathbb{R}_{+})$.
		\item[$(E)$] Equilibrium: there exists  $x^{\star}_R>0$ such that $F_R(x^{\star}_R,0)=0$ and  $\partial_R F_R(x^{\star}_R,0)\leq 0.$
		\item[$(I)$] Invasion: $F_M(x^{\star}_R,0)>0$. 
	\end{itemize}
\end{hyp}


\bigskip
Under  Assumption~$(R)$, the bi-type birth and death process $X^K$ is well defined until its explosion time, which may be finite with positive probability. In our framework, the process is studied before hitting times of fixed levels, clearly smaller than the explosion time. 
 In the eco-evolutionary framework,  $F_M(x^{\star}_R,0)$ is called invasion fitness. It quantifies the individual growth rate for a small mutant population in a resident population at the equilibrium.\\ 

\paragraph*{Notation.}
\begin{itemize}
\item We write $x^{\star}=(x^{\star}_R,0).$
\item For a smooth function $f:(x_R,x_M)\in\r^2\mapsto f(x_R,x_M)$, we denote by $\partial_R f$ (resp. $\partial_M f$) the derivative w.r.t. the first (resp. second) variable.
\item If $\N(dx)$ is a Poisson point measure with intensity $\lambda(dx)$, we denote by $\widetilde\N(dx) := \N(dx) - \lambda(dx)$ the compensated measure.
\item Throughout the paper, $C$ denotes any positive constant depending only on the model parameters. The value of~$C$ can change from line to line. If a constant depends on some (non-model) parameter~$\theta$ we write~$C_\theta$.
\end{itemize}

\section{Approximation by branching processes}\label{sectionbranch}
In this section,  we assume that at the initial time, the resident population is close to its equilibrium and a single mutant individual appears.
\begin{hyp}\label{assumptCI} There exists $C>0$ such that for any $K>0$,
\begin{equation*}
\bigg\vert \frac{N^K_R(0)}{K} - x_R^{\star} \bigg\vert \leq \frac{C}{K} \qquad \textrm{ and } \qquad N^K_M(0) = 1 \qquad \text{a.s}.
\end{equation*}
\end{hyp}

We introduce the branching process $(Z(t) : {t\geq 0})$  with individual birth rate $b_{\star}= b_M(x^{\star})$ and (individual) death rate $d_{\star}=d_M(x^{\star})$, defined on the same probability space as  $N^K_M$ and coupled with $N^K_M$ in the following way:
\begin{align*}
Z(t)=&1 + \int_{[0,t]\times\r_+}\uno{u\leq b_{\star}\,Z(s-) }\, \N^b_M(ds,du)
-\int_{[0,t]\times\r_+}\uno{u\leq  d_{\star}\, Z(s-)}\, \N^d_M(ds,du),
\end{align*}
where  the Poisson point measures $\N^b_M$ and $\N^d_M$ have been introduced in \eqref{def:stochproc}. 

\medskip We recall from Assumption~\ref{assumpt}.$(I)$ that   $r_{\star}=b_{\star}- d_{\star}>0$ and the branching process $Z$ is supercritical.
The survival event has positive probability and as  $t$ tends to infinity, the martingale $W(t)=Z(t)\exp(-r_{\star}t)$ converges a.s. to a finite random variable $W$ which is positive on the survival event :
$$\{W>0\}=\{\forall t>0 : Z(t)>0\}.$$ 
The random variable $W$  is exponentially distributed with an additional atom at $0$ :
 $$W\stackrel{d}{=}\frac{d_{\star}}{b_{\star}} \delta_0+ {\frac{r_{\star}}{b_{\star}}\, \mu_{ r_{\star}/b_{\star}}},$$
 where $\delta_0$ is a Dirac mass at $0$ and $\mu_a(dx)=\un_{\mathbb R_+}(x)ae^{-ax}dx$ and $dx$ is the Lebesgue measure.
 We refer
 to 
 \cite{AN} Chapter 3
 or \cite{durrett}-Theorem 1 for such results.

\medskip Let us study the coupling between the processes $Z$ and $N^K_M$.
First, when the branching process $Z$ becomes extinct, 
so does the mutant process $N_M$ and we have the following result.
 Under Assumptions~\ref{assumpt} and \ref{assumptCI}, 
	$$\lim_{K\rightarrow \infty} \P(\forall t\geq 0: N_M^K(t)=Z(t) \vert W=0)=1.$$
Indeed, the two processes coincide by coupling on a time window whose size goes to infinity with probability $1$ as $K$ goes to infinity. Besides, $Z$ is a.s. absorbed at $0$ in finite time on the event $\{W=0\}$. Such arguments  are classical for epidemiological models and can be adapted to our context. We refer to \cite{BallDon}.\\

Let us turn our attention to the survival event and state 
the main result of the section. It compares 
the mutant process and its branching approximation as long as the process is not too close to the macroscopic scale. We  prove that the ratio  of the two processes converges  to $1$ in probability, with an explicit speed, before time $T^K_{M}(N^K(0),n)$  when the population size of mutants reaches a certain threshold $n$ (conveniently chosen), see~\eqref{temps-mut} for definition.

\begin{theorem}\label{mainbranch}
	Under Assumptions~\ref{assumpt} and \ref{assumptCI}, 
 
	i)  if $\partial_R F_R(x^{\star})<0$,  there exists $C>0$ such that 
 for any $\eta>0$ small enough, $K\geq 2$ and $n\leq K/(\log K)$,
we have 
\begin{align*}
	&\mathbb{P}\bigg(\underset{t\leq T^K_{M}(N^K(0),n)}{\sup}~ \left|\frac{N^K_M(t)}{ Z(t)}-1 \rr|> \eta\ ; \ W>0\bigg) \leq C\left(\frac{\sqrt{K\log K}}{n} + \frac{1}{K^{1/3}} + \frac1{\eta^{1/4}}\left(\frac{n\log K}{K}\right)^{1/10}\right),
\end{align*}
	
	ii)  if $\partial_R F_R(x^{\star})=0$,
  there exist $L,C>0$ such that
	for any $\eta>0$ small enough, $K\geq 2$ and $n\leq  L K/(\log K)^2$, we have
	\begin{align*}
	&\mathbb{P}\bigg(\underset{t\leq T^K_{M}(N^K(0),n)}{\sup}~\left|\frac{N^K_M(t)}{ Z(t)}-1 \rr|>\eta\ ; \ W>0\bigg)
\leq C\left(\sqrt{\frac{\log K}{n}} + \frac{1}{K^{1/3}} + \frac1{\eta^{1/4}}\left(\log K\sqrt{\frac{n}{K}}\right)^{1/10}\right).
	\end{align*}
\end{theorem}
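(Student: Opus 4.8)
The plan is to build an explicit coupling between $N^K_M$ and $Z$ driven by the common Poisson measures $\N^b_M,\N^d_M$, and to control the discrepancy process. The key observation, already flagged in the introduction, is that the ``excess'' individuals --- those counted by one process but not the other --- form a branching process with immigration whose immigration rate is governed by the current discrepancy in the birth/death rates, i.e.\ by $|b_M(X^K(s))-b_\star|$ and $|d_M(X^K(s))-d_\star|$. Since $b_M,d_M$ are $C^2$ by Assumption~\ref{assumpt}.$(R)$, these discrepancies are controlled by $|X^K_R(s)-x^\star_R| + |X^K_M(s)|$. The mutant density $X^K_M(s) = N^K_M(s)/K$ is at most $n/K$ before time $T^K_M(N^K(0),n)$, so the real work is to control $|X^K_R(s) - x^\star_R|$, i.e.\ to show that the resident population stays close to its equilibrium on the relevant time window (of length $O(\log K)$). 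This is precisely where the dichotomy between $\partial_R F_R(x^\star) < 0$ (hyperbolic, case i) and $\partial_R F_R(x^\star) = 0$ (partially hyperbolic, case ii) enters: in the hyperbolic case the resident population is attracted to $x^\star_R$ at an exponential rate, giving fluctuations of order $\sqrt{\log K / K}$ sustained over time $\log K$; in the degenerate case the relaxation is only polynomial, so fluctuations accumulate and one only controls them up to $LK/(\log K)^2$, with an extra $\log K$ factor.

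The steps I would carry out, in order. \emph{Step 1: resident deviation estimate.} Write the SDE for $X^K_R$, decompose into drift plus a martingale part (the compensated Poisson integrals), and use a Gronwall/comparison argument exploiting $F_R(x^\star_R,0)=0$ and $\partial_R F_R(x^\star)\le 0$ to bound $\sup_{t\le T^K_M(N^K(0),n)} |X^K_R(t)-x^\star_R|$. The martingale increments have quadratic variation of order $1/K$ per unit time (rates are $O(K)$, jumps $O(1/K)$), so over time $O(\log K)$ Doob/BDG gives fluctuations $O(\sqrt{\log K / K})$ in case i) and, after integrating the polynomially-decaying drift, $O(\log K \sqrt{n/K})$-type terms in case ii). One must also absorb the feedback of $X^K_M \le n/K$ into $F_R$, which contributes the $n\log K/K$ factor in case i). \emph{Step 2: coupling / excess-process estimate.} Let $D(t)$ count the individuals present in exactly one of $N^K_M$ and $Z$. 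Using the explicit Poisson-driven representations of both processes with the \emph{same} driving measures, $D$ is dominated by a branching process (rates $\le (b_\star\vee d_\star)$ per individual) with immigration at rate $\le C(|X^K_R(s)-x^\star_R| + n/K)\cdot(N^K_M(s)\vee Z(s))$. Bounding $N^K_M(s)\vee Z(s) \le C W e^{r_\star s}$ on $\{W>0\}$ up to the hitting time (using the branching-process equivalence stated in the introduction, or a direct submartingale bound), and integrating, one gets $\E[D(T^K_M(N^K(0),n))] \le C n \cdot (\text{resident deviation bound})$. \emph{Step 3: from $D$ to the ratio.} On $\{W>0\}$, $Z(t) \ge c\, W e^{r_\star t}$ eventually, and $|N^K_M(t)/Z(t) - 1| \le D(t)/Z(t)$; combine Step~2 with a maximal inequality for the ratio (the ratio process is close to a martingale perturbed by the immigration) and Markov's inequality. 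The terms $K^{-1/3}$ absorb the small-$t$ regime where $Z$ has not yet grown and the exponential variable $W$ may be atypically small (one intersects with $\{W \ge K^{-1/3}\}$ or similar and handles the complement separately); the powers $1/10$, $1/4$ come from optimizing a Hölder split between ``$D$ is small'' and ``$Z$ is large''.

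\emph{Main obstacle.} The delicate point is Step~1 in case ii): when $\partial_R F_R(x^\star)=0$ there is no exponential return to equilibrium, so a naive Gronwall bound loses control over a time window of length $\log K$. One must use the finer structure of $F_R$ near $x^\star$ --- e.g.\ a second-order expansion and the fact that the quadratic term still pushes toward the equilibrium from the relevant side --- to get a sub-exponential but still summable relaxation, which is why the admissible threshold degrades to $LK/(\log K)^2$ and why the constant $L$ (controlling how far the second-order Taylor expansion is valid) appears. A secondary subtlety is making the ``excess = branching with immigration'' domination rigorous while the rates $b_M(X^K(s)), d_M(X^K(s))$ fluctuate: this requires a pathwise comparison of Poisson-driven SDEs, valid because all indicator thresholds in \eqref{def:stochproc} and the definition of $Z$ are monotone in the state, so a coupling on the same $\N^b_M,\N^d_M$ preserves order.
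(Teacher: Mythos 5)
Your proposal follows essentially the same route as the paper: a pathwise coupling on the common Poisson measures $\N^b_M,\N^d_M$, a domination of the excess (symmetric-difference) processes by a branching process with immigration whose rate is governed by $|b_M(X^K)-b_\star|+|d_M(X^K)-d_\star|$, a control of the resident deviation $|X^K_R - x^\star_R|$, and finally passage from the absolute gap to the ratio by dividing by $Z(t)=W(t)e^{r_\star t}$ and bounding $\inf_t W(t)$ from below on the survival event (this is where the exponent $1/4$ in $\eta^{-1/4}$ and the $K^{-1/3}$ term come from, via the time truncation $s_K=\tfrac43\log K/r_\star$). Two small but worth-flagging differences.

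First, in case ii) you propose to exploit that ``the quadratic term still pushes toward the equilibrium from the relevant side.'' This is not an available hypothesis: Assumption~\ref{assumpt} only imposes $\partial_R F_R(x^\star)\le 0$, with no sign condition on $\partial_R^2 F_R$, so you cannot rely on any return to equilibrium once the linear term vanishes. What actually works, and what the paper does, is more elementary: you forgo any stability gain entirely, write $Y^K_R = X^K_R-x^\star_R$ as initial gap plus martingale $M^K$ plus drift terms $H^K_1$ (proportional to $X^K_M$) and $H^K_2$ (higher order in $Y^K_R,X^K_M$), and bound $\sup_{t\le T^K}|Y^K_R|$ by a plain triangle inequality. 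The martingale gives $O(\sqrt{\log K/K})$ over the $\log K$ window; the drift contributions accumulate to $O(n\log K/K)$. Tolerating a resident deviation of size $\sqrt{n/K}$ (instead of $Ln/K$ as in case i) and restricting $n\le LK/(\log K)^2$ is exactly what makes the drift accumulation at most $\tfrac12\sqrt{n/K}$ and hence negligible; the admissible threshold degrades for this bookkeeping reason, not because of a quadratic stabilizing effect. Second, in case i) the relevant tool is not a vanilla Gronwall bound but a block decomposition (Lemma~\ref{lemmeadrien}): one controls $\sup_{s}|H^K(s)-H^K(\lfloor s\rfloor)|$ rather than $\sup_s|H^K(s)|$, exploiting the linear contraction $\partial_R G_R(x^\star)<0$ to reset the accumulation on unit intervals and avoid the $\log K$ loss. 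This distinction is what makes the hyperbolic case sharper and lets the threshold reach $K/\log K$.
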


\medskip
{We observe that $n$ needs also to be larger that $\sqrt{K\log(K)}$ in $(i)$
(and larger than $\log(K)$ in $(ii)$) so that the right hand side tends to $0$. The choice of  $n$ 
allows  to optimize the bounds and  cover the time when the number  of mutants is below $K/\log K$ in (i) (and $LK/(\log K)^2$ in (ii)).
Let us state the immediate corollary which will be used in Section \ref{hittingtimes}.}

\begin{corol}
\label{mainbranch1}
If $\partial_R F_R(x^{\star})<0$ and $(\xi_K)_K$  satisfies
$1 \ll \xi_K\ll  K/(\log K)$, then
\begin{align*}
	\lim_{K\rightarrow \infty} &\mathbb{P}\bigg(\underset{t\leq T^K_{M}(N^K(0),\xi_K)}{\sup}~ \left|\frac{N^K_M(t)}{ Z(t)}-1 \rr|> \eta\ ; \ W>0\bigg)=0.
\end{align*}
If $\partial_R F_R(x^{\star})=0$, a similar result holds for sequences $(\xi_K)_K$ such that $1\ll\xi_K\ll K/(\log K)^2$.
\end{corol}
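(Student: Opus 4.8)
The plan is to deduce Corollary~\ref{mainbranch1} directly from Theorem~\ref{mainbranch} by choosing, for each admissible sequence $(\xi_K)_K$, a threshold $n=n_K$ that dominates $\xi_K$ but still satisfies the hypotheses of the theorem, and then showing that the right-hand side of the theorem's bound vanishes for that choice.

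\medskip
\textbf{Step 1: monotonicity in the threshold.} First I would observe that, for a fixed trajectory, the supremum $\sup_{t\leq T^K_M(N^K(0),n)}\bigl|N^K_M(t)/Z(t)-1\bigr|$ is nondecreasing in $n$, since enlarging $n$ can only enlarge the hitting time $T^K_M(N^K(0),n)$ and hence the time interval over which the supremum is taken (on the event $\{W>0\}$ the process $N^K_M$ reaches arbitrarily large levels, so all these hitting times are finite). Consequently, if $\xi_K\leq n_K$, then
\[
\mathbb{P}\bigg(\underset{t\leq T^K_{M}(N^K(0),\xi_K)}{\sup}~ \left|\frac{N^K_M(t)}{ Z(t)}-1 \rr|> \eta\ ; \ W>0\bigg)
\leq
\mathbb{P}\bigg(\underset{t\leq T^K_{M}(N^K(0),n_K)}{\sup}~ \left|\frac{N^K_M(t)}{ Z(t)}-1 \rr|> \eta\ ; \ W>0\bigg).
\]
This reduces the problem to exhibiting a suitable $n_K$.

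\medskip
\textbf{Step 2: choosing $n_K$ in case $\partial_R F_R(x^\star)<0$.} Here I need $n_K$ with $\xi_K\leq n_K\leq K/(\log K)$ for $K$ large, and with the bound
\[
C\left(\frac{\sqrt{K\log K}}{n_K} + \frac{1}{K^{1/3}} + \frac1{\eta^{1/4}}\left(\frac{n_K\log K}{K}\right)^{1/10}\right)\to 0.
\]
The first term forces $n_K\gg\sqrt{K\log K}$ and the third forces $n_K\ll K/\log K$; since we are given $\xi_K\ll K/(\log K)$, I can pick for instance $n_K=\max\bigl(\xi_K,\lceil K/(\log K)^{3/2}\rceil\bigr)$: this always satisfies $n_K\geq K/(\log K)^{3/2}\gg\sqrt{K\log K}$, while $n_K=o(K/\log K)$ because both $\xi_K$ and $K/(\log K)^{3/2}$ are $o(K/\log K)$, and $n_K\leq K/(\log K)$ for $K$ large so the theorem applies. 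Then each of the three terms tends to $0$ (the middle one trivially), and combining with Step~1 gives the claim. The case $\partial_R F_R(x^\star)=0$ is identical: now I need $\sqrt{\log K/n_K}\to0$, i.e. $n_K\gg\log K$, and $\log K\sqrt{n_K/K}\to0$, i.e. $n_K\ll K/(\log K)^2$, together with $n_K\leq LK/(\log K)^2$; since $\xi_K\ll K/(\log K)^2$, the choice $n_K=\max\bigl(\xi_K,\lceil K/(\log K)^{3}\rceil\bigr)$ works (and is $\leq LK/(\log K)^2$ for $K$ large), so the bound in $(ii)$ vanishes.

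\medskip
\textbf{Main obstacle.} There is essentially no analytic difficulty here — it is a routine ``take the right subsequence of thresholds'' argument — so the only point requiring care is the monotonicity/finiteness issue in Step~1: one must make sure that on $\{W>0\}$ all the hitting times $T^K_M(N^K(0),n)$ are almost surely finite (so that the inclusion of events is valid and $\xi_K\leq n_K$ genuinely gives a larger time window), and that the event being measured is indeed monotone in $n$. Once that is settled, the result follows immediately from Theorem~\ref{mainbranch} by the elementary asymptotics above.
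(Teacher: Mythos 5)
Your proof is correct and takes essentially the same approach the paper has in mind: the remark preceding the corollary already points out that one should apply Theorem~\ref{mainbranch} with an $n$ larger than $\sqrt{K\log K}$ (resp.\ $\log K$), and the paper then labels the corollary ``immediate.'' You simply make explicit the two steps the paper leaves to the reader: the monotonicity of the event in $n$, and the concrete choice $n_K=\max(\xi_K,\lceil K/(\log K)^{3/2}\rceil)$ (resp.\ $\max(\xi_K,\lceil K/(\log K)^3\rceil)$) that sits in the right window and makes every term of the bound vanish. The arithmetic checks out in both cases. One small remark: the parenthetical justification in Step~1 — that on $\{W>0\}$ the hitting times $T^K_M(N^K(0),n)$ are a.s.\ finite because $N^K_M$ reaches all levels — is not actually needed and is not quite right as stated (on $\{W>0\}$ the process $Z$ diverges, but $N^K_M$ could in principle go extinct for a fixed $K$). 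The monotonicity $T^K_M(N^K(0),\xi_K)\le T^K_M(N^K(0),n_K)$ holds in $[0,\infty]$ regardless, so the event inclusion is valid without any finiteness claim; you can safely drop that sentence.
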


We refer to Section \ref{hittingtimes} for the asymptotic behavior of $T^K_M(N^K(0),\xi_K)$.

\medskip
 The statement depends on the value of $\partial_R F_R(x^{\star})$.   Indeed, the proof  is  based on a comparison between $T^K_{M}(N^K(0),n)$ and the first time where the resident population exits a neighborhood  of its equilibrium. If $\partial_R F_R(x^{\star})<0$, the  manifold $\{x_M=0\}$
 is stable around the fixed point $(x_R^{\star},0)$. This stability helps for the control of the resident population
 and guarantees that the distance of the  resident population process to its equilibrium  remains smaller than $n$ (up to some well chosen factor) until time $T^K_M(N^K(0),\xi_K)$. If  $\partial_R F_R(x^{\star})=0$, exponential (local) stability of residents is lost and we can (only) prove that the distance is smaller than $\sqrt{K n}$. We refer to Lemma~\ref{controlRbranch} for details.

\medskip The proof of Theorem~\ref{mainbranch} will be obtained in several steps that we develop below. We first introduce
a  coupling to compare $N^K_{M}$ and $Z$.  Let us introduce a decomposition involving the processes   realizing  the coupling and the additional part. The two subpopulations will be indexed respectively by $q$ and $r$.  \\
We introduce
$$m^K(s)=\min(N^K_{M,q}(s) b_M(X^K(s)),Z^K_q(s) b_{\star}), \quad p^K(s)=\max(N^K_{M,q}(s) d_M(X^K(s)),Z^K_q(s) d_{\star}),$$
where $(N^{K}_{M,q},N^{K}_{M,r})$ and $(Z^K_q,Z^K_r)$ are defined as follows:
\Bea
N^K_{M,q}(t)&=&1+\int_0^t\int_{\R_+}  \uno{u\leq m^K(s-)}\, \mathcal N^b_{M}(ds,du)-\int_0^t  \int_{\R_+} \uno{{0< u \leq p^K(s-)}} \,\mathcal N^d_{M}(ds,du)\\
N^K_{M,r}(t)&=&\int_0^t\int_{\R_+}  \uno{ m^K(s-)<u\leq N^K_{M,q}(s-) b_M(X^K(s-))}\, \mathcal N^b_{M}(ds,du)\\
&&\quad +\int_0^t  \int_{\R_+} \uno{{N^K_{M,q}(s-)d_M(X^K(s-)) < u \leq p^K(s-)}} \,\mathcal N^d_{M}(ds,du)\\
&&\quad +\int_0^t\int_{\R_+}  \uno{0\leq u- N^K_{M,q}(s-) b_M(X^K(s-))\leq N^K_{M,r}(s-) b_M(X^K(s-))}\, \mathcal N^b_{M}(ds,du)\\
&&\quad -\int_0^t  \int_{\R_+} \uno{{0\leq u - N^K_{M,q}(s-)d_M(X^K(s-))\leq N^K_{M,r}(s-) d_M(X^K(s-))}} \,\mathcal N^d_{M}(ds,du)
\Eea
and in a similar way,
\Bea
Z_q^K(t)&=&N^K_{M,q}(t)\\
Z_{r}^K(t)&=&\int_0^t\int_{\R_+}  \uno{ m^K(s-)<u\leq Z_q^K(s-) b_{\star}}\, \mathcal N^b_{M}(ds,du)\\
&&\quad +\int_0^t  \int_{\R_+} \uno{{Z^K_q(s-)d_\star < u  \leq p^K(s-)}} \,\mathcal N^d_{M}(ds,du)\\
&&\quad +\int_0^t\int_{\R_+}  \uno{0\leq u- Z_q^K(s-) b_{\star} \leq Z_{r}^K(s-) b_{\star}}\, \mathcal N^b_{M}(ds,du)\\
&&\quad -\int_0^t  \int_{\R_+} \uno{{0\leq u - Z^K_q(s-)d_\star\leq Z_{r}^K(s-) d_{\star}}} \,\mathcal N^d_{M}(ds,du).
\Eea
We can easily check that
\begin{lemme}
	\label{couplage} The following equalities of processes hold, almost surely: 
 $$N^K_M=N^K_{M,q}+N^K_{M,r}, \qquad Z=Z_q^K+Z_r^K.$$
\end{lemme}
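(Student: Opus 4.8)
The plan is to verify Lemma~\ref{couplage} directly from the explicit integral definitions of the four processes $N^K_{M,q}, N^K_{M,r}, Z^K_q, Z^K_r$, checking that the sums $N^K_{M,q}+N^K_{M,r}$ and $Z^K_q+Z^K_r$ solve the very same stochastic differential equations as $N^K_M$ and $Z$ respectively, and then invoking pathwise uniqueness for these equations (which holds because the jump rates are bounded on any finite population set and the processes are pure jump with locally finite activity, so the solutions are built jump by jump). Since both sides start from the same initial value ($1+0=1$ for the mutant, $N^K_{M,q}(0)=1$ plus $Z^K_r(0)=0$ for $Z$), equality of the SDEs forces equality of the paths almost surely.

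Concretely, I would first add the two integrals defining $N^K_{M,q}$ and $N^K_{M,r}$ and argue that, for the birth part, the indicator sets stack up consecutively along the $u$-axis: $\uno{u\le m^K(s-)}$ from $N^K_{M,q}$, then $\uno{m^K(s-)<u\le N^K_{M,q}(s-)b_M(X^K(s-))}$ and $\uno{0\le u-N^K_{M,q}(s-)b_M(X^K(s-))\le N^K_{M,r}(s-)b_M(X^K(s-))}$ from $N^K_{M,r}$. Their union is exactly $\{0<u\le (N^K_{M,q}(s-)+N^K_{M,r}(s-))b_M(X^K(s-))\}$, and since $m^K(s-)\le N^K_{M,q}(s-)b_M(X^K(s-))$ by definition of the minimum, there is no overlap; hence the birth increments of the sum agree with those of $N^K_M$ driven by $\mathcal N^b_M$ with threshold $N^K_M(s-)b_M(X^K(s-))$. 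The death part is handled symmetrically: $\uno{0<u\le p^K(s-)}$ from $N^K_{M,q}$ combines with the two death indicators in $N^K_{M,r}$ (one on $(N^K_{M,q}(s-)d_M(X^K(s-)),p^K(s-)]$, one a shifted block of length $N^K_{M,r}(s-)d_M(X^K(s-))$), using $p^K(s-)\ge N^K_{M,q}(s-)d_M(X^K(s-))$ from the maximum, to tile $\{0<u\le (N^K_{M,q}(s-)+N^K_{M,r}(s-))d_M(X^K(s-))\}$ exactly once. The identical bookkeeping with $b_\star,d_\star$ in place of $b_M(X^K),d_M(X^K)$ gives $Z^K_q+Z^K_r$ solving the $Z$-equation.

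The one point deserving a little care — and the place where the argument could go wrong if the construction were malformed — is that the shifted indicator blocks in $N^K_{M,r}$ (and $Z^K_r$), namely those involving $u - N^K_{M,q}(s-)b_M(X^K(s-))$, must sit immediately above the already-used region $\{0<u\le N^K_{M,q}(s-)b_M(X^K(s-))\}$ without leaving a gap and without re-counting the intermediate strip $(m^K(s-),N^K_{M,q}(s-)b_M(X^K(s-))]$. I would therefore spell out, for the birth term, that on $\{m^K(s-)<u\}$ the union of $\{u\le N^K_{M,q}(s-)b_M(X^K(s-))\}$ and $\{0\le u-N^K_{M,q}(s-)b_M(X^K(s-))\le N^K_{M,r}(s-)b_M(X^K(s-))\}$ is precisely $\{m^K(s-)<u\le (N^K_{M,q}(s-)+N^K_{M,r}(s-))b_M(X^K(s-))\}$, which when joined with the $q$-block $\{0<u\le m^K(s-)\}$ yields the full interval. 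I do not expect any genuine obstacle here; it is a deterministic check on interval partitions carried out $\omega$ by $\omega$ and jump by jump, so "we can easily check" is accurate, and the formal justification is the jump-by-jump construction of solutions to these finite-activity pure-jump SDEs together with their pathwise uniqueness.
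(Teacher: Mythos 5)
Your overall strategy is exactly the paper's: show that $N^K_{M,q}+N^K_{M,r}$ (resp. $Z^K_q+Z^K_r$) satisfies the same SDE as $N^K_M$ (resp. $Z$) and invoke pathwise uniqueness for these finite-activity pure-jump equations. The paper's proof is a single sentence asserting this; you additionally carry out the birth-term bookkeeping, which is correct.

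However, your account of the death term is wrong as stated. You claim the three death intervals -- $(0,p^K(s-)]$ from $N^K_{M,q}$, $(N^K_{M,q}(s-)d_M(X^K(s-)),\,p^K(s-)]$ from $N^K_{M,r}$, and the shifted block of length $N^K_{M,r}(s-)d_M(X^K(s-))$ -- ``tile $\{0<u\le (N^K_{M,q}(s-)+N^K_{M,r}(s-))d_M(X^K(s-))\}$ exactly once,'' by symmetry with the birth side. But these sets overlap: $(0,p^K(s-)]$ already contains $(N^K_{M,q}(s-)d_M(X^K(s-)),\,p^K(s-)]$, and also part of the shifted block. The death side is \emph{not} symmetric to the birth side, precisely because $p^K=\max$ and $m^K=\min$: whereas $m^K(s)\le N^K_{M,q}(s)b_M(X^K(s))$ makes the $q$-process under-birth (and the missing births are credited consecutively to the residual), $p^K(s)\ge N^K_{M,q}(s)d_M(X^K(s))$ makes it over-die. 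The key point you have missed is that the first death indicator in the definition of $N^K_{M,r}$ enters with a \emph{plus} sign (it increases $N^K_{M,r}$), so on $(N^K_{M,q}(s-)d_M(X^K(s-)),\,p^K(s-)]$ the $-1$ in $N^K_{M,q}$ is cancelled by a $+1$ in $N^K_{M,r}$. After this cancellation, the net death contribution to the sum comes only from $(0, N^K_{M,q}(s-)d_M(X^K(s-))]$ (through $N^K_{M,q}$) and from the shifted block of length $N^K_{M,r}(s-)d_M(X^K(s-))$ (through the second death indicator of $N^K_{M,r}$), and \emph{these two} tile $(0, (N^K_{M,q}(s-)+N^K_{M,r}(s-))d_M(X^K(s-))]$, matching $N^K_M$. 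The conclusion you reach is correct and the method is the paper's, but the death-side mechanism is cancellation plus tiling, not tiling alone; noticing this asymmetry is also what lets one later dominate $N^K_{M,r}$ by a branching process with immigration.
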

\begin{proof} That  results from  the fact that  $N^K_{M,q}+N^K_{M,r}$ and   $N^K_M$ are solutions of the same stochastic differential equation for which pathwise uniqueness holds.  A similar argument also holds for $Z$.
\end{proof}

\bigskip Let us introduce 
the time when the gap to the equilibrium for resident population 
goes beyond some level $n\geq 0$ starting from the initial condition $N^K(0)=n_0\in \mathbb{N}^2$:
$$T^K_{R}(n_0, n)=\inf\left\{t>0~ : ~|N^K_R(t) - x^{\star}_RK|> n\rr\}=\inf\left\{t>0~: ~|X^K_R(t) - x^{\star}_R|> n/K\rr\}.$$
We consider  the (potentially random) initial population size $N^K(0)$ and  introduce the stopping time 
\begin{align}
\label{stoppingtime}
T^K_{R,M}(m,n) &= T^K_{M}(N^K(0),m) \wedge T^K_{R}(N^K(0),n) \wedge \frac{4 \log K}{3r_\star}.
\end{align}
The residual processes $N_r^K,Z_r^K$  will be compared to an inhomogeneous branching process and proved to be negligible until $T^K_{R,M}(m,n)$, with $m$ and $n$ suitably chosen.  
\begin{rque} This stopping time considers the first time when the population of mutants goes beyond $m$ or the population of residents has moved of $n$ from its equilibrium value. It is also truncated at a deterministic value of order $\log K$. 
For this truncation, we can consider any time $\lambda \,\log K/r_\star$ (with $\lambda>1$). We choose $\lambda := 4/3$ in definition \eqref{stoppingtime} to optimize the convergence speed of Theorem~\ref{mainbranch}. This particular choice only matters in the proof of Lemma~\ref{controlRbranch}.
\end{rque}

\medskip

\begin{lemme}\label{lemme26}
Under Assumptions~\ref{assumpt} and \ref{assumptCI}, 
\begin{itemize}
	\item[i)] For any~$L>0$, there exists $C_L>0$
 such that for any $n \leq K/(\log K)$, 
	\label{reste}
	$$\esp{\underset{t\leq T^K_{R,M}(Ln,n)}{\sup} (Z^K_r(t)+N^K_{M,r}(t))e^{-r_\star t}} 
	 \leq C_L\left(\frac{n\log K }{K}\right)^{1/2}\exp\left(C_L \frac{n\log K}{K}\right).$$
	
	
	\item[ii)] There exists $C> 0$ such that for any 
	$ n \leq K/(\log K)^2$,
	$$\esp{\underset{t\leq T^K_{R,M}({\sqrt{Kn}},n)}{\sup} (Z^K_r(t)+N^K_{M,r}(t))e^{-r_\star t}} 
\leq C\left(\log K\sqrt{\frac{n}{K}}\right)^{1/2} \exp\left(C \log K\sqrt{\frac{n}{K}}\right).$$
\end{itemize}
\end{lemme}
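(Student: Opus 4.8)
The plan is to control the residual processes $N^K_{M,r}$ and $Z^K_r$ by comparing them, up to time $T^K_{R,M}(\cdot,n)$, with a single inhomogeneous branching process with immigration, whose immigration intensity is driven by the size of the ``main'' (coupled) part and by the discrepancy between the mutant rates and the branching rates. First I would set $D^K(t) := Z^K_r(t)+N^K_{M,r}(t)$ and write down the SDE satisfied by $D^K$ from the definitions. Each of the four residual integral terms is of one of two kinds: (a) a ``birth/death from the residual population itself'' term, with rate bounded by $C\, D^K(s-)$ — this produces a genuine branching structure for $D^K$ with growth rate at most $\max(b_M,b_\star)$ on the relevant compact set; and (b) an ``immigration'' term coming from the mismatch $|N^K_{M,q}(s-)b_M(X^K(s-)) - Z^K_q(s-)b_\star|$ and the analogous death mismatch. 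The crucial observation is that, before $T^K_{R,M}(m,n)$, one has $N^K_{M,q}=Z^K_q$, so the mismatch reduces to $N^K_{M,q}(s-)\,|b_M(X^K(s-))-b_\star|$ (plus the death analogue), and by Assumption~\ref{assumpt}$(R)$ (the $C^2$, hence locally Lipschitz, regularity of the rates) together with the initial condition Assumption~\ref{assumptCI} and the definition of $T^K_R$, we get $|b_M(X^K(s-))-b_\star| \le C\,(|X^K_R(s-)-x_R^\star| + X^K_M(s-)) \le C\,(n/K + m/K)$ on $[0,T^K_{R,M}(m,n)]$. Thus the immigration intensity at time $s$ is at most $C\,\frac{m+n}{K}\,N^K_{M,q}(s-) \le C\,\frac{m+n}{K}\,Z(s-)$, and since $Z(s)e^{-r_\star s}$ is an $L^2$-bounded martingale (supercritical binary branching, finite second moment), $\E[\sup_{s\le t}Z(s)e^{-r_\star s}]$ is bounded uniformly.

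The second step is a Gronwall-type estimate for $\varphi(t) := \E\big[\sup_{s\le t\wedge T^K_{R,M}(m,n)} D^K(s)e^{-r_\star s}\big]$. Taking absolute values in the SDE for $D^K(s)e^{-r_\star s}$, using the Burkholder--Davis--Gundy inequality on the compensated martingale parts and the elementary bound $\langle \cdot\rangle_t \le \int_0^t (\text{rate})\,ds$ for the Poisson stochastic integrals, the branching-type (a) terms contribute $C\int_0^t \varphi(s)\,ds$ (after handling the quadratic-variation term by BDG and an $\sqrt{\ }$-versus-linear comparison that costs a constant), while the immigration (b) terms contribute a forcing term. Here one must be slightly careful: the jump part of $D^K$ has predictable quadratic variation whose drift is $\le C(D^K(s-) + \frac{m+n}{K}Z(s-))$, so BDG gives a term of the form $\E\big[(\int_0^t C\,D^K e^{-2r_\star s}\,ds + \int_0^t C\frac{m+n}{K}Ze^{-2r_\star s}\,ds)^{1/2}\big]$; the first piece is bounded by $\tfrac12\varphi(t) + C\int_0^t\varphi(s)\,ds$ by Young's inequality and Fubini, the second by $C(\frac{m+n}{K})^{1/2}$ using again the martingale bound on $Z$. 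The deterministic-drift immigration part directly gives $\int_0^t C\frac{m+n}{K}\E[Z(s)e^{-r_\star s}]\,ds \le C\frac{m+n}{K}\cdot t$. Since everything lives on $[0,\tfrac{4\log K}{3r_\star}]$, we may replace $t$ by $C\log K$. Collecting terms and absorbing the $\tfrac12\varphi(t)$ on the left yields
$$\varphi(t) \le C\,\frac{(m+n)\log K}{K} + C\int_0^t \varphi(s)\,ds,$$
and Gronwall's lemma gives $\varphi\big(\tfrac{4\log K}{3r_\star}\big) \le C\,\frac{(m+n)\log K}{K}\,e^{C\log K}$, which is already too large — so this crude bound on the forcing term is not good enough, and one instead needs the $\sqrt{\ }$ coming from the martingale part to be the dominant contribution.

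The resolution — and the point I expect to be the main obstacle — is that the forcing term must be kept in the form $C\,(\frac{(m+n)\log K}{K})^{1/2}$ (from the BDG/martingale estimate on the immigration noise) rather than $C\frac{(m+n)\log K}{K}$ (from its compensator drift): the immigration is a pure-jump increasing process of total compensator $O(\frac{(m+n)\log K}{K})$, so its supremum, being dominated by a martingale plus its compensator, is controlled in $L^1$ by the \emph{square root} of the compensator (for the martingale part) plus the compensator itself, and since $\frac{(m+n)\log K}{K}\le 1$ the square root dominates. Once the forcing is $C(\frac{(m+n)\log K}{K})^{1/2}$, Gronwall yields $\varphi(\tfrac{4\log K}{3r_\star}) \le C(\frac{(m+n)\log K}{K})^{1/2}\exp(C\frac{(m+n)\log K}{K})$ — wait, that exponent is still $C\log K$; so in fact the Gronwall constant in front of $\int_0^t\varphi\,ds$ must itself be made $o(1/\log K)$, which it is not in general. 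The correct device, therefore, is to \emph{not} iterate the branching growth through Gronwall on $[0,C\log K]$ but to note that the process $D^K(t)e^{-r_\star t}$ is itself (bounded above by) a nonnegative supermartingale-plus-immigration whose expected increments are summable: writing the Doob decomposition of $D^K e^{-r_\star t}$, its drift is $\le C\frac{m+n}{K}Z(s)e^{-r_\star s}$ (the branching part of $D^K$ has \emph{nonpositive} renormalized drift once we subtract $r_\star$, since the residual subpopulation has growth rate $\le r_\star + C\frac{m+n}{K}$, and the extra $C\frac{m+n}{K}D^K$ is reabsorbed), so $\E[D^K(t)e^{-r_\star t}] \le C\frac{m+n}{K}\int_0^t\E[Z e^{-r_\star s}]ds \le C\frac{(m+n)\log K}{K}$ directly, and then Doob's $L^1$ maximal-type inequality (via BDG on the martingale part whose quadratic variation compensator is $\le C\int_0^t(D^K+\frac{m+n}{K}Z)e^{-2r_\star s}ds$, of expectation $\le C\frac{(m+n)\log K}{K}$) upgrades this to the supremum with the square root: $\varphi(t)\le C(\frac{(m+n)\log K}{K})^{1/2}$. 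Finally I would plug in $m = Ln$ for part (i) to get $C_L(\frac{n\log K}{K})^{1/2}\exp(C_L\frac{n\log K}{K})$ (keeping the harmless exponential factor that arises if one prefers the Gronwall route with the improved constant), and $m=\sqrt{Kn}$ for part (ii), where $\frac{m+n}{K} = \frac{\sqrt{Kn}+n}{K}\le C\sqrt{n/K}$ (using $n\le K/(\log K)^2 \le K$), giving the stated bound $C(\log K\sqrt{n/K})^{1/2}\exp(C\log K\sqrt{n/K})$; the constraints $n\le K/(\log K)$ and $n\le K/(\log K)^2$ respectively are exactly what is needed to ensure $\frac{(m+n)\log K}{K}$ stays bounded so the exponential factor is $O(1)$.
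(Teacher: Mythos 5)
Your proposal is correct and takes essentially the same route as the paper: dominate the residuals by a branching process with inhomogeneous immigration of intensity $O\bigl(\tfrac{m+n}{K}Z^K_q\bigr)$, renormalize by $e^{-r_\star t}$ so the branching drift cancels, and get $\sqrt{(m+n)\log K/K}$ from BDG on the martingale part (which dominates the $(m+n)\log K/K$ compensator term). The only cosmetic difference is that you work with $D^K = Z^K_r + N^K_{M,r}$ directly and absorb the residual drift $O((m+n)/K)$ of $N^K_{M,r}$ via Gronwall, whereas the paper treats $Z^K_r$ and $N^K_{M,r}$ separately and renormalizes $N^K_{M,r}$ by $e^{-(r_\star + 2C_Ln/K)t}$ to make the drift vanish exactly --- both routes produce the same $\exp(C(m+n)\log K/K)$ factor.
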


\medskip
\begin{proof} 
 Let us first prove i). Since $b_M$ and $d_M$ are locally Lipschitz, there exists $C_L>0$
 (which may change from line to line) such that  for any $s\leq T^K_{R,M}(Ln,n)$ and $n\leq K$,
	\begin{equation}\label{ZnKtaux}
	Z_q^K(s)\vert b_M(X^K(s))-b_{\star}\vert +Z_q^K(s)\vert d_M(X^K(s))- d_{\star} \vert\leq C_L Z_q^K(s)\frac{n}{K}
	\end{equation}
 almost surely. Then, recalling that $Z^K_q = N^K_{M,q}$,
	$$ Z_q^K(s) b_{\star} -m^K(s) \, + \,   p^K(s)-Z_q^K(s) d_{\star} \leq C_L Z_q^K(s)\frac{n}{K}.$$
	Therefore, the process 	$Z_r^K$ can be stochastically dominated 
	on the time window $[0, T^K_{R,M}(Ln,n)]$   by a branching process with inhomogeneous immigration $Y$, unique strong solution of
	\Bea
	Y(t)&=&\int_0^t\int_{\R_+}  \uno{ u\leq C_L Z_{q}^K(s-)\frac{n}{K} }\, \mathcal N^I(ds,du)\\
	&&\quad +\int_0^t\int_{\R_+}  \uno{0\leq u\leq  Y(s-) b_{\star}}\, {\mathcal N^b_Y(ds,du)} -\int_0^t  \int_{\R_+} \uno{0\leq u \leq Y(s-) d_{\star}} \,{\mathcal N^d_Y(ds,du)},
	\Eea
	{where $\mathcal N^I,\mathcal N^b_Y,\mathcal N^d_Y$ are independent Poisson point measures with Lebesgue intensity on~$\R_+^2$.} 
	We can easily note that for any $t\geq 0$, $$\esp{{Z_{q}^K}(t)}\leq \esp{Z(t)}=e^{r_{\star}t}$$   and straightforward computation leads to
	\begin{equation}\label{inegaliteY}
    \esp{Y(t\wedge T^K_{R,M}(Ln,n))} \leq C_L \, \frac{n}{K}\, e^{2r_{\star}t}.
    \end{equation}
    {Indeed, denoting $y(t) = \esp{Y(t\wedge T^K_{R,M}(Ln,n))}$, the function~$y$ satisfies the following~: for all~$t>0$,
    \begin{align*}
        y(t) =& C_L \frac{n}{K}\int_0^{t\wedge T^K_{R,M}(Ln,n)} \esp{Z^K_q(s)}ds + r_\star \int_0^{t\wedge T^K_{R,M}(Ln,n)} y(s)ds\\
        \leq& C_L\frac{n}{K} \int_0^t e^{r_\star s}ds + r_\star\int_0^t y(s)ds\\
        \leq& C_L\frac{n}{K} e^{r_\star t}/r_\star + r_\star\int_0^t y(s)ds.
    \end{align*}
    Hence Gronwall's inequality gives~\eqref{inegaliteY}.
    }
 
	Let $M({t})$ be the martingale part of the semimartingale $Y(t)$ {(i.e. defined by the same expression as~$Y$ replacing the three Poisson point measures by their compensated versions)}, with quadratic variation process given by 
	$$\langle M \rangle ({t}) = \int_{0}^t  \Big(C_LZ_{q}^K(s)\frac{n}{K} + Y(s) (b_{*}+ d_{*})\Big) ds.$$
Using Doob and Cauchy Schwarz inequalities and  $T^K_{R,M}(Ln,n )\leq
4(\log K)/(3 r_\star)$ by  \eqref{stoppingtime}, we obtain from the previous estimates 
	$$\E\left(\sup_{t\leq  T^K_{R,M}(Ln,n )}\left| \int_{0}^t e^{-r_{\star}s} dM({s})\right|\right)\leq 
	C \E\bigg(\int_{0}^{ T^K_{R,M}(Ln,n )} e^{-2 r_{\star}s}d \langle M\rangle(s)\bigg)^{1/2}\leq C_L\sqrt{\frac{ n \log K}{K}}.$$
We compute now
	\begin{align*}
	Y(t)e^{-r_{\star}t}=& \int_0^t e^{-r_\star s}dY(s) - r_\star\int_0^t Y(s) e^{-r_\star s}ds\\
    = & \int_{0}^t e^{-r_{\star}s} dM({s}) + \int_{0}^t e^{-r_{\star}s} \Big(C_LZ_{q}^K(s)\frac{n}{K} + Y(s) (b_{*}- d_{*}) -Y(s) (b_{*}- d_{*})\Big) ds\\
	= & \int_{0}^t e^{-r_{\star}s} dM({s}) + \int_{0}^t e^{-r_{\star}s} \,C_LZ_{q}^K(s)\frac{n}{K} ds.
	\end{align*}
Combining the previous estimates and using again $T^K_{R,M}(n,Ln )\leq
4\log K/(3r_{\star})$, we get 
	\begin{align*}
	\E\left(\sup_{t\leq T^K_{R,M}(Ln,n )} Y(t)e^{-r_{\star}t}\right)
	&\leq C_L \left( \sqrt{\frac{ n \log K}{K}} + \frac{ n \log K}{K}\right).
	\end{align*}
 Using the domination of $Z^K_{r}$ by $Y$, we obtain
	$$\E\left(\sup_{t\leq  T^K_{R,M}(Ln,n)} Z_r^K(t)e^{-r_{\star}t}\right)\leq  C_L\sqrt{ \log K \frac{n}{K} },$$
	which proves the first estimate 
 $i)$ for $Z_r$. \\
	
	The  assertion $i)$ for $N^K_{M,r}$ can be proved in a similar way. Indeed
	the individual birth and death rates are respectively upper-bounded by $b_{\star}+C_Ln/K$ and lower-bounded by $d_{\star}-C_Ln/K$ for $t\leq T^K_{R,M}(Ln,n)$.
	So $N^K_{M,r}$  is  dominated on the time window $[0,T^K_{R,M}(Ln,n)]$ by the process $\widetilde Y$
	defined by
	\begin{eqnarray*}
	\widetilde Y(t)&=&\int_0^t\int_{\R_+}  \uno{ u\leq C_LZ_q(s-)n/K }\, \mathcal N^I(ds,du)\\
	&&\quad +\int_0^t\int_{\R_+}  \uno{0\leq u\leq  \widetilde Y(s-) (b_{\star}+C_Ln/K)}\, \mathcal N^b_{\widetilde Y}(ds,du) -\int_0^t  \int_{\R_+} \uno{0\leq u \leq \widetilde Y(s-) (d_{\star}-C_Ln/K)} \,\mathcal N^d_{\widetilde Y}(ds,du).
	\end{eqnarray*}
 The conclusion follows as above by considering  $\widetilde Y(t)\exp(-(r_\star +2C_Ln/K)t)$ 
 for $t\leq T^K_{R,M}(Ln,n)$.\\

	For the second case $ii)$,  we proceed similarly. {The only difference is the upper-bound for the resident population in the stopping time~$T^K_{R,M}(\sqrt{Kn},n)$,}
 and~\eqref{ZnKtaux} becomes
 	$$Z_q^K(s)\vert b_M(X^K(s))-b_{\star}\vert +Z_q^K(s)\vert d_M(X^K(s))- d_{\star} \vert\leq C Z_q^K(s)\sqrt{\frac{n}{K}},$$
  almost surely     for $s\leq T^K_{R,M}(\sqrt{K n},n)$, since $n\leq K$. 
	Following the steps of case~$i)$,
	$$\E\left(\sup_{t\leq T^K_{R,M}(\sqrt{Kn},n)} Z_r^K(t)e^{-r_{\star}t}\right)\leq C \left(\log K \sqrt{\frac{n}{K}} \rr)^{1/2}.$$
	Proceeding similarly for $N_{M,r}^K$ ends the proof. {Notice that the exponential rescaling give the exponential factor in the statement of the lemma.}
\end{proof}

\medskip
{The next lemma guarantees that, in some sense, $N^K$ is equivalent to~$Z$ in probability until suitably chosen time~$T^K_{R,M}(m,n)$. The proof uses the fact that, by definition, $N^K_{M,q}$ and $Z^K_q$ are the same process, whence it is sufficient to control the residual processes~$N^K_{M,r}$ and~$Z^K_r$ normalized by~$Z$. The previous Lemma~\ref{lemme26} is important for this last step.}

\begin{lemme}\label{auxbranch}
	Under Assumptions~\ref{assumpt} and \ref{assumptCI}, 
	
	i) For any $L>0$, there exists  $C_L>0$
 such that for any small enough $\eta>0$ and $n\leq K/(\log K)$,
	\begin{align}
	\label{equivalent}
\mathbb{P}\bigg(\underset{t\leq T^K_{R,M}(Ln,n)}{\sup}~\left|\frac{N^K_M(t)}{Z(t)}-1\rr| > \eta\ ;\ W>0\bigg) \leq \frac{C_L}{\eta^{1/4}}\left(\frac{n\log K}{K}\right)^{1/10}\exp\left(C_L \frac{n\log K}{K}\right).
	\end{align}
	
	ii)  There exists $C>0$ such that for  any small enough $\eta>0$ and $n\leq K/(\log K)^2$,
		\begin{align}
		\label{equivalent2}
\mathbb{P}\bigg(\underset{t\leq T^K_{R,M}(\sqrt{Kn},n)}{\sup}~ \left|\frac{N^K_M(t)}{ Z(t)}-1\rr| > \eta\ ;\ W>0\bigg) \leq \frac{C}{\eta^{1/4}}\left(\log K\sqrt{\frac{n}{K}}\right)^{1/10}\exp\left(C\log K\sqrt{\frac{n}{K}}\right).
\end{align}
\end{lemme}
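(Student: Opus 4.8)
The plan is to deduce Lemma~\ref{auxbranch} from Lemma~\ref{lemme26} by a direct truncation argument. Since $N^K_{M,q} = Z^K_q$ by construction (Lemma~\ref{couplage}), on the event $\{W>0\}$ we write
\[
\left|\frac{N^K_M(t)}{Z(t)}-1\right| = \left|\frac{N^K_{M,r}(t) - Z^K_r(t)}{Z(t)}\right| \leq \frac{N^K_{M,r}(t) + Z^K_r(t)}{Z(t)},
\]
so it suffices to control the supremum over $t\leq T^K_{R,M}(m,n)$ of the ratio of the residual processes to $Z(t)$. The numerator is handled by Lemma~\ref{lemme26}, which bounds $\esp{\sup_t (Z^K_r(t)+N^K_{M,r}(t))e^{-r_\star t}}$; the denominator must be bounded below by splitting on whether $W(t)=Z(t)e^{-r_\star t}$ stays away from $0$.

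First I would fix a small threshold $\delta>0$ (to be optimized at the end as a power of the right-hand side) and introduce the events
\[
A = \Big\{\inf_{t\geq 0} W(t) > \delta\Big\}, \qquad B = \Big\{\sup_{t\leq T^K_{R,M}(m,n)} (Z^K_r(t)+N^K_{M,r}(t))e^{-r_\star t} > \delta\eta\Big\}.
\]
On $A\cap B^c$ we have $Z(t) \geq \delta e^{r_\star t}$ and $Z^K_r(t)+N^K_{M,r}(t) \leq \delta\eta e^{r_\star t}$ for all relevant $t$, hence $\sup_t |N^K_M(t)/Z(t)-1| \leq \eta$. Therefore
\[
\mathbb{P}\Big(\sup_{t\leq T^K_{R,M}(m,n)}\big|N^K_M(t)/Z(t)-1\big|>\eta\,;\,W>0\Big) \leq \mathbb{P}\big(\{W>0\}\setminus A\big) + \mathbb{P}(B).
\]
The second term is bounded by Markov's inequality together with Lemma~\ref{lemme26}: in case i) it is at most $(\delta\eta)^{-1} C_L (n\log K/K)^{1/2}\exp(C_L n\log K/K)$, and analogously with $\log K\sqrt{n/K}$ in case ii). For the first term, on $\{W>0\}$ the nonnegative martingale $W(t)$ converges a.s.\ to $W>0$ and is bounded away from $0$ on every sample path; one gets $\mathbb{P}(\{W>0\}\setminus A) = \mathbb{P}(0 < \inf_t W(t) \leq \delta) \to 0$ as $\delta\to 0$, and more precisely this probability is $O(\delta^{1/2})$ (or some explicit power of $\delta$) by the known distribution of $\inf_t W(t)$ for a supercritical binary branching martingale — for a Yule-type process the infimum of the martingale has an explicit law, and in general $\mathbb{P}(\inf_t W(t) \leq \delta, W>0) \leq C\sqrt{\delta}$ can be obtained e.g.\ from the fact that $W(t)$ restricted to survival is bounded below by an exponential-type variable, or via a soft argument: $\{W>0\}=\{\exists t_0: Z(t_0)\geq k\}$ for each $k$, and conditionally on $Z(t_0)=k$ the process restarts as a sum of $k$ i.i.d.\ copies whose martingale limit concentrates, making the infimum unlikely to be small when $k$ is large — but the cleanest route is just to invoke the explicit law of $\inf W(t)$.

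Then I would optimize over $\delta$: balancing $\delta^{1/2}$ against $(\delta\eta)^{-1} \varepsilon_K$, where $\varepsilon_K = C_L(n\log K/K)^{1/2}\exp(\cdots)$ in case i), gives $\delta \asymp (\varepsilon_K/\eta)^{2/3}$ and a total bound of order $\eta^{-1/3}\varepsilon_K^{1/3}$; tuning the exponents (the statement uses $\eta^{-1/4}$ and the $1/10$ power) just amounts to choosing $\delta$ as a slightly different power, which is why the stated exponents $1/4$ and $1/10$ are not tight but convenient — I would simply pick $\delta = (\varepsilon_K/\eta)^{\alpha}$ with $\alpha$ arranged so that $\delta^{1/2}$ and $\varepsilon_K/(\delta\eta)$ are both at most $C\eta^{-1/4}\varepsilon_K^{1/10}$, using that $\varepsilon_K \leq 1$ for $K$ large. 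Case ii) is identical with $\varepsilon_K$ replaced by $C(\log K\sqrt{n/K})^{1/2}\exp(\cdots)$.

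The main obstacle is the lower bound on $Z(t)$, i.e.\ controlling $\mathbb{P}(0<\inf_t W(t)\leq \delta)$ with an explicit power of $\delta$; everything else is Markov's inequality plus the already-proven Lemma~\ref{lemme26}. This step is where one must use the precise structure of the branching process $Z$ — its martingale limit $W$ has a density bounded near $0$ and, more to the point, the running infimum of the martingale $W(t)$ has, on the survival event, a law with a bound of the form $\mathbb{P}(\inf_t W(t)\leq \delta\mid W>0)\leq C\sqrt\delta$ (for binary branching this can be read off from the generating-function identity for $\inf_t W(t)$, or bounded crudely via the first passage of $Z$ to a large level $k$ followed by a law of large numbers giving $W \approx k e^{-r_\star t_k}$ with small fluctuations). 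I would state this as a short separate sub-lemma or cite the relevant computation for supercritical branching processes, then plug it in.
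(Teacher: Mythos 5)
Your argument is essentially the paper's proof, modulo a detail of bookkeeping and one over-optimistic claim. The paper also writes $|N^K_M/Z-1| = |N^K_{M,r}-Z^K_r|e^{-r_\star t}/W(t) \leq U^K/\inf_t W(t)$, where $U^K$ is the supremum of $(N^K_{M,r}+Z^K_r)e^{-r_\star t}$, then splits exactly as you do on whether $U^K$ is large (Markov plus Lemma~\ref{lemme26}) or $\inf_t W(t)$ is small (a separate infimum lemma). The specific threshold in the paper is $\varepsilon^{2/5}$ with $\varepsilon=n\log K/K$, which together with the infimum bound gives the $\eta^{-1/4}\varepsilon^{1/10}$ form in the statement.

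The one place you should be careful is the infimum estimate. You assert $\mathbb{P}(0<\inf_t W(t)\leq\delta)\leq C\sqrt{\delta}$ and suggest it could be read off from an "explicit law of $\inf_t W(t)$". For a general supercritical birth-and-death branching process (not a Yule process -- here $Z$ has deaths) no such explicit law is available, and the paper does not claim a $\sqrt\delta$ rate. What the paper actually proves, via a second-moment decomposition over time intervals (Lemma~\ref{propbranchement}) followed by a stopping-time argument (Lemma~\ref{infw}), is the weaker bound $\mathbb{P}(\inf_t W(t)\leq\delta;W>0)\leq C\,\delta^{1/4}$, which is what produces the stated $\eta^{-1/4}$ and $1/10$ exponents after balancing. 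Your remark "(or some explicit power of $\delta$)" does save the argument at the level of the route taken, but you should not state $\sqrt\delta$ without proof: if you wish to be self-contained, you must actually establish some power-law bound on $\mathbb{P}(\inf_t W(t)\leq\delta;W>0)$, and the elementary argument you sketch (restart at a large level and invoke concentration) needs care to turn into a quantitative estimate with an explicit exponent. Replacing your unproved $\sqrt\delta$ by the paper's $\delta^{1/4}$ (citing or reproving Lemmas~\ref{propbranchement} and \ref{infw}) makes the proof complete and matches the exponents in the statement.
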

\begin{proof}
	 Let us prove $i)$ and focus on the  survival event $\{W>0\}$ where the process
	$Z$ stays positive.
	On this event, using Lemma  \ref{couplage}, we obtain 
	$$\frac{\vert N_M^K(t) -Z(t) \vert}{Z(t)} = \frac{\vert N_{M,r}^K(t) -Z^K_{r}(t) \vert}{  Z(t)}
 =\frac{\vert N_{M,r}^K(t) -Z^K_{r}(t) \vert e^{-r_{\star}t}}{  W(t)},$$
	where $W(t)= Z(t) e^{-r_\star t}$ is the classical martingale associated with $Z$.
	
	We use now Lemma~\ref{reste} $i)$ and the fact that $W(t)$ converges to $W$ a.s. as $t$ tends to infinity. We deduce that  for
	$t\leq  T^K_{R,M}(Ln,n)$,
	$ \vert N_{M,r}^K(t) -Z^K_{r}(t) \vert/ Z(t)$ tends to $0$ in probability on the event $W>0$, as $K$ tends to infinity.\\
	
	Let us now obtain   an explicit convergence speed. For convenience we define
	$$
 U^K = \underset{t\leq T^K_{R,M}(Ln,n)}{\sup} Z_r(t) e^{-r_\star t} + \underset{t\leq T^K_{R,M}(Ln,n)}{\sup} N^K_{M,r}(t) e^{-r_\star t}.$$
	The previous computations ensure that
		$$\frac{\vert N_M^K(t) -Z(t) \vert}{Z(t)} \leq \frac{  U^K}{\inf_{t\geq 0} W(t)}$$
	a.s.	on the event $W>0$.
Moreover, choosing $\varepsilon=n\log K/K$,  for any $\eta>0$,
	\begin{align*}
	   \pro{U^K \geq \eta~\underset{t\geq 0}{\inf}~W(t)\, ; \, W>0}
	    \leq
	    \pro{U^K\geq \varepsilon^{2/5}}+\pro{\underset{t\geq 0}{\inf}~W(t)\leq \frac1\eta \varepsilon^{2/5}\, ; \, W>0}.
	\end{align*}
	By Markov's inequality and Lemma~\ref{reste},
	$$\pro{U^K\geq \varepsilon^{2/5}} \leq C_L\,\varepsilon^{1/2-2/5} e^{C_L\varepsilon} = C_L \varepsilon^{1/10}e^{C_L\varepsilon}.$$
		On the other hand, by Lemma~\ref{infw} in Appendix,
	$$\pro{\underset{t\geq 0}{\inf}~W(t)\leq  \frac1\eta \varepsilon^{2/5}\, ; \, W>0} \leq C \frac1{\eta^{1/4}} \varepsilon^{2/5 \times 1/4} = C \frac{1}{\eta^{1/4}}\varepsilon^{1/10}.$$
	
	This proves the result in the  case~$i).$ Case~$ii)$ is proven similarly.
\end{proof}

\medskip Let us now compare the 
times when mutant and resident population processes reach their threshold.
\begin{lemme}\label{controlRbranch}
We denote $s_K=4 (\log K)/(3r_{\star})$.
	Under Assumptions~\ref{assumpt} and \ref{assumptCI},
	\begin{itemize}
	\item[i)] If $\partial_R F_R(x^{\star})<0$,
	then  there exist $L,C>0$ such that for any $K\geq 2$ and  $n\leq K/\log   K$,
	$$\pro{T^K_{M}(N^K(0),n)\geq T_R^K(N^K(0),Ln) \wedge s_K \, ; \, W>0 }\leq C\left(\frac{\sqrt{K\log K}}{n} +\frac{1}{K^{1/3}} + \left(\frac{n \log K}{K}\right)^{1/10}\right).$$
	
	\item[ii)] If $\partial_R F_R(x^{\star})=0$, then  
  there exist $L,C>0$ such that for any  $K\geq 2$ and  $n\leq L K/\log (K)^2$,
	$$\pro{T^K_{M}(N^K(0),n)\geq T_R^K(N^K(0),\sqrt{Kn}) \wedge s_K  \, ; \, W>0 }\leq C\left(\sqrt{\frac{\log K}{n}} +\frac{1}{K^{1/3}}+\left(\log K\sqrt{\frac{n}{K}}\right)^{1/10}\right).$$
 \end{itemize}
\end{lemme}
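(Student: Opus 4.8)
The plan is to bound the resident process $N^K_R$ directly and show that, on the survival event, it cannot have moved by the prescribed threshold ($Ln$ in case $i)$, $\sqrt{Kn}$ in case $ii)$) before time $T^K_M(N^K(0),n)\wedge s_K$, except on an event of the stated small probability. The key point is that on the time window in question, the number of mutants is at most $n$, so the resident birth and death rates $b_R(X^K),d_R(X^K)$ differ from their values $b_R(x^\star),d_R(x^\star)$ at the equilibrium only because (a) $X^K_M\le n/K$ and (b) $X^K_R$ itself has drifted. Using Assumption~\ref{assumpt} (the $C^2$ regularity and $\partial_R F_R(x^\star)\le 0$), a second-order Taylor expansion of $F_R$ around $x^\star=(x_R^\star,0)$ gives, for $X^K$ in a fixed neighbourhood of $x^\star$,
\begin{equation*}
F_R(X^K_R,X^K_M)=\partial_R F_R(x^\star)\,(X^K_R-x_R^\star)+\partial_M F_R(x^\star)\,X^K_M+O\!\left((X^K_R-x_R^\star)^2+(X^K_M)^2\right).
\end{equation*}
First I would decompose $N^K_R(t)-x_R^\star K$ into a martingale term plus a drift term $\int_0^t N^K_R(s)F_R(X^K(s))\,ds$ and plug in this expansion.

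In case $i)$, where $\lambda:=-\partial_R F_R(x^\star)>0$, the linear term $-\lambda(X^K_R-x_R^\star)$ is a restoring force: on the event that $|N^K_R-x_R^\star K|$ has not yet exceeded $Ln$, the drift of $D^K(t):=N^K_R(t)-x_R^\star K$ is bounded by $-\lambda D^K(t)+Cn$ (the $Cn$ coming from the $X^K_M\le n/K$ term, multiplied by $N^K_R\asymp K$, plus the quadratic remainder which is $O(n^2/K)=o(n)$ since $n\le K/\log K$). A Gronwall / comparison argument on $\esp{|D^K(t)|}$ or directly on $\esp{(D^K)^2}$ then shows $\esp{\sup_{t\le T}|D^K(t)|}\lesssim n + \sqrt{K\,T}$; with $T\asymp \log K$ this is $\lesssim n+\sqrt{K\log K}$, so choosing $L$ large enough, a Doob/Markov estimate bounds $\pro{T^K_R(N^K(0),Ln)\le s_K\wedge T^K_M(N^K(0),n)}$ by $C\sqrt{K\log K}/n$. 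Combined with the events from Lemma~\ref{auxbranch} (which is where the $\eta$-free terms $1/K^{1/3}$ and $(n\log K/K)^{1/10}$ enter, after absorbing the exponential factor using $n\le K/\log K$), and with the event $\{N^K_M$ reaches $n$ at all on $\{W>0\}\}$ being essentially sure, one gets exactly the claimed bound. Here the precise choice $\lambda_{\text{trunc}}=4/3$ in \eqref{stoppingtime} matters: it ensures $e^{r_\star s_K}=K^{4/3}$, so that $\esp{Z(s_K)}=K^{4/3}\gg K$, guaranteeing with high probability that $N^K_M$ does reach level $n\le K/\log K$ before the truncation time, which is what lets us replace the truncated stopping time by the genuine hitting time.

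In case $ii)$, $\partial_R F_R(x^\star)=0$, so there is no restoring linear term and we only have the bound $|F_R(X^K)|\le C(|X^K_R-x_R^\star|+X^K_M)$ on the relevant window, hence the drift of $D^K$ is $O(n)$ (from $X^K_M\le n/K$ times $N^K_R\asymp K$) plus $O(|D^K|)$; iterating on $[0,s_K]$ with $s_K\asymp\log K$ gives $\esp{\sup|D^K|}\lesssim n\log K + \sqrt{K\log K}$. Since now we only ask $|D^K|$ to stay below $\sqrt{Kn}$ and $n\le LK/(\log K)^2$ makes $n\log K\ll\sqrt{Kn}$ and $\sqrt{K\log K}\ll\sqrt{Kn}$ once $n\gg\log K$, Markov's inequality bounds the resident-exit probability by $C(n\log K+\sqrt{K\log K})/\sqrt{Kn}\le C(\sqrt{\log K/n}+\log K\sqrt{n/K})$, and the first term dominates in the regime of interest; adding the contribution of Lemma~\ref{auxbranch}$(ii)$ and the $1/K^{1/3}$ term yields the stated estimate.

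The main obstacle is the bookkeeping of the coupled stopping times: the quantity we want to control, $\esp{\sup_{t\le T}|D^K(t)|}$, must be estimated on the event where neither threshold has been crossed yet, so the Taylor expansion is only valid up to the stopping time $T^K_{R,M}$ and one has to run the Gronwall argument for the stopped process, then argue that the stopped and unstopped versions agree on the event of interest. A secondary technical point is verifying that the quadratic remainder in the Taylor expansion is genuinely negligible compared to the main drift term in each regime — in case $i)$ this needs $n^2/K\ll n$, i.e. $n\ll K$, which holds, and in case $ii)$ it needs $(\sqrt{Kn}/K)^2\cdot K=n\ll$ the target $\sqrt{Kn}$, i.e. $n\ll K$, again fine — but it should be stated carefully. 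Finally, one must confirm that on $\{W>0\}$ the mutant genuinely reaches level $n$ before $s_K$ with probability $1-o(\text{bound})$; this follows from the branching approximation $N^K_{M,q}=Z^K_q$, the lower bound $Z^K_q\ge Z-Z^K_r$, Lemma~\ref{lemme26}, and the fact that $Z e^{-r_\star t}\to W>0$, so that $Z(s_K)\gtrsim W K^{4/3}\gg n$ on $\{W>0\}$.
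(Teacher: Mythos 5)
Your overall plan — control the resident excursion and the "mutant fails to reach level $n$ before $s_K$" event separately, then combine — is the right one and matches the paper's two-step structure. Your Step 1 argument (Taylor expansion of $F_R$ around $x^\star$, decomposition of $N^K_R - x^\star_R K$ into martingale plus drift, and exploitation of the restoring force $-\lambda$ when $\partial_R F_R(x^\star)<0$) is essentially the paper's, which uses the discrete-time Gronwall-type Lemma~\ref{lemmeadrien} to isolate the martingale increments over unit intervals. However, the way you apply Markov's inequality in case $i)$ is imprecise as stated: from $\mathbb{E}[\sup_{t\le T}|D^K(t)|]\lesssim n+\sqrt{K\log K}$, Markov at level $Ln$ gives $1/L+\sqrt{K\log K}/(Ln)$, whose first term does not vanish. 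The correct argument (and the paper's) is that the deterministic drift part of $|D^K|$ is bounded \emph{almost surely} by $An$ for a fixed $A$, so exceeding $Ln$ with $L>2A$ forces the martingale supremum to exceed $(L-A)n\ge n$, and Markov applied to the martingale alone gives $\sqrt{K\log K}/n$. You should phrase it that way, not via an expectation of the total.

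The genuine gap is in Step 2. You claim ``$Z(s_K)\gtrsim W K^{4/3}\gg n$ on $\{W>0\}$,'' but that is false: $Z(s_K) = W(s_K)e^{r_\star s_K} = W(s_K)K^{4/3}$, and $W(s_K)$ is the martingale evaluated at a \emph{finite} time, not its a.s.\ limit $W$. Even on $\{W>0\}$, $W(s_K)$ can be arbitrarily small, and nothing guarantees $W(s_K)\gtrsim W$. The whole point of the paper's Lemma~\ref{propbranchement} (together with Lemma~\ref{infw}) is to quantify precisely the probability $\pro{W(s_K)\le \eps;\,W>0}\le C(\eps+\eps^{-1}K^{-2/3})$; with $\eps=2K^{-1/3}$ this yields the $K^{-1/3}$ term in the stated bound. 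You attribute the $1/K^{1/3}$ to Lemma~\ref{auxbranch}, but Lemma~\ref{auxbranch} only produces the $(n\log K/K)^{1/10}$ factor (from controlling the coupling ratio $N^K_M/Z$ with a fixed $\eta$, say $\eta=1/2$). Without invoking the lower-deviation estimate for the branching martingale at a finite stopping time, your Step~2 does not close: the event that neither threshold is hit before $s_K$ is contained (on $\{W>0\}$) in $\{Z(s_K)\le K+1\}\cup\{\sup|N^K_M/Z-1|>1/2\}$, and it is the first event — controlled by Lemma~\ref{propbranchement}, not by anything in Lemma~\ref{lemme26} or Lemma~\ref{auxbranch} — that produces $K^{-1/3}$.
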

Observe that $\{T^K_{M}(N^K(0),n)\geq T_R^K(N^K(0),m) \wedge s_K\}=\{T^K_{R}(N^K(0),m) = T^K_{R,M}(m,n) \}$. 
We are using the  result for sequences with $m=\xi_K$ which makes the right hand side go to zero.
This will ensure that the mutant population process reaches its threshold before the resident population process.

\begin{proof}[Proof of case i)]
Let us write $T^K=T^K_{R,M}(Ln,n)$ for convenience. The first step consists in controlling the resident population. Let us    prove that there exist $L,C>0$
such that  for any $K\geq 2$ and $n\leq K/\log K$,
\begin{align}
&\mathbb P\left(T_R^K(N^K(0),Ln) \leq T^K_{M}(N^K(0),n)\wedge s_K\right) \nonumber \\
&\qquad \qquad =\pro{\underset{t\leq T^K }{\sup}~\left|X^K_R(t) - x^{\star}_R\rr| \geq L \frac{n}{K}} \leq C\frac{\sqrt{K \log K}}{n}, \label{STEP1}
\end{align}
For that purpose, we first notice that, for any $t\leq T^K$
and $K\geq 2$,
\begin{equation}
\label{borneprocess}
 X^K_M(t)\leq \frac{n}{K}\leq \frac{1}{\log K} \, \textrm{ and } \, X^K_R(t) \leq x^{\star}_R + L\frac{n}{K}\leq x^{\star}_R+\frac{L}{\log K}.   
\end{equation}
Hence, the processes $X^K_M$ and $X^K_R$ are bounded up to time~$T^K$, uniformly for $K\geq 2$.\\
We define  
\begin{align*}M^K(t)&=\frac1K\int_{[0,t]\times\r_+}\uno{u\leq N^K_R(s-) b_R(X^K(s-))}\widetilde\N^b_R(ds,du) \\
&\qquad - \frac1K\int_{[0,t]\times\r_+}\uno{u\leq N^K_R(s-) d_R(X^K(s-))}\widetilde\N^d_R(ds,du), 
\end{align*}
where 
 $\widetilde\N^b_R$ and $\widetilde\N^d_R$ are  the compensated Poisson martingale measures of $\N^b_R$ and $\N^d_R.$  We obtain
	\begin{align*}
X^K_R(t) 
	=& X^K_R(0)  + M^K(t) + \int_0^t G_R(X^K(s))ds,
	\end{align*}
 where 
	$$G_R(x_R,x_M) = x_R(b_R-d_R)(x_R,x_M) = x_R\,F_{R}(x_R,x_M).$$ 
	The quadratic variation process of the  martingale part~$M^K$ is the following  
\begin{align}	
\label{varqu} 
 \langle M^K\rangle({t}) = \frac{1}{K}\int_{0}^{t} X^K_R(s)\Big( b_R(X^K(s))+ d_R(X^K(s))\Big)ds.
 \end{align}
	Observing that $G_R(x^{\star})= 0$ by definition of $x^{\star}_R$, we rewrite the dynamics above as 
	\begin{align*}
	X^K_R(t) =& X^K_R(0)  + M^K(t) +H^K_{1}(t) + H^K_{2}(t)+\int_0^t (X^K_R(s) - x^{\star}_R) \partial_R G_R(x^{\star})ds,
	\end{align*}
	where
	$$H^K_{1}(t) =  \int_0^t X^K_M(s)\partial_M G_R(x^{\star})ds$$
	and
	$$H^K_{2}(t) = \int_0^t  \Big(G_R(X^K(s-)) - G_R(x^{\star}) - (X^K_R(s) - x^{\star}_R) \partial_R G_R(x^{\star}) - X^K_M(s)\partial_M G_R(x^{\star})\Big)ds.$$
Denoting
        $$Y^K_R(t) := X^K_R(t) - x^\star_R,$$
	we have
	\begin{equation}\label{YKRadrien}
	Y^K_R(t) = (X^K_R(0) - x^\star_R) +H^K(t)+ \int_0^t Y^K_R(s) \partial_R G_R(x^{\star})ds,
	\end{equation}
 where 
 $$H^K(t) := M^K(t)+H^K_{1}(t)+ H^K_{2}(t).$$
Recall that $\partial_R F_R(x^{\star})<0$ and so $\partial_R G_R(x^{\star})<0$.
Then Lemma~\ref{lemmeadrien} ensures that 
for any $K$ 
	\begin{eqnarray*}
		\underset{0\leq s\leq T^K}{\sup}\left|Y^K_R(s)\rr|&\leq& \Gamma \left|X^K_R(0) - x^\star_R\right| +\Gamma\, \underset{0\leq s\leq T^K}{\sup}~\left|H^K(s) - H^K(\lfloor s\rfloor)\rr|,\end{eqnarray*}
	with
	$$\Gamma := \left(1 + |\partial_R G_R(x^{\star})|\rr)\frac{1}{1 - e^{\partial_R G_R(x^{\star})}}.$$
Using that $H^K=M^K+H^K_{1}+ H^K_{2}$
and {(using Taylor-Lagrange's inequality to obtain the following bound for~$H^K_2$)}
$$\underset{t\leq T^K}{\sup} \left|H^K_1(t) - H^K_1(\lfloor t\rfloor)\rr| \leq \frac{n}{K} |\partial_M G_R(x^{\star})|, \quad \underset{t\leq T^K}{\sup} \left|H^K_2(t) - H^K_2(\lfloor t\rfloor)\rr|\leq  C\left(\frac{n}{K}\rr)^2$$
almost surely
for some constant $C>0$, by definition of $T^K$ and $H^K_1, H^K_2$, 
we get 
\begin{align*}
	\underset{0\leq s\leq T^K}{\sup}\left|Y^K_R(s)\rr|\leq  \Gamma\left(\underset{t\leq T^K}{\sup} \left|M^K(t) - M^K(\lfloor t\rfloor)\right|+ C\frac{n}{K} \right).
\end{align*}
Besides, using \eqref{borneprocess} and \eqref{varqu} and $T^K\leq C\log K$, we get
 $\esp{\langle M^K\rangle_{T^K}} \leq C (\log K)/{K}.$
 By Markov's and Burkholder-Davis-Gundy's inequalities, we then obtain
	\begin{align*}
	\pro{\underset{t\leq T^K}{\sup} \left|M^K(t) - M^K(\lfloor t\rfloor)\rr|\geq \frac{n}{K}}\leq& C  \frac{K}{n} \esp{\underset{t\leq T^K}{\sup}~|M^K(t)|}\\
	\leq&C  \frac{K}{n}  \esp{\langle M^K\rangle_{T^K}}^{1/2}\\
	\leq& C \frac{K}{n} \left(\frac{\log K}{K}\rr)^{1/2}\leq C\frac{\sqrt{K\log K}}{n}.
	\end{align*}
Recalling that   $Y^K_R(t)=X^K_R(t) - x^\star_R$  and $n\leq K$, {choosing~$L$ large enough} and combining these two last estimates  yields 
 the expected inequality $\eqref{STEP1}$
 and ends the first step.
 \\

 
We turn to the second step ii) and prove that 
	\begin{align}
	&\pro{T^K_{M}(N^K(0),n)\wedge T_R^K(N^K(0),Ln)\geq s_K \, ; \, W>0}  
 \leq  C K^{-1/3} +
 C\left(\frac{n\log K}{K}\right)^{1/10}.
 \label{tauMbranchement}
 \end{align}
Let us notice that, a.s. on the event~$\{W>0\},$
\begin{align*}
&\left\{T^K_M(N^K(0),n)\wedge T^K_R(N^K(0),n) \geq s_K\right\}\\
&\qquad \quad \subseteq\left(\left\{N^K_M(s_K)\leq n+1\right\}\cap \left\{\frac{Z(s_K)}{N^K_M(s_K)}\leq 2\right\} \right)\cup \left\{\underset{t\leq T^K}{\sup}\left|\frac{N^K_M(t)}{Z(t)}-1\right|>\frac12\right\}\\
&\qquad \quad \subseteq \left\{Z(s_K)\leq K+1\right\}\cup\left\{\underset{t\leq T^K}{\sup}\left|\frac{N^K_M(t)}{Z(t)}-1\right|>\frac12\right\}.
\end{align*}
Hence, using  $Z(s_K)=W(s_K)e^{r_\star s_K}=W(s_K)K^{4/3}$ (by definition of $s_K$),
\begin{multline*}
\pro{T^K_M(N^K(0),n)\wedge T^K_R(N^K(0),n) \geq s_K; W>0}\\
\leq \pro{W(s_K)\leq  2K^{-1/3}\, ; \, W>0} + \pro{\underset{t\leq T^K}{\sup}\left|\frac{N^K_M(t)}{Z(t)}-1\right|>\frac12 \, ; \, W>0}.
\end{multline*}
The first term  of the right hand side
involves an estimate on the infimum of a  single type branching process. It will be   stated and proved 
in Lemma~\ref{propbranchement} in forthcoming Appendix. The second term of the sum in the right hand side has been controlled in Lemma~\ref{auxbranch} $i)$. Observing that the exponential term in Lemma~\ref{auxbranch}
is bounded since $n\leq K/\log K$ and
combining these estimates yields
\eqref{tauMbranchement}.\\

We can now conclude the case $i)$
combining the two steps. More precisely
we use that for any $x,y,z\in\r,$ $(x\geq y\wedge z)\Longleftrightarrow (x\wedge y\geq z\textrm{ or }x\wedge z\geq y).$
The result follows then from  \eqref{tauMbranchement} and
  \eqref{STEP1}.
 \end{proof}

\begin{proof}[Proof of the case ii)]
 We prove a similar result under the hypothesis $\partial_R F_R(x^{\star})=0$. Let us prove
 that there exists $L>0$ such that for any $K\geq 1$ and $n\leq L.K/(\log K)^2$
	\begin{equation}
\label{wewant}	
 \pro{\underset{t\leq T^K}{\sup}~\left|X^K_R(t) - x^{\star}_R\right| \geq  \sqrt{n/K}}\leq C \sqrt{\frac{\log K}{n}},
 	\end{equation}
  where now	$T^K := T^K_{R,M}(\sqrt{n K},n)$.\\
  For that purpose,  we
use again \eqref{YKRadrien} and get  directly 
\begin{align}	
\label{inegggg}
 \underset{t\leq T^K}{\sup}~\left|X^K_R(t) - x^\star_R\right|=&\underset{s\leq T^K}{\sup}~|Y^K_R(t)|\\
 \leq& |X^K_R(0) - x^\star_R|+\underset{t\leq T^K}{\sup}~|M^K(t)|+\underset{t\leq T^K}{\sup}~|H^K_1(t)|+\underset{t\leq T^K}{\sup}~|H^K_2(t)|.\nonumber
 \end{align}
Recalling that $T_K\leq 4(\log K)/(3r_{\star})$ and proceeding as in the case $i)$, we obtain
	\begin{align}
	&\pro{\underset{t\leq T^K}{\sup} \left|M^K(t)\rr| \geq \frac{1}{2}\sqrt{\frac{n}{K}}} \leq  \ C\sqrt{\frac{K}{n}}\cdot \sqrt{\frac{\log K}{K}} =C \sqrt{\frac{\log K}{n}}, \label{etalorslamarinagle} \\
	&\underset{t\leq T^K}{\sup}~\left|H^K_1(t)\rr|+\underset{t\leq T^K}{\sup}~\left|H^K_2(t)\rr|\leq \  C \frac{n}{K} T^K 
 \leq  C \frac{n}{K}\log K= C \sqrt{\frac{n}{K}}\sqrt{\frac{n (\log K)^2}{K}}. \nonumber
	\end{align}
  For any  $n\leq K/(2C\log(K))^2$, we get
 $$\underset{t\leq T^K}{\sup}~\left|H^K_1(t)\rr|+\underset{t\leq T^K}{\sup}~\left|H^K_2(t)\rr|\leq \frac{1}{2} \sqrt{{\frac{n}{K}}}.$$
Then~\eqref{inegggg} guarantees
\begin{align*}
 \left\{\underset{t\leq T^K}{\sup}~\left|X^K_R(t) - x^{\star}_R\rr| \geq \sqrt{\frac{n}{K}}\right\}
 &\subset\left\{ \underset{t\leq T^K}{\sup}~|M^K(t)|\geq \frac{1}{2} 
\sqrt{\frac{n}{K}} \right\}
 \end{align*}
 and \eqref{etalorslamarinagle} yields \eqref{wewant} with $L=1/(4C^2)$. The proof is concluded as for 
 the case $i)$.
\end{proof}

\medskip
Theorem~\ref{mainbranch} becomes now a consequence of the two previous lemmas.
\begin{proof}[Proof of Theorem~\ref{mainbranch}]
We recall that $s_K=4(\log K)/(3r_\star)$ and  in the case $i)$ we  use
	\begin{align*}
	&\pro{\underset{t\leq T^K_M(N^K(0),n)}{\sup} \left|\frac{N^K_M(t)}{Z(t)}-1 \right| >\eta\, ; \, W>0}\\
	& \qquad \qquad \qquad \leq
 \pro{T^K_M(N^K(0),n)>T^K_R(N^K(0),Ln)\wedge s_K, W>0} \\
&\qquad \qquad \qquad \qquad \qquad \qquad	+ \pro{\underset{t\leq T^K_{R,M}(Ln,n)}{\sup}\left|\frac{N^K_M(t)}{Z(t)} -1 \right|>\eta\, ; \, W>0}.
	\end{align*}
The two terms of the right hand side  can be controlled using respectively Lemmas~\ref{controlRbranch} to choose $L$ and~\ref{auxbranch}. This completes the proof for $i)$, and $ii)$ is proved similarly.
 \end{proof}

\section{Approximation by dynamical systems}\label{sectiondyn}

The goal of this section is to approximate the process~
$X^K=(X^K_R,X^K_M)$ defined in Section~\ref{sectionmodel} by the dynamical system  defined by~\eqref{def:dynsys} under degenerate initial conditions. Usually such an approximation is proved when the initial conditions are of order of magnitude one (cf. for example  \cite{ethierkurtz}). Here, 
   we focus on  a small deterministic initial density  of mutants,  $1/K\ll x_M=N^K_M(0)/K\ll 1$ and an initial density of residents $x_R=N^K_R(0)/K$ close to the equilibrium value $x_R^\star$. The initial mutant density  may be very small regarding $K$ but we still enjoy a large number of mutants allowing to compare the stochastic process to its expected deterministic behavior.
Starting from such sub-macroscopic level of mutants,   the time for them to reach macroscopic levels
will be of order $\log K$.  We will compare the density  process and its deterministic approximation on this invasion time scale. They  are both small at the beginning and we will show that their ratio remains close to $1$ on the full time window allowing to reach macroscopic density of mutants.\\

This section emphasizes the role of (small) initial conditions and we introduce the flow notation  for convenience.
The process  $X^K(x,.)$ started from $x$  is defined 
 for any $K$ and time $t$ by
\begin{align}
X^K_\bullet(x,t)=&x_\bullet + \frac{1}{K}\int_{[0,t]\times\r_+}\uno{u\leq KX^K_\bullet(x,s-) b_\bullet(X^K(x,s-))}\N^b_\bullet(ds,du)\label{def:stochflow}\\
&\qquad - \frac{1}{K}\int_{[0,t]\times\r_+}\uno{u\leq KX^K_\bullet(x,s-) d_\bullet(X^K(x,s-))}\N^d_\bullet(ds,du)\nonumber
\end{align}
 for $\bullet\in\{R,M\}$.
The flow
$\phi(x,t)=(\phi_R(x,t),\phi_M(x,t))$ is the solution of \eqref{def:dynsys} starting from the initial value $x$, i.e. the unique solution of
$$ \phi(x,0)=x, \quad \frac{\partial}{\partial t} \phi(x,t)=G(\phi(x,t)), \quad \text{where} \quad G_{\bullet}(x)=x_{\bullet}(b_{\bullet}(x)-d_{\bullet}(x)).$$
In this section we  assume that  the  flow $\phi$ satisfies a boundedness assumption: the solution remains in a compact set if it starts close to the equilibrium $x^{\star}$.\\

\begin{hyp}
\label{hypdom}
There exists a compact domain $\mathcal{D}$  
of $\r_+^2$  
and $\varepsilon>0$ such that
$$   (x_R^{\star}-\varepsilon,x_R^{\star}+\varepsilon)\times [0,\varepsilon) \subset  \mathcal{D} \quad \text{ and }
\quad \{ \phi(x,t) : x \in \mathcal{D}, t\geq 0 \} \text{
is bounded.}$$
\end{hyp}



\subsection{Properties of the dynamical system and  hitting times}\label{defvstar}

Let us  consider the hitting time  for the mutant population starting from $x$
 $$\tau_M(x,v)=\inf\{ t\geq 0 :  \phi_M(x,t)=v\},$$
 where by convention $\inf \varnothing =\infty$ and in that case the hitting time is infinite.
 The following result quantifies the hitting time $\tau_M(x,v)$ as a function of the two variables~$x$ and~$v$, starting close to the equilibrium  $x^\star=(x^\star_R, 0)$.
 Let us define, for  $\eta>0,$
$$\mathcal C(x,\eta)=\sup\{ \phi_M(x,t)\,  : \, t\leq \overline{\tau}_M(x,\eta)\},$$
where $\overline{\tau}_M(x,\eta)$ is the first time when the growth rate of mutants is smaller than $\eta$:
$$\overline{\tau}_M(x,\eta)=\inf\{t \geq 0 : F_M(\phi(x,t))\leq \eta\}.$$
We set
$$v_\eta =\lim_{r \downarrow 0}\, \inf_{  x\in B_r(x^\star)\cap (\R_+^*)^2} \mathcal C(x,\eta),$$
where $B_r(x^\star)$ denotes the ball of radius $r$ centered in $x^\star$.
Observing that {$v_\eta$ is increasing as $\eta$ decreases}, we let $\eta$ go to zero and define    $v_{\star}$ as the upper-bound of the values 
of mutants which can be reached in the increasing phase, starting  from the neighborhood of the equilibrium:
$$v_\star := \lim_{\eta \downarrow 0}\,v_\eta.$$

The next result  guarantees that $v_\star$ is positive and finite. It also quantifies the hitting times $\tau_M(x,v)$ when $x$ goes close to $x^{\star}$ and $v$ is smaller than $v_{\star}$. 
 \begin{prop}\label{tKv} Under Assumptions~\ref{assumpt} and~\ref{hypdom},
the value $v_\star$ is positive and finite.\\
Besides there exists    a continuous increasing  function $\tau : [0,v_{\star})\rightarrow \R_+$ such that $\tau(0)=0$ and
for any $\bar v\in (0,v_{\star})$,
$$\lim_{ x \rightarrow x^{\star}} 
 \sup_{v\in (x_M,\bar v]}\big\vert \tau_M(x,v) - \frac{1}{r_{\star}}\log(v/x_M) -\tau(v)\big\vert=0,$$
 where the limit is taken for $x\in (\R_+^*)^2$ if $\partial_R F_R(x^{\star})<0,$ and for 
 $$x\in D(\eta) = \left\{(x_R,x_M) \in (\R_+^{*})^2~:~|x_R - x^{\star}_R|\log(1/x_M)\leq\eta\right\}$$
 for some $\eta>0$ if  $\partial_R F_R(x^{\star})=0$.
 \end{prop}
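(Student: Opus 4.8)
\textbf{Proof plan for Proposition~\ref{tKv}.}
The plan is to reduce the study of $\tau_M(x,v)$ to two regimes: an initial \emph{escape} phase, where $\phi_M$ is tiny and the linearization of the dynamical system around $x^\star$ governs the motion, and a \emph{macroscopic} phase, where $\phi_M$ is of order one and standard regularity of the flow applies. In the escape phase, along the invariant manifold $\{x_M=0\}$ the resident coordinate converges to $x_R^\star$, so that $F_M(\phi(x,s))\to F_M(x^\star)=r_\star$; writing $\frac{d}{dt}\log\phi_M(x,t)=F_M(\phi(x,t))$ and integrating, one gets $\log\phi_M(x,t)=\log x_M + r_\star t + \int_0^t\big(F_M(\phi(x,s))-r_\star\big)ds$. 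The idea is to show the correction integral converges, as $x\to x^\star$ and then $t\to\infty$, to a finite quantity that will constitute (most of) the function $\tau$; inverting this relation gives $\tau_M(x,v)=\frac1{r_\star}\log(v/x_M)+\tau(v)+o(1)$ for $v$ in the increasing phase. I would define $\tau(v)$ intrinsically: roughly, $\tau(v)=\lim_{x\to x^\star}\big(\tau_M(x,v)-\frac1{r_\star}\log(v/x_M)\big)$, then check this limit exists, is finite, continuous, increasing on $[0,v_\star)$, and vanishes at $0$; monotonicity and $\tau(0)=0$ follow because $\tau_M(x,\cdot)$ is increasing and $\tau_M(x,x_M)=0$.

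First I would establish the $\partial_RF_R(x^\star)<0$ case, where $x^\star$ is a hyperbolic (stable along $\{x_M=0\}$, unstable transversally) fixed point. Here I would use the stable-manifold / Hartman--Grobman type control: trajectories started at $x\in(\R_+^*)^2$ near $x^\star$ first contract toward a neighborhood of the unstable manifold while $x_M$ stays exponentially small, and the time spent with $|F_M(\phi(x,s))-r_\star|>\delta$ is $O(\log(1/|x-x^\star|))$ but the \emph{integral} $\int_0^\infty|F_M(\phi(x,s))-r_\star|\,ds$ stays bounded and in fact converges as $x\to x^\star$, because $|F_M(\phi(x,s))-r_\star|\le C(|\phi_R(x,s)-x_R^\star|+\phi_M(x,s))$ and both terms decay geometrically until $\phi_M$ escapes. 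Combined with continuous dependence of the flow on initial data on compact time intervals (and Assumption~\ref{hypdom} to keep everything in a fixed compact set, so $F_M$ is Lipschitz there), this yields the uniform-in-$v$ convergence on $(x_M,\bar v]$ for each fixed $\bar v<v_\star$. The quantities $v_\eta$ and $v_\star$ enter precisely to guarantee that for $v<v_\star$ one can choose $\eta>0$ with $v<v_\eta$, so that the hitting of level $v$ occurs strictly before $\overline\tau_M(x,\eta)$, i.e. while $F_M>\eta>0$ and $\phi_M$ is still strictly increasing; this makes $\tau_M(x,v)$ finite and well-defined and pins down the monotone branch. Finiteness of $v_\star$ should come from Assumption~\ref{hypdom}: the flow stays bounded, $F_M$ is continuous, and if mutants could grow unboundedly in the increasing phase from arbitrarily close to $x^\star$ one contradicts boundedness or the structure of the vector field; positivity of $v_\star$ is immediate since $r_\star>0$ forces $\phi_M$ to increase initially by a fixed amount.

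Next I would treat the degenerate case $\partial_RF_R(x^\star)=0$, where exponential transversal stability along $\{x_M=0\}$ is lost. This is where I expect the main obstacle. One can no longer claim $|\phi_R(x,s)-x_R^\star|$ decays geometrically; instead the resident coordinate may drift on a slow (e.g. algebraic) time scale, and a rank-one perturbation argument or a center-manifold expansion is needed to show $|\phi_R(x,s)-x_R^\star|$ stays controlled over the escape time $\sim\frac1{r_\star}\log(1/x_M)$. This is exactly why the statement restricts $x$ to the cuspidal region $D(\eta)=\{|x_R-x_R^\star|\log(1/x_M)\le\eta\}$: the product $|x_R-x_R^\star|\log(1/x_M)$ is the natural bound on the accumulated drift of $F_M$ over the escape phase, so on $D(\eta)$ the correction integral $\int_0^t(F_M(\phi(x,s))-r_\star)\,ds$ remains $O(\eta)$ plus a convergent piece, and the same inversion argument goes through uniformly in $v\in(x_M,\bar v]$. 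Concretely I would (i) quantify $\partial_s(\phi_R(x,s)-x_R^\star)$ using $F_R(x_R^\star,0)=\partial_RF_R(x^\star)=0$ and a second-order Taylor expansion, bounding it by $C(|\phi_R-x_R^\star|^2+\phi_M)$, then Gronwall on the escape interval to get $\sup_{s\le \overline\tau_M(x,\eta)}|\phi_R(x,s)-x_R^\star|\le C\eta/\log(1/x_M)$ on $D(\eta)$; (ii) deduce $\int_0^{\overline\tau_M(x,\eta)}|F_M(\phi(x,s))-r_\star|\,ds\le C\eta + C\int \phi_M(x,s)ds = C\eta+o(1)$; (iii) after the escape time, $\phi_M$ is of order one, the trajectory enters a fixed compact away from $\{x_M=0\}$, and ordinary continuous dependence finishes the estimate. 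Assembling the escape-phase and macroscopic-phase estimates and letting first $x\to x^\star$ (within $D(\eta)$) and then $\eta\downarrow0$ yields the claimed uniform limit and simultaneously shows $\tau$ is well-defined independently of $\eta$; continuity and monotonicity of $\tau$ on $[0,v_\star)$ then follow from those of $v\mapsto\tau_M(x,v)$ together with the uniform convergence.
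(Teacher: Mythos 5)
Your proposal follows essentially the same route as the paper's proof. The paper's key computation is to change variables $u=\phi_M(x,t)$ and write
$$\tau_M(x,v)=\int_{x_M}^{v}\frac{du}{u\,F_M(\phi_R(x,\tau_M(x,u)),u)}
= \frac{1}{r_\star}\log(v/x_M)+\mathcal R_v(x),\quad
\mathcal R_v(x)=\int_{x_M}^v\frac1u\Bigl(\frac{1}{F_M}-\frac{1}{r_\star}\Bigr)du,$$
and your time-domain identity $\log\phi_M(x,t)=\log x_M+r_\star t+\int_0^t(F_M(\phi(x,s))-r_\star)\,ds$, evaluated at $t=\tau_M(x,v)$, gives after rearranging $\tau_M(x,v)=\frac1{r_\star}\log(v/x_M)-\frac1{r_\star}\int_0^{\tau_M(x,v)}(F_M-r_\star)\,ds$; the substitution $du=uF_M\,ds$ shows these two correction terms are literally equal, so you and the paper are computing the same object in two parametrizations. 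The supporting ingredients also match: your Gronwall/Taylor bounds $|\phi_R(x,s)-x_R^\star|\le C(|x_R-x_R^\star|+\phi_M)$ (hyperbolic case) and $\le C(|x_R-x_R^\star|^2+\phi_M)$ with the $D(\eta)$ restriction (degenerate case) are the content of the paper's Lemma~\ref{xKRx*R}; your use of compactness plus $r_\star>0$ to get $0<v_\star<\infty$ is Lemma~\ref{lemmeetav}; your appeal to Hartman--Grobman for uniform continuity of $(x,u)\mapsto F_M(\phi_R(x,\tau_M(x,u)),u)$ matches the paper (with the caveat that in the partially hyperbolic case the paper invokes Takens' normal form rather than a center-manifold expansion, a distinction worth keeping in mind since Hartman--Grobman requires hyperbolicity). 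Your escape-phase/macroscopic-phase split corresponds to the paper's splitting $\tau_M(x,v)=\tau_M(x,u_1)+\tau_M(x,u_1,v)$. Minor points to tighten if you write this out: you assert monotonicity and $\tau(0)=0$ ``follow because $\tau_M(x,\cdot)$ is increasing and $\tau_M(x,x_M)=0$'', but strictly the monotonicity of $\tau$ comes from the representation $\tau(v)-\tau(v')=\lim_{x\to x^\star}\int_{v'}^v\frac1u(\frac{1}{F_M}-\frac1{r_\star})du$, which requires knowing the integrand is nonnegative; and the convergence of $\mathcal R_v(x)$ as $x\to x^\star$ needs the uniform Cauchy criterion rather than a direct dominated-convergence argument, since the domain $[x_M,v]$ varies with $x$.
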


{The role of the set~$D(\eta)$ is to guarantee a suitable control of the resident population. Indeed, as $x_M$ decreases, the time to reach~$v$ increases (since it is of order~$\log(v/x_M)$), and so $x_R$ needs to be closer to~$x^\star_R$ to compensate it.}

The proof of this proposition necessitates some lemmas, which  guarantee the resident process
to remain close to its equilibrium value as long as 
the mutant process remains small. 



\medskip\begin{minipage}{\textwidth}
\begin{lemme}\label{xKRx*R}$ $
\begin{itemize}
\item[i)] If $\partial_R F_R(x^\star)<0,$ then there exist $u_0\in (0, 1)$ and $C>0$ such that
	for any $u \leq u_0$ and  $x = (x_R,x_M)\in B_u(x^\star)$,
	$$\underset{t\leq \tau_M(x,u)}{\sup}|\phi_R(x,t) - x^\star_R| \leq Cu.$$
\item[ii)]	If $\partial_R F_R(x^\star) = 0,$ then there exist  $u_0\in (0,1)$ and $\eta_0>0$ such that, for all $u\leq u_0$ and  $x\in {B}_u(x^\star)\cap D(\eta_0),$
	$$\underset{t\leq\tau_M(x,u)}{\sup}|\phi_R(x,t) - x^\star_R|\leq C u.$$
\item[iii)] In both cases, for all $u\leq u_0$ and  $x\in {B}_u(x^\star)\cap D(\eta_0),$
	$$\tau_M(x,u) \leq \frac{2}{r_\star}\log(1/x_M).$$
 \end{itemize}
\end{lemme}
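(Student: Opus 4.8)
The plan is to reduce both parts to a differential inequality for the resident deviation $Y_R(t):=\phi_R(x,t)-x_R^{\star}$ on a fixed small neighbourhood of $x^{\star}$, and then close the estimate by a bootstrap/exit-time argument; throughout we may assume $x_M>0$ (the case $x_M=0$ in i) being the special case $\phi_M\equiv 0$ of the argument below). Fix once and for all a model-dependent radius $\delta>0$ such that $F_M\ge r_{\star}/2$ on $\bar{B}_\delta(x^{\star})\cap\R_+^2$ and such that the $C^2$ Taylor expansion of $x_R F_R$ at $x^{\star}$ holds there with a uniform remainder. Since $F_R(x^{\star})=0$, as long as $\phi(x,\cdot)$ remains in $\bar{B}_\delta(x^{\star})$ one has, for some model constant $C>0$,
\[
Y_R'(t)=\partial_R G_R(x^{\star})\,Y_R(t)+R(t),\qquad |R(t)|\le C\big(\phi_M(x,t)+Y_R(t)^2\big),
\]
where $\partial_R G_R(x^{\star})=x_R^{\star}\,\partial_R F_R(x^{\star})$, the mixed and higher-order terms being absorbed using $\phi_M(x,t)\le u$ and $|Y_R(t)|\le\delta$. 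Moreover $\phi_M(x,\cdot)$ is then strictly increasing with $\phi_M(x,t)\ge x_M e^{r_{\star}t/2}$, so $\phi_M$ reaches $u$ by time $\tfrac{2}{r_{\star}}\log(u/x_M)\le\tfrac{2}{r_{\star}}\log(1/x_M)$ (using $u\le u_0<1$); this yields iii) in both cases, once the bootstrap below confirms the trajectory stays in $\bar{B}_\delta(x^{\star})$ until $\phi_M$ hits $u$.

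For part i), $\lambda:=\partial_R G_R(x^{\star})<0$. Let $T^{*}$ be the exit time of $\bar{B}_\delta(x^{\star})$, put $T^{**}:=T^{*}\wedge\tau_M(x,u)$ and $m:=\sup_{[0,T^{**}]}|Y_R|$. The variation-of-constants formula together with the bound on $R$ (and $\int_0^t e^{\lambda(t-s)}\,ds\le|\lambda|^{-1}$, $\phi_M\le u$) gives $m\le|Y_R(0)|+\tfrac{C}{|\lambda|}(u+m^2)$. Since $|Y_R(0)|=|x_R-x_R^{\star}|\le u$ because $x\in B_u(x^{\star})$, and $m\le\delta$, choosing $\delta$ small enough that $C\delta/|\lambda|\le\tfrac12$ yields $m\le C_0u$ for a model constant $C_0$. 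Finally $\phi_M(x,T^{**})\le u<\delta$, so if $T^{**}=T^{*}$ then $|Y_R(T^{*})|=\delta$, contradicting $m\le C_0u_0<\delta$ for $u_0$ small enough; hence $T^{**}=\tau_M(x,u)\le T^{*}$ and the bound holds on all of $[0,\tau_M(x,u)]$.

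Part ii) is the delicate one: now $\partial_R F_R(x^{\star})=0$, so $|Y_R'|\le C(Y_R^2+\phi_M)$ with no damping, and the quadratic self-interaction alone could make $Y_R$ blow up over a time interval of length of order $\log(1/x_M)$ — this is the main obstacle, and precisely what the region $D(\eta_0)$ is designed to prevent. The device I would use is to change the time variable to $v=\phi_M(x,s)$ on $\bar{B}_\delta(x^{\star})$ (where $\phi_M$ is an increasing bijection onto its range): writing $\tilde Y(v):=Y_R(s(v))$ and using $F_M\ge r_{\star}/2$ turns the inequality into the Riccati-type bound
\[
|\tilde Y'(v)|\le c\Big(\frac{\tilde Y(v)^2}{v}+1\Big),\qquad c:=\frac{2C}{r_{\star}},\qquad \tilde Y(x_M)=x_R-x_R^{\star}.
\]
Setting $a:=|x_R-x_R^{\star}|$ and $h(v):=\sup_{[x_M,v]}|\tilde Y|$, I would bootstrap the ansatz $h(v)\le 2(a+cv)$: integrating and using $(a+cw)^2\le 2a^2+2c^2w^2$ gives $h(v)\le a+8ca^2\log(v/x_M)+4c^3v^2+cv$, and then the constraint $x\in D(\eta_0)$, i.e. $a\log(1/x_M)\le\eta_0$, makes $8ca^2\log(v/x_M)\le 8c\eta_0\,a\le a/2$ for $\eta_0$ small, while $4c^3v^2\le cv/2$ for $v\le u_0$ small, so $h(v)\le 2(a+cv)$ and the bootstrap closes. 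Evaluating at $v=\phi_M(x,T^{**})\le u$ and using $a\le u$ (again from $x\in B_u(x^{\star})$) gives $\sup|Y_R|\le 2(1+c)u$; the same exit-time argument as in i), together with $\tau_M(x,u)\le\tfrac{2}{r_{\star}}\log(u/x_M)$ (which gives iii)), shows the trajectory cannot leave $\bar{B}_\delta(x^{\star})$ before $\phi_M$ reaches $u$, so $\tau_M(x,u)\le T^{*}$ and the estimate holds on $[0,\tau_M(x,u)]$.
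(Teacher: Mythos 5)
Your proof is correct, but it does not retrace the paper's steps; it is worth recording the differences. First, you track $Y_R(t)=\phi_R(x,t)-x^{\star}_R$ (deviation from the equilibrium), whereas the paper works with $y_R(t)=\phi_R(x,t)-x_R$ (deviation from the starting point) and keeps the offset $x_R-x^{\star}_R$ as a separate source term; your choice folds that offset into the initial value $Y_R(0)=x_R-x^{\star}_R$, which is slightly cleaner and gives you the bound $|R(t)|\le C(\phi_M+Y_R^2)$ directly. Second, for~$i)$ you linearize at the constant rate $\lambda=\partial_R G_R(x^{\star})<0$ and close the estimate via the elementary bound $\int_0^t e^{\lambda(t-s)}\,ds\le|\lambda|^{-1}$; the paper instead keeps the time-dependent damping $\phi_R(x,s)\partial_R F_R(x_R,\phi_M(x,s))$ and invokes its Lemma~\ref{lemmeadrien}, a discrete-time Gronwall bound controlling $\sup_s|H(s)-H(\lfloor s\rfloor)|$ which the paper also needs for the stochastic estimates of Section~2 -- so the paper's route is less direct here but serves a unifying purpose. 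Third, for~$ii)$ you straighten the flow by passing to the time variable $v=\phi_M(x,s)$, which converts the dangerous $\log(1/x_M)$-long time interval into the bounded $v$-interval $[x_M,u]$ and produces a Riccati-type inequality $|\tilde Y'(v)|\le c(\tilde Y(v)^2/v+1)$ that you close by bootstrapping the ansatz $h(v)\le 2(a+cv)$; the paper achieves the same effect in $t$-time by the identity $\int_0^t\phi_M(x,s)\,ds\le\tfrac2{r_\star}\phi_M(x,t)$ together with a stopping time at which the quadratic self-interaction would become dominant. These are two realizations of the same idea (exponential growth of $\phi_M$ tames the time integral), and both exploit $x\in D(\eta_0)$ identically to bound the term $a^2\log(1/x_M)$ by $\eta_0 a$. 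The argument for~$iii)$ is essentially identical in both write-ups. One small imprecision worth fixing: at the exit time $T^*$ of $\bar B_\delta(x^{\star})$ the point lies on the Euclidean sphere, so one only gets $|Y_R(T^*)|\ge\sqrt{\delta^2-u^2}$ (not $=\delta$) since $\phi_M(x,T^*)\le u$ may be positive; this still contradicts $m\le C_0u_0$ once $u_0$ is small enough (for instance $C_0u_0<\delta/2$), so the conclusion is unaffected, but the equality as written is not correct.
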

\end{minipage}

\begin{proof}
For convenience we introduce the gap between the flow with its initial position for the resident population dynamics and the first time when the flow leaves~$B_r(x^{\star})$ (for any~$r>0$):
\begin{equation}\label{srx}
y_R(t) := \phi_R(x,t) - x_R, \qquad s_r(x):=\inf\{t \geq 0: \phi(x,t) \not\in  B_r(x^{\star})\}.
\end{equation}

{Note that, for the sake of notation, the initial condition~$x$ is not written explicitly in the quantity~$y_R(t)$ which still depends on it.}

To begin with, let us prove that, for~$r>0$ small enough,
\begin{equation}\label{srinflog}
    s_r(x)\leq \frac2{r_\star}\log(1/x_M).
\end{equation}

Recalling that $F_M(x^\star)=r_\star>0$, we have, for $r$ small enough and $t\leq s_r(x)$,
$$\partial_t\phi_M(x,t) =  \phi_M(x,t) F_M(\phi(x,t))\geq \phi_M(x,t)\frac{r_\star}{2}.$$

Whence, for all $t\leq s_r(x)$,
$$\phi_M(x,t) \geq x_M e^{t\cdot r_\star/2}.$$

The inequality above implies that $s_r(x)$ is finite. Then as $t\mapsto\phi_M(x,t)$ is continuous, by  choosing~$r<1$, we have
$$x_M e^{s_r(x)\cdot r_\star/2}\leq \phi_M(x,s_r(x)) < 1,$$
which implies
$$s_r(x)\leq \frac{2}{r_\star} \log(1/x_M).$$

Now let us prove~$i)$. As $\partial_R F_R(x^{\star})<0$, we can find $r>0$ such that for any $x\in B_r(x^{\star})$,
\begin{equation}
 \label{majorenegatif}
 \partial_R F_R(x)\leq \partial_R F_R(x^{\star})/2<0.
 \end{equation}

Let us use that by definition of the flow $\phi_R$
\begin{align}
\label{decompp}
y_R(t) = \int_0^t \phi_R(x,s)F_R(\phi(x,s))ds 
&=H(t) +\int_0^t \phi_R(x,s)y_R(s)\partial_R F_R(x_R,\phi_M(x,s))ds, 
\end{align}
where 
\begin{align*}
H(t)=\int_0^t \phi_R(x,s)(F_R(\phi(x,s))-y_R(s)\partial_R F_R(x_R,\phi_M(x,s)))ds.
\end{align*}
Recalling Assumption \ref{assumpt} (R) and
using Taylor expansion, there exists $C_r>0$ such that 
$$\big\vert F_R(\phi(x,s))-F_R(x_R,\phi_M(x,s))-y_R(s)\partial_R F_R(x_R,\phi_M(x,s))\big\vert \leq 
C_r \vert y_R(s)\vert^2\  $$
for $s\leq s_r(x)$. Using that $F_R(x_{\star})=0$, we get  also
$$\vert F_R(x_R,\phi_M(x,s))\vert { = \vert F_R(x_R,\phi_M(x,s)) - F_R(x^\star_R,0)\vert} \leq C_r \left(\vert x_R-x_R^{\star}\vert +\vert \phi_M(x,s)\vert\right)$$
and, combining these two bounds,
$$\sup_{t\leq T\wedge s_r(x)} \vert H(t)-H(\lfloor t\rfloor)\vert \leq C_r \left(\vert x_R-x_R^{\star}\vert+  \sup_{t\leq T\wedge s_r(x)}\left(\vert \phi_M(x,s)\vert+ \vert y_R(s)\vert^2\right)\right).$$
Thanks to~\eqref{majorenegatif}, we can use  Lemma~\ref{lemmeadrien} for \eqref{decompp}
and we get, for any $T>0,$
\begin{align}
\underset{t\leq T\wedge s_r(x)}{\sup}\left|y_R(t)\right| \leq& C_r\left( \underset{t\leq s_r(x)\wedge T}{\sup}\left|H(t) - H(\lfloor t\rfloor)\right| \right)\label{step11}\\
\leq& C_r \left(\vert x_R-x_R^{\star}\vert+  \sup_{t\leq T\wedge s_r(x)}\left(\vert \phi_M(x,s)\vert+ \vert y_R(s)\vert^2\right)\right). \nonumber
\end{align}
{In the rest of the proof, the value of~$C_r$ is fixed.} We work now for times when $y_R$ is also smaller that $1/(2C_r)$ and introduce
$$\overline{s}_r(x)=\inf\left\{t \geq 0 : \vert y_R(s)\vert\geq \frac{1}{2C_r}\right\} \wedge s_r(x).$$
Then for $s\leq \overline{s}_r(x)$,  this yields $C_r\vert y_R(s)\vert^2\leq \vert y_R(s)\vert/2$ and gathering terms with $y_R$,
\begin{align}
\underset{t\leq T\wedge \overline{s}_r(x)}{\sup}\left|y_R(t)\right| 
\leq& 2C_r \left(\vert x_R-x_R^{\star}\vert+ \sup_{t\leq T\wedge s_r(x)}\vert \phi_M(x,s)\vert \right).\nonumber
\end{align}
For any $u>0$ and  $x\in B_{u}(x^{\star}),$
\begin{equation}
\label{blabla}
 \underset{t\leq  \tau_M(x,u)\wedge \overline{s}_r(x)}{\sup}\left|y_R(t)\right| \leq  4C_r u.
 \end{equation}
Thus, setting $u_0=(r/2)\wedge r/(8C_r)\wedge 1/(16 C_r^2)$ ensures that for any $x\in B_{u_0}(x^{\star}),$
$$ \underset{t\leq  \tau_M(x,u_0)\wedge \overline{s}_r(x)}{\sup}\left|y_R(t)\right|\leq (r/2)\wedge 1/(4C_r),$$
which guarantees 
$ \overline{s}_r(x)>\tau_M(x,u_0)$.  Thus for  $u\leq u_0$, one has $ \overline{s}_r(x)>\tau_M(x,u)$ and 
\eqref{blabla} becomes
\begin{align}
\underset{t\leq  \tau_M(x, u)}{\sup}\left|y_R(t)\right| 
\leq& 
 4C_ru,\nonumber
\end{align}
since $\tau_M(x,u)\leq \tau_M(x,u_0)< \overline{s}_r(x)$. \\

 We prove now $(ii)$ and consider the case $\partial_R F_R(x^{\star})=0.$
 Recalling that the flow is bounded from Assumption~\ref{hypdom}, 
a Taylor expansion in   $x^{\star}$ yields now
 $\vert F_R(\phi(x,t)) \vert =  \vert F_R(\phi(x,t))-F_R(x^{\star})\vert 
 \leq  C(\vert \phi_R(x,t)-x_R^{\star}\vert^2+\vert \phi_M(x,t)\vert)$ and recalling that
 $y_R(t) = \phi_R(x,t) - x_R$,
\begin{align}
|y_R'(t)|= \vert \phi_R(x,t)F_R(\phi(x,t))\vert
\leq& C\left[|x_R - x^{\star}_R|^2 +y_R(t)^2+\phi_M(x,t)\right].\label{derivy}
\end{align}
Let us now introduce the first time when the quadratic contribution  may become dominant :
$$s(x)=\inf\{t \geq 0 : y_R(t)^2 \geq  |x_R - x^{\star}_R|^2 +\phi_M(x,t)\}.$$
Before this time, we can use
$|y_R'(t)|\leq 2C\left[|x_R - x^{\star}_R|^2 +\phi_M(x,t)\rr]$ and we get for $t\leq s(x)$,
\begin{align*}
|y_R(t)|=\left| \int_0^t y'_R(s)\right| 
&\leq C 
\left[|x_R - x^{\star}_R|^2 t+ 
\int_0^{t} \phi_M(x,s) ds\rr].
\end{align*}

Let us use again the notation $s_r(x)$ introduced at~\eqref{srx}, and fix $r>0$ to some small enough value such that for $t\leq s_r(x)$, $F_M(\phi(x,t))\geq r_\star/2$. Hence, for $t\leq s_r(x)$,
\begin{align}
\label{intphi}
\int_{0}^{t} \phi_M(x,s)ds &= \int_0^{t} \partial_s\phi_M(x,s)\frac{1}{F_M(\phi(x,s))}ds \leq \frac{2}{r_{\star}} \int_0^{t}\partial_s\phi_M(x,s)ds \leq \frac{2}{r_{\star}} \phi_M(x,t).
\end{align}
 We obtain for $t\leq s(x)\wedge s_r(x)$ (using~\eqref{srinflog}),
\begin{align}
\label{majy}
|y_R(t)|=\big\vert \int_0^t y_R'(s)\big\vert 
&\leq C 
\left[|x_R - x^{\star}_R|^2 \, \log(1/x_M)+ 
 \phi_M(x,t) \rr].
\end{align}
{From now the value of~$C$ is fixed and assumed to be greater than one (possibly by increasing it) and we consider $\eta_0:=1/(4C^2)$}. Let $x\in D:=D(1/(4C^2))=\{(x_R,x_M) : |x_R - x^{\star}_R| \, \log(1/x_M)\leq 1/(4C^2)\}$ and $t\leq \tau_M(x,1/(4C^2))\wedge s(x)\wedge s_r(x)$ such that
$\phi_M(x,t)\leq 1/(4C^2)$. We get
\begin{align*}
|y_R(t)|^2
&\leq 2C^2 
\left[(|x_R - x^{\star}_R| \, \log(1/x_M)^2|x_R - x^{\star}_R|^2+  \phi_M(x,t)^2
 \rr]
 \leq \frac{1}{2}\left[|x_R - x^{\star}_R|^2+  \phi_M(x,t)
 \rr].
\end{align*}
This implies that for any $x\in D$, $s(x)>\tau_M(x,1/(4C^2))\wedge s_r(x)$ and~\eqref{majy}
becomes
\begin{align*}
\sup_{t\leq \tau_M(x,1/(4C^2))\wedge s_r(x)}|y_R(t)|
&\leq C 
\left[|x_R - x^{\star}_R|+ 
 \phi_M(x,t) \right].
\end{align*}

Finally choosing some $u_0\leq 1/(4C^2)\wedge r/(4C),$ the inequality above implies that $\tau_M(x,u_0) < s_r(x)$, which proves the result.\\

{\it Proof of iii).} A straightforward consequence of~$i)$ and~$ii)$ is that, in both cases, it is possible to define, for any $r_0>0$ small enough, some~$u_0>0$ small enough such that for all~$u\leq u_0$,
$$\tau_M(x,u)\leq s_{r_0}(x).$$
Then~\eqref{srinflog} gives the result.
\end{proof}

{The following lemma is not used in the proof of Proposition~\ref{tKv}. However, since it is similar to the previous one (guaranteeing a control of the cumulated gap between the resident population dynamics and its equilibrium), we state and prove it here.}
\begin{lemme}\label{xKRx*Rdeux}
i)	Assume that $\partial_R F_R(x^\star)<0.$ Then there exist  $u_0\in (0, 1)$ and $\eta_0>0$ such that for all $u\leq u_0,\eta\leq\eta_0$ and  $x\in {B}_u(x^\star)\cap D(\eta),$
	$$\int_0^{\tau_M(x,u)}\underset{s\leq t}{\sup}~|\phi_R(x,s) - x^\star_R| \, dt \leq C(u + \eta).$$
ii)	Assume that $\partial_R F_R(x^\star)=0.$ Then there exist some $u_0\in (0,1)$ and $\eta_0>0$ such that for all $u \leq u_0$, and $\eta\leq\eta_0$ and  $x\in {B}_u(x^\star)\cap D(\eta),$
	$$\int_0^{\tau_M(x,u)}\underset{s\leq t}{\sup}~|\phi_R(x,s) - x^\star_R| \, dt \leq C(u + \eta^2).$$
\end{lemme}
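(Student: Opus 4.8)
The plan is to mimic the structure of the proof of Lemma~\ref{xKRx*R}, but to integrate the bounds in time rather than just take a supremum. The key observation is that in both cases, the proof of Lemma~\ref{xKRx*R} already produces a \emph{pointwise-in-time} estimate on $|y_R(t)|$ involving a term coming from the initial gap $|x_R - x_R^\star|$ and a term controlled by $\phi_M(x,t)$, plus (in case $i)$) a quadratic remainder. The new content here is that $\int_0^{\tau_M(x,u)}\phi_M(x,t)\,dt$ is finite and small, which is exactly the content of inequality~\eqref{intphi}: for $t\leq s_r(x)$ (a window which, by Lemma~\ref{xKRx*R}.iii), contains $[0,\tau_M(x,u)]$ once $u$ is small enough) one has $\int_0^t \phi_M(x,s)\,ds \leq (2/r_\star)\phi_M(x,t) \leq (2/r_\star)u$. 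Hence the "$\phi_M$ part" contributes $O(u)$ to the cumulated integral after one further integration over $t\in[0,\tau_M(x,u)]$, provided $\tau_M(x,u)$ itself is $O(\log(1/x_M))$, again by Lemma~\ref{xKRx*R}.iii).

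First, in case $i)$ ($\partial_R F_R(x^\star)<0$), I would re-run the argument giving~\eqref{blabla}: on the window $[0,\tau_M(x,u)]$ one has $\sup_{s\le t}|y_R(s)|\leq 4C_r u$, and more precisely, tracking the decomposition~\eqref{decompp} and Lemma~\ref{lemmeadrien}, one gets a bound of the form $\sup_{s\le t}|y_R(s)| \leq C_r\big(|x_R-x_R^\star| + \sup_{s\le t}\phi_M(x,s) + \sup_{s\le t}|y_R(s)|^2\big)$, and after absorbing the quadratic term (legitimate since $|y_R|\le 1/(2C_r)$ on this window), $\sup_{s\le t}|y_R(s)| \leq 2C_r(|x_R-x_R^\star| + \phi_M(x,t))$, where I used that $\phi_M(x,\cdot)$ is increasing up to $\tau_M(x,u)$. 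Integrating this in $t$ over $[0,\tau_M(x,u)]$: the first term yields $|x_R-x_R^\star|\cdot\tau_M(x,u) \leq |x_R-x_R^\star|\cdot(2/r_\star)\log(1/x_M) \leq (2/r_\star)\eta$ on $D(\eta)$, and the second yields $\int_0^{\tau_M(x,u)}\phi_M(x,t)\,dt \leq (2/r_\star)\phi_M(x,\tau_M(x,u)) = (2/r_\star)u$ by~\eqref{intphi}. This gives the claimed bound $C(u+\eta)$.

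Second, in case $ii)$ ($\partial_R F_R(x^\star)=0$), I would start from the pointwise bound~\eqref{majy}, valid on $[0,\tau_M(x,1/(4C^2))\wedge s_r(x)]$, which (shrinking $u_0$ so this window contains $[0,\tau_M(x,u)]$) reads $|y_R(t)| \leq C(|x_R-x_R^\star|^2\log(1/x_M) + \phi_M(x,t))$. Note this is already $\sup_{s\le t}|y_R(s)|$ up to the monotonicity of $\phi_M$. Integrating over $t\in[0,\tau_M(x,u)]$: the first term gives $C|x_R-x_R^\star|^2\log(1/x_M)\cdot\tau_M(x,u) \leq C|x_R-x_R^\star|^2(\log(1/x_M))^2 \cdot(2/r_\star) \leq (2C/r_\star)\eta^2$ on $D(\eta)$, using $\tau_M(x,u)\le (2/r_\star)\log(1/x_M)$; and the second term again gives $\leq (2/r_\star)u$ via~\eqref{intphi}. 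Summing produces the bound $C(u+\eta^2)$.

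The only mildly delicate point — and the one I would state carefully — is the choice of the constants $u_0,\eta_0$ so that all the windows align: one needs $\tau_M(x,u)\le s_r(x)$ and $\tau_M(x,u)\le s(x)$ (in case $ii)$) and $|y_R|\le 1/(2C_r)$ throughout, for all $x\in B_u(x^\star)\cap D(\eta)$ with $u\le u_0$, $\eta\le\eta_0$; but all of this is already furnished by the proof of Lemma~\ref{xKRx*R} (its parts $i)$, $ii)$ and $iii)$), so no genuinely new obstacle arises. In short: no new difficulty, just one extra time-integration of estimates already obtained, combined with the elementary bound $\int_0^{\tau_M(x,u)}\phi_M(x,t)\,dt\le (2/r_\star)u$ coming from $F_M(\phi(x,t))\ge r_\star/2$ on the relevant window.
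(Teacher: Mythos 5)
Your argument follows essentially the same route as the paper, with two stylistic variations that are both fine. In case~$i)$, the paper keeps the quadratic term $z(t)^2$ inside the integral inequality, integrates, and absorbs it afterwards (obtaining $\int z \le C(\eta + u + u\int z)$); you absorb it pointwise first, using the same argument as in the proof of Lemma~\ref{xKRx*R}, and integrate the resulting linear bound — equivalent. In case~$ii)$, the paper re-derives from~\eqref{derivy} and invokes the double-integral inequality $\int_0^T\int_0^t z(s)^2\,ds\,dt \le (\int_0^T z)^2$, which leads to a quadratic inequality in $\int z$; you instead reuse the already-absorbed pointwise bound~\eqref{majy} and integrate directly, which is simpler and skips that step. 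Your use of~\eqref{srinflog} via Lemma~\ref{xKRx*R}$.iii)$ and of~\eqref{intphi} to control $\int\phi_M$ matches the paper exactly.

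There is, however, a step you have silently skipped that matters for case~$ii)$: the lemma asserts a bound on $\int_0^{\tau_M(x,u)}\sup_{s\le t}|\phi_R(x,s)-x_R^\star|\,dt$, whereas your estimates (like the paper's) really control $\int_0^{\tau_M(x,u)}\sup_{s\le t}|y_R(s)|\,dt$ with $y_R(s)=\phi_R(x,s)-x_R$. The paper records the conversion explicitly at the end of case~$i)$ (``We conclude using that $\phi_R(x,s)-x_R^\star = y_R(s)+x_R-x_R^\star$''), which adds $|x_R-x_R^\star|\,\tau_M(x,u)\le (2/r_\star)\,|x_R-x_R^\star|\log(1/x_M)\le (2/r_\star)\eta$ on $D(\eta)$. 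In case~$i)$ this extra $O(\eta)$ is harmlessly absorbed into $C(u+\eta)$. In case~$ii)$ it is \emph{not} absorbed into $C(u+\eta^2)$: take $x_M\to 0$, $u$ fixed small, and $|x_R-x_R^\star|=\eta/\log(1/x_M)$; then $x\in B_u(x^\star)\cap D(\eta)$ for $x_M$ small enough and, since $\tau_M(x,u)\gtrsim \log(1/x_M)$, the conversion term is genuinely of order~$\eta$, not $\eta^2$. So the bound one actually obtains from this method for case~$ii)$ is $C(u+\eta)$. This looks like an imprecision inherited from the lemma's own statement — the only place the lemma is used (Lemma~\ref{controltps}, with $u=L\eta$) only needs the $C\eta$ estimate, so nothing downstream breaks — but a careful write-up should include the conversion and note that it degrades the $\eta^2$ to $\eta$.
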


\begin{proof}
Here again,  $x\in B_u(x^\star)$ and we will choose $u$ small enough to have the following estimates. 

 Firstly, let us prove the result under the assumption $\partial_R F_R(x^{\star})<0$. Using ~\eqref{step11} and noting
 $$z(t)=\underset{s\leq t}{\sup}~|y_R(s)|$$
 we have for all $t\leq \tau_M(x,u),$
 \begin{align*}
z(t) 
\leq& 
C\left( \vert x_{R}-x_R^{\star}\vert+\phi_M(x,t) + z(t)^2\right),
\end{align*}
since the function $t\mapsto \phi_M(x,t)$ is non-decreasing on $[0,\tau_M(x,u)]$ for $u$ small enough. As a consequence, 
\begin{align*}
\int_0^{\tau_M(x,u)} z(s) \,ds 
\leq&
C\bigg( \tau_M(x,u)|x_R - x^{\star}_R|+
\int_0^{\tau_M(x,u)}\phi_M(x,s)ds+z(\tau_M(x,u)) \int_0^{\tau_M(x,u)} z(s)ds\bigg).
\end{align*}
We use  Lemma~\ref{xKRx*R} to estimate  $z(\tau_M(x,u))$. Moreover
 \eqref{intphi} gives a bound for $\int_0^{\tau_M(x,u)}\phi_M(x,s)ds$. Adding that  $\tau_M(x,u)\leq 2\log(1/x_M)/r_\star$ (see Lemma \ref{xKRx*R} iii)), we obtain
 for any $x\in D(\eta)$,
$$\int_0^{{\tau_M(x,u)}}  z(s)\, ds\leq C \, \left( \eta \, + \,  u\,  + u\,\int_0^{\tau_M(x,u)}  z(s)\, ds\right).$$
Then for $u \in (0,1/(2C)]$, for $x\in D(\eta)$, 
$$\int_0^{{\tau_M(x,u)}}  z(s)  ds \leq 2C (\eta +u).$$
We conclude using that $\ \phi_R(x,s)-x_R^{\star}=y_R(s)+x_{R}-x_R^{\star}\ $.

\smallskip
 We can proceed similarly  under the assumption $\partial_R F_R(x^{\star})=0,$ using now~\eqref{derivy} 
 and $$\int_0^T\int_0^t z(s)^2dsdt
\leq \int_0^T z(t)\int_0^t z(s)dsdt
\leq \left(\int_0^T z(t)dt\right)^2.$$
The proof is completed.
\end{proof}

{The next lemma allows to lower-bound the growth rate of the mutant population on the time-windows we are interested in. It is used to quantify the hitting times~$\tau_M(x,v)$.}

\begin{lemme}\label{lemmeetav}
{The value of~$v_\star$ is positive and finite.} For all $v\in (0,v_\star)$, there exist ~$\eta\in (0,  r_\star)$ and $r>0$ such that, for all $x\in B_{r}(x^\star)\cap (\r_+^*)^2$ and  $t\leq \tau_M(x,v), \, F_M(\phi(x,t))\geq \eta.$
\end{lemme}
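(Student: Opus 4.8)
The plan is to unpack the definitions of $v_\star$, $v_\eta$ and $\mathcal{C}(x,\eta)$ and turn the statement into a continuity/compactness argument about the flow near $x^\star$. First I would establish that $v_\star > 0$. Since $F_M(x^\star) = r_\star > 0$ and $F_M$ is continuous (Assumption~\ref{assumpt}.$(R)$), there is a ball $B_{r_0}(x^\star)$ on which $F_M \geq r_\star/2$; hence for any $\eta \leq r_\star/2$ and any $x$ close to $x^\star$, the flow must leave $B_{r_0}(x^\star)$ before $\overline{\tau}_M(x,\eta)$ can occur, so $\phi_M$ grows at least like $x_M e^{r_\star t/2}$ until it reaches a value bounded below by a positive constant depending only on $r_0$. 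This forces $\mathcal{C}(x,\eta)$ to be bounded below by that positive constant uniformly for $x$ near $x^\star$, giving $v_\eta \geq c > 0$ and thus $v_\star \geq c > 0$. For finiteness, I would use Assumption~\ref{hypdom}: the flow started in $\mathcal{D}$ stays in a fixed bounded set, so $\phi_M$ is uniformly bounded along trajectories issued near $x^\star$, and therefore $\mathcal{C}(x,\eta) \leq M$ for some finite $M$, whence $v_\eta \leq M$ and $v_\star \leq M < \infty$.

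Next I would prove the main claim. Fix $v \in (0, v_\star)$. By definition $v_\star = \lim_{\eta \downarrow 0} v_\eta$ with $v_\eta$ increasing as $\eta \downarrow 0$, so there is some $\eta_1 > 0$ with $v < v_{\eta_1}$; fix this $\eta_1$ (and WLOG $\eta_1 < r_\star$). By definition of $v_{\eta_1}$ as $\lim_{r \downarrow 0} \inf_{x \in B_r(x^\star)\cap(\R_+^*)^2} \mathcal{C}(x,\eta_1)$, there exists $r > 0$ such that for all $x \in B_r(x^\star) \cap (\R_+^*)^2$ one has $\mathcal{C}(x,\eta_1) > v$. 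Now I claim that for such $x$, the hitting time $\tau_M(x,v)$ occurs strictly before $\overline{\tau}_M(x,\eta_1)$. Indeed, on $[0,\overline{\tau}_M(x,\eta_1)]$ we have $F_M(\phi(x,t)) > \eta_1 > 0$, so $\phi_M(x,\cdot)$ is strictly increasing there, and its supremum on that interval is exactly $\mathcal{C}(x,\eta_1) > v$; since $\phi_M(x,0) = x_M < v$ (taking $r$ small enough that $x_M < v$ on $B_r(x^\star)$, which is automatic as $x_M \to 0$) and $\phi_M$ is continuous, by the intermediate value theorem $\phi_M(x,t) = v$ for some $t < \overline{\tau}_M(x,\eta_1)$, i.e. $\tau_M(x,v) < \overline{\tau}_M(x,\eta_1)$.

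Finally, combining the two observations: for $t \leq \tau_M(x,v) < \overline{\tau}_M(x,\eta_1)$, the definition of $\overline{\tau}_M(x,\eta_1)$ as the first time $F_M(\phi(x,t)) \leq \eta_1$ gives $F_M(\phi(x,t)) > \eta_1$, so the conclusion holds with $\eta := \eta_1$ and this $r$. I expect the only subtle point to be the bookkeeping with the two limits defining $v_\star$ — making sure the monotonicity of $v_\eta$ in $\eta$ is used correctly so that strictness $v < v_\star$ yields a genuinely positive $\eta_1$ with $v < v_{\eta_1}$, and then that the $\liminf$-type definition of $v_{\eta_1}$ really produces a radius $r$ on which $\mathcal{C}(\cdot,\eta_1)$ stays above $v$ rather than merely having infimum equal to $v_{\eta_1}$. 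Once the quantifiers are lined up, the argument is just continuity of $\phi_M$ and the intermediate value theorem, with no real estimates needed; Lemma~\ref{xKRx*R} is not required here, though it is morally the companion statement controlling the resident coordinate.
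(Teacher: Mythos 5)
Your proof of the main implication (existence of $\eta$ and $r$ given $v<v_\star$) is correct and follows essentially the same route as the paper: unwinding the definitions of $v_\eta$ and $\mathcal{C}(x,\eta)$, using monotonicity in $\eta$ to pick $\eta_1$ with $v<v_{\eta_1}$, using the limiting infimum defining $v_{\eta_1}$ to produce a radius $r$, and then invoking continuity and the intermediate value theorem to get $\tau_M(x,v)<\overline{\tau}_M(x,\eta_1)$. The finiteness of $v_\star$ via Assumption~\ref{hypdom} is also the same as the paper.

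However, your argument for the positivity of $v_\star$ has a genuine gap, and it is precisely the step you declare unnecessary. You observe that $F_M\geq r_\star/2$ on a ball $B_{r_0}(x^\star)$, correctly conclude that $\overline{\tau}_M(x,\eta)$ cannot occur inside $B_{r_0}(x^\star)$ for $\eta<r_\star/2$, and then assert that ``$\phi_M$ grows at least like $x_M e^{r_\star t/2}$ until it reaches a value bounded below by a positive constant depending only on $r_0$.'' This silently assumes that the flow exits $B_{r_0}(x^\star)$ through a point where $\phi_M$ is comparable to $r_0$. But $B_{r_0}(x^\star)$ is a two-dimensional ball centered on the invariant axis $\{x_M=0\}$, and a priori the trajectory could leave it in the $R$-direction, i.e.\ at a point $(x^\star_R\pm r_0,\phi_M)$ with $\phi_M$ still of order $x_M$, hence arbitrarily small. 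In that case $\mathcal{C}(x,\eta)$ would not be bounded below uniformly in $x\in B_r(x^\star)\cap(\R_+^*)^2$, and the infimum defining $v_\eta$ could vanish. Ruling this out is exactly the content of Lemma~\ref{xKRx*R}, which bounds $\sup_{t\leq\tau_M(x,u)}|\phi_R(x,t)-x^\star_R|$ by $Cu$ and therefore confines the trajectory to a thin box $[x^\star_R-Cu,x^\star_R+Cu]\times[0,u]\subset B_{r_0}(x^\star)$ until $\phi_M$ reaches the level $u$; this is how the paper obtains $v_\star\geq u_1>0$. Your closing remark that ``Lemma~\ref{xKRx*R} is not required here'' is therefore incorrect: it is required, and without it (or an equivalent control on the resident coordinate) the positivity argument does not close. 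Note also that in the partially hyperbolic case $\partial_R F_R(x^\star)=0$ this control is more delicate and only holds for initial data in $D(\eta_0)$, so even the paper's brief ``can be treated similarly'' hides a real issue that your sketch would not detect.
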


\begin{proof}
Recall that $r_{\star}=F_M(x^\star)$. We consider the case $\partial_R F_R(x^{\star})<0$ and the other case can be treated similarly. Thanks to Lemma~\ref{xKRx*R}, 
we can choose   $u_0>0$ such that, for all $u\leq u_0,$ and $x\in B_u(x^\star),$
\begin{align*}
\left|\phi_R(x,\tau_M(x,u)) - x^\star_R\right|\leq \underset{t\leq \tau_M(x,u)}{\sup}\left|\phi_R(x,t) - x^\star_R\right|\leq Cu.
\end{align*}
Then
$$\{\phi(x,t) : t\leq \tau_M(x,u)\} \subset ([x^\star_R-Cu,x^\star_R + Cu]\times[0,u])\cap (\r_+^*)^2.$$
Since $F_M(x^{\star})>0$, we can  choose $u_1\in (0,u_0]$  such that
\begin{equation}\label{FRlb}
\eta :=\inf \, \{F_M(\phi(x,t)) : t\leq \tau_M(x,u_1),x\in B_{u_1}(x^\star)\}>0.
\end{equation}

This ensures that $v_{\star}\geq u_1>0$.

Besides, starting close to the neighborhood of $x^{\star}$, the flow is bounded by  Assumption \ref{hypdom}. This ensures that $v_{\star}$  is finite.\\

Note that~\eqref{FRlb} only proves the second statement of the lemma for arbitrary small values of~$v<v_\star$. For the general case, consider some~$v<v_\star$. Then there exists  $\eta\in (0, r_\star)$ such that $v< v_{\eta}.$
    Hence there exists some $r>0$ such that
    $$v<\inf_{ x\in B_{r}(x^\star)\cap (\R_+^*)^2} \mathcal C(x,\eta).$$
Let $x$ belong to $B_{r}(x^\star)\cap (\R_+^*)^2.$ Then
   $$v< \mathcal C(x,\eta)=\sup\{ \phi_M(x,t)\,  : \, t\leq \overline{\tau}_M(x,\eta)\}$$ and we get $F_M(\phi(x,t))\geq \eta$ for $t\leq \tau_M(x,v)$ by continuity   of $t\rightarrow F_M(\phi(x,t))$.
\end{proof}
\medskip
We can now prove Proposition \ref{tKv}.
\begin{proof}[Proof of Proposition \ref{tKv}]
Let us consider the case~$\partial_R F_R(x^\star_R)<0$. We start by proving the result for small values of~$v<v_\star$.

By respectively Lemmas~\ref{xKRx*R} and~\ref{lemmeetav}, there exist some $0<u_1<v_\star$ and $\eta>0,$ such that, for all $u\leq u_1$ and $x\in B_{u}(x^\star),$
\begin{align}
\label{noel}
\left|\phi_R(x,\tau_M(x,u)) - x^\star_R\right|\leq \underset{t\leq \tau_M(x,u)}{\sup}\left|\phi_R(x,t) - x^\star_R\right|\leq Cu,
\end{align}
and for all $t\leq \tau_M(x,u),$
$$F_M(\phi(x,t))\geq\eta.$$

Furthermore
$$\tau_M(x,\phi_M(x,t))=t$$
and  
$$\partial_t \phi_M(x,t) = \phi_M(x,t) F_M( \phi_R(x, \tau_M(x,\phi_M(x,t))),\phi_M(x,t)).$$
By  separation of variables  (put $u=\phi_M(x,t)$) and integration on the time interval $[0,\tau_M(x,v)]$, we get
\begin{align*}
\tau_M(x,v) 
&= \int_{x_M}^{v} \frac{1}{u F_M( \phi_R(x, \tau_M(x,u)), u)}du 
= \frac{1}{r_{\star}}\log(v/x_M) + \mathcal R_v(x),
\end{align*}
for any $v\in (0, u_1]$ and
$x\in B_{u_1}(x^\star)$ (with $x_M\leq v$) and $t\leq \tau_M(x,v)$,
where 
\begin{align*}
\mathcal R_v(x)=\int_{x_M}^v\frac1u \left(\frac{1}{F_M(\phi_R(x, \tau_M(x,u)), u))}-\frac{1}{r_{\star}}\rr)du.
\end{align*}
To get the expected estimate, we need to evaluate 
the term inside the integral for  $u$ close to $0$.
Using that $F_M$ is locally Lipschitz and $r_{\star}=F_M(x^\star)$ and  
\eqref{noel},
and we obtain that for any $u\in (0, u_1]$ and
$x\in B_{u_1}(x^\star)$ (with $x_M\leq u$),
$$\frac1u \left(\frac{1}{F_M(\phi_R(x, \tau_M(x,u)), u))}-\frac{1}{r_{\star}}\rr)\leq C.$$
The integral of the right hand side is convergent at~$0$ and we get
$$\lim_{v\rightarrow 0} \, \sup_{x\in B_{u_1}(x^\star) : x_M\leq v} 
\, \mathcal R_v(x)
=0.$$
 Besides, for any $u_2\in (0,u_1]$,
$(x,u) \rightarrow F_M(\phi_R(x, \tau_M(x,u)), u)$ is uniformly continuous on 
$\{(x,u) :  x\in B_{u_1}(x^\star), x_M\leq u, u\in [u_2,u_1]\}$. This can be shown 
by a linearization argument at the equilibrium $x^{\star}$ {using Hartman-Grobman's Theorem (see e.g. Section~3.2 of \cite[p. 24]{hofbauer} and particularly Theorem~3.2.1)}.
Combining both facts and splitting the interval  $[x_m,v]=[x_m,u_2]\cup [u_2,v]$ yield 
$$\sup\left\{\left\vert \mathcal R_v(x)-\mathcal R_v(x')\right\vert
:  x,x' \in B_r(x^{\star})\cap (\R_+^*)^2, v \in [\max(x_M,x_M'), u_1]\right\} \underset{r\rightarrow 0}{\longrightarrow} 0.$$
The uniform Cauchy criterion ensures then that  $\mathcal R_v(x)$ converges, uniformly for $v\in [v_0,v_1] $,  to a real  number  $\tau(v)$ as $x$ goes to $x^{\star}$. Therefore,
\begin{equation}
\label{def:tau}\tau(v)= \lim_{x\to x^\star} \int_{x_M}^v\frac1u \left(\frac{1}{F_M(\phi_R(x, \tau_M(x,u)), u))}-\frac{1}{r_{\star}}\rr)du\end{equation}
and   the previous estimates also guarantee that $\tau$ is continuous on $(0,u_1]$ and that $\tau(v)$ goes to
$0$ as $v$ goes to $0$.
This ends the proof for  $v \in (0,u_1]$. \\

To deal with~$v\in [u_1, v_{\star})$, we split the time to reach~$v$ using the time to reach first~$u_1$:
$$\tau_M(x,v)=\tau_M(x,u_1)+\tau_M(x,u_1,v),$$
where $\tau_M(x,u_1,v)$ yields the time to go from $u_1$ to $v$:
\begin{equation}
    \label{splittimes}
\tau_M(x,u_1,v)=\tau_M((\phi_R(x,\tau_M(x,u_1)),u_1),v)=\int_{u_1}^v\frac1u \frac{1}{F_M(\phi_R(x, \tau_M(x,u)), u))}\, du.
\end{equation}
It remains to check that this  term converges as 
$x$ goes to $x^*$  (uniformly for
$u\in [u_1,u_3]$ where $u_3<v_{\star})$.  This can be achieved as above using uniform continuity of
$(x,u) \in \{(x,u) :  x\in B_{u_1}(x^\star), u\in [u_1,u_3]\} \rightarrow F_M(\phi_R(x, \tau_M(x,u)), u)$. 
It ensures that the limit is continuous and increasing with respect to $v$. {The conclusion follows from the fact that $\log(v/x_M) = \log(v/u_1) + \log(u_1/x_M)$.}\\
The case $\partial_R F_R(x^{\star})=0$ is treated in similar way.  The linearization argument for proving uniform continuity is more subtle in that partially hyperbolic case and we can use \cite{Takens}.
  \end{proof}

\subsection{Quantitative approximation of the stochastic process}

\label{qttdyn}

The main result  can now be stated. We consider a large number of mutants but authorize this number to be small compared to initial resident population size. This makes the limiting process to be null on finite time intervals and we look at large time intervals $\tau_M(x_K,v)$ and renormalized values to describe  how the mutant density escapes from the absorbing and unstable boundary. Thus the initial density of mutants is small, and the initial density of residents is close to the equilibrium.
  \begin{theorem}\label{maindynamic}
Under Assumptions~\ref{assumpt} and \ref{hypdom},  for all  $x_K=(x_R^K,x_M^K)\in { (\n^*/K)^2}$,
satisfying
$$1/K\ll x^K_M \ll 1 \quad  \textrm{ and  } \quad |x^K_R - x^{\star}_R| \log\left(1/x^K_M\rr)\ll 1,$$
 there exists $\eta_0>0$ such that, for any $v\in (0,v_\star),\eta\in(0,\eta_0)$, there exists some $C>0$ satisfying for all $K\in\mathbb{N}^*$,
$$\pro{\underset{t \leq \tau_M(x_K,v)}{\sup}\bigg\vert \frac{X^K_M(x_K,t)}{ \phi_M(x_K,t)}-1\bigg\vert>\eta}\leq \frac{C}{\sqrt{K x^K_M}}.$$
\end{theorem}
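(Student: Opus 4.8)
The plan is to prove the estimate first for small target levels, say for a fixed $u_1\in(0,v_\star)$ and all $v\le u_1$, and then to reach an arbitrary $v\in(0,v_\star)$ by concatenation. For the second step one splits $[0,\tau_M(x_K,v)]=[0,\tau_M(x_K,u_1)]\cup[\tau_M(x_K,u_1),\tau_M(x_K,v)]$: on the second piece the mutant density ranges over the compact interval $[u_1,v]$, the length $\tau_M(x_K,v)-\tau_M(x_K,u_1)\le \tfrac{1}{r_\star}\log(v/u_1)+C$ is bounded uniformly in $K$, and by the strong Markov property the process is restarted at time $\tau_M(x_K,u_1)$ from a state which, by the small-level estimate applied with $v=u_1$, is within $C/\sqrt{Kx_M^K}$ of $\phi(x_K,\tau_M(x_K,u_1))$ with probability $1-C/\sqrt{Kx_M^K}$; a classical fluid-limit bound on a bounded time interval with order-one initial data (as in \cite{ethierkurtz}, or a plain Gr\"onwall argument) together with the continuous dependence of $\phi$ on its initial datum then yields a relative error $\le C_v/\sqrt{K}\le C_v/\sqrt{Kx_M^K}$ on that piece. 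From now on I assume $v\le u_1$, chosen small enough (and $K$ large enough that $x_K\in B_{u_1}(x^\star)$) so that Lemmas~\ref{xKRx*R} and \ref{lemmeetav} apply along the whole flow $\phi(x_K,\cdot)$ up to time $\tau_M(x_K,v)$.

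The central quantities are the relative error $\rho(t):=X^K_M(x_K,t)/\phi_M(x_K,t)-1$ and the resident gap $D_R(t):=X^K_R(x_K,t)-\phi_R(x_K,t)$. From \eqref{def:stochflow} one has the semimartingale decomposition $X^K_\bullet(x_K,t)=x_\bullet^K+\int_0^t X^K_\bullet(x_K,s)F_\bullet(X^K(x_K,s))\,ds+\mathcal M^K_\bullet(t)$ with $\langle\mathcal M^K_\bullet\rangle(t)=\tfrac{1}{K}\int_0^t X^K_\bullet(x_K,s)(b_\bullet+d_\bullet)(X^K(x_K,s))\,ds$. Since $\phi_M(x_K,\cdot)$ is $C^1$, positive and of finite variation, the product rule applied to $X^K_M(x_K,\cdot)\,\phi_M(x_K,\cdot)^{-1}$ gives
$$\rho(t)=\int_0^t\big(1+\rho(s-)\big)\big(F_M(X^K(x_K,s-))-F_M(\phi(x_K,s))\big)\,ds+\mathcal N^K(t),$$
where $\mathcal N^K(t):=\int_0^t\phi_M(x_K,s)^{-1}\,d\mathcal M^K_M(s)$ is a martingale with $d\langle\mathcal N^K\rangle(s)=\big(1+\rho(s)\big)(b_M+d_M)(X^K(x_K,s))\,K^{-1}\phi_M(x_K,s)^{-1}\,ds$. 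By Lemma~\ref{lemmeetav} there is $\eta_\star\in(0,r_\star)$ with $F_M(\phi(x_K,s))\ge\eta_\star$ for $s\le\tau_M(x_K,v)$; hence $\phi_M(x_K,s)\ge x_M^K e^{\eta_\star s}$, so $\int_0^{\tau_M(x_K,v)}\phi_M(x_K,s)^{-1}\,ds\le(\eta_\star x_M^K)^{-1}$ and $\int_0^{\tau_M(x_K,v)}\phi_M(x_K,s)\,ds\le v/\eta_\star$ (as in the computation leading to \eqref{intphi}). The factor $(Kx_M^K)^{-1}$ in the conclusion is precisely this bound on $\langle\mathcal N^K\rangle(\tau_M(x_K,v))$.

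Next I would introduce the stopping time $\theta^K:=\inf\{t\ge0:|\rho(t)|\ge\eta\ \text{or}\ |X^K_R(x_K,t)-x^\star_R|\ge\delta\}\wedge\tau_M(x_K,v)$, with $\delta$ a small fixed constant (depending only on the model) chosen so that $F_M,F_R,b_\bullet,d_\bullet$ and their first derivatives are Lipschitz on a compact neighbourhood $\mathcal K$ of $x^\star$ containing both $X^K(x_K,\cdot)$ and $\phi(x_K,\cdot)$ on $[0,\theta^K]$; that $\phi(x_K,\cdot)$ stays in $\mathcal K$ is Lemma~\ref{xKRx*R} (which also gives $\tau_M(x_K,v)\le\tfrac{2}{r_\star}\log(1/x_M^K)$, whence $\langle\mathcal M^K_R\rangle(\theta^K)\le C\log(1/x_M^K)/K$). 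On $[0,\theta^K]$ one has $|F_M(X^K(x_K,s))-F_M(\phi(x_K,s))|\le C(|D_R(s)|+\phi_M(x_K,s)|\rho(s)|)$ and, linearising $G_R$ at $\phi_R(x_K,\cdot)$,
$$dD_R(s)=\big(\lambda(s)D_R(s)+\mu(s)\,\phi_M(x_K,s)\,\rho(s)+\mathrm{Rem}(s)\big)ds+d\mathcal M^K_R(s),\qquad |\mathrm{Rem}(s)|\le C\big(D_R(s)^2+\phi_M(x_K,s)^2\rho(s)^2\big),$$
with $\lambda(s)=\partial_RG_R(\phi(x_K,s))$. The sign $\partial_RG_R(x^\star)=x^\star_R\,\partial_RF_R(x^\star)\le0$ is what makes the argument work over the long horizon: if $\partial_RF_R(x^\star)<0$ then $\lambda(s)\le-c<0$, whereas if $\partial_RF_R(x^\star)=0$ then $|\lambda(s)|\le C(|\phi_R(x_K,s)-x^\star_R|+\phi_M(x_K,s))$ and $\int_0^{\theta^K}|\lambda(s)|\,ds$ is bounded by Lemma~\ref{xKRx*R} and the hypothesis $|x_R^K-x^\star_R|\log(1/x_M^K)\ll1$. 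Solving the linear equation for $D_R$ with the integrating factor $\exp\big(\int_u^t\lambda(r)\,dr\big)$ and inserting $D_R$ back into the equation for $\rho$, the key point is that the martingale part $U(t):=\int_0^t\exp\big(\int_u^t\lambda(r)\,dr\big)\,d\mathcal M^K_R(u)$ of $D_R$ must be fed into the $\rho$-equation through a stochastic Fubini, $\int_0^t a(s)(1+\rho(s))U(s)\,ds=\int_0^t\psi(u)\,d\mathcal M^K_R(u)$ with $|\psi|\le C$, so that its contribution to $\rho$ has variance $\le C\langle\mathcal M^K_R\rangle(\theta^K)\le C\log(1/x_M^K)/K$, which, since $y\mapsto\sqrt{y}\log(1/y)$ is bounded on $(0,1)$, is $\le C/(Kx_M^K)$. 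All remaining drift terms in the $\rho$-equation carry the integrable weight $\phi_M(x_K,\cdot)$, of integral $\le v/\eta_\star$, so Gr\"onwall produces only a $K$-independent amplification $C_v$. Combining this with Doob's inequality for $\mathcal N^K$ and with an analogous (and smaller) control of the $\mathrm{Rem}$ terms and of $\sup_{t\le\theta^K}|D_R(t)|$ itself gives $\mathbb E\big[\sup_{t\le\theta^K}|\rho(t)|\big]\le C_v/\sqrt{Kx_M^K}$ and $\mathbb E\big[\sup_{t\le\theta^K}|D_R(t)|\big]\le C_v/\sqrt{Kx_M^K}$.

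Finally the bootstrap closes the loop: on $\{\theta^K<\tau_M(x_K,v)\}$ either $\sup_{t\le\theta^K}|\rho(t)|\ge\eta$ or $\sup_{t\le\theta^K}|D_R(t)|\ge\delta/2$ (using $\sup_{t\le\theta^K}|\phi_R(x_K,t)-x^\star_R|\le Cu_1\le\delta/2$ from Lemma~\ref{xKRx*R}), so Markov's inequality yields $\mathbb P(\theta^K<\tau_M(x_K,v))\le C_{v,\eta}/\sqrt{Kx_M^K}$; and on $\{\theta^K=\tau_M(x_K,v)\}$ the event $\{\sup_{t\le\tau_M(x_K,v)}|\rho(t)|>\eta\}$ is contained in $\{\sup_{t\le\theta^K}|\rho(t)|>\eta\}$, again of probability $\le C_{v,\eta}/\sqrt{Kx_M^K}$. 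I expect the main obstacle to be the control of the resident gap $D_R$ over the time window $[0,\tau_M(x_K,v)]$, whose length is of order $\log(1/x_M^K)$ and may be comparable to $\log K$: propagating a bound sharp enough (order $1/\sqrt{Kx_M^K}$, not $(\log K)\,K^{-1/2}$) is only possible because $\partial_RF_R(x^\star)\le0$, and it forces one to route the resident noise into the mutant equation as a stochastic integral with bounded integrand rather than via a pathwise bound on $\int_0^{\theta^K}|D_R|\,ds$, which would cost a spurious logarithmic factor; the case $\partial_RF_R(x^\star)=0$ is the more delicate one, and it is exactly where the assumption $|x_R^K-x^\star_R|\log(1/x_M^K)\ll1$ is used.
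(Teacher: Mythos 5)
Your overall plan — a bound up to a small target level $u_1$ followed by a finite-time concatenation, a martingale of bracket $\lesssim(Kx_M^K)^{-1}$ for the mutant ratio, a Gr\"onwall with integrable weight $\phi_M$, and a control of the resident gap over the long horizon via $\partial_RF_R(x^\star)\le 0$ — matches the paper's three-step proof (\eqref{maindyn1}, then the Ethier--Kurtz extension, then the concatenation via \eqref{splittimes}). However, the step you single out as essential contains a genuine gap. You propose to write $\int_0^t a(s)(1+\rho(s))U(s)\,ds=\int_0^t\psi(u)\,d\mathcal M^K_R(u)$ by stochastic Fubini and then to bound the variance by $C\langle\mathcal M^K_R\rangle(\theta^K)$. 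But the integrand $\psi(u)=\int_u^t a(s)(1+\rho(s))\Phi(s,u)\,ds$ involves $\rho(s)$ for $s\in[u,t]$ and is therefore not $\mathcal F_u$-adapted: the right-hand side is not an It\^o integral and the It\^o isometry that you invoke does not apply. If instead you pull out the bound $|1+\rho|\le 2$ on $[0,\theta^K]$, you are left with $\int_0^t|a(s)||U(s)|\,ds\le T^K\sup_{s\le T^K}|U(s)|$, which is exactly the ``pathwise'' bound you wanted to avoid — you cannot eliminate the $T^K$ factor by this route.

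More importantly, the motivation for that detour is misplaced: the pathwise bound is sharp enough. The paper's Lemma~\ref{controltps} controls $\int_0^{T^K}\sup_{r\le s}|Y^K_R(r)|\,ds$ via $|Y^K_R(t)|\le C\sup_s|H^K(s)|$ (from Lemma~\ref{lemmeadrien} when $\partial_RF_R(x^\star)<0$, from Gr\"onwall and Lemma~\ref{xKRx*Rdeux} when it vanishes), then bounds the martingale contribution by $CK^{-1/2}\,\tau_M(x_K,L\eta)^{1/2}\le CK^{-1/2}\bigl(\log(1/x^K_M)\bigr)^{1/2}$ and picks up one more factor $\tau_M(x_K,L\eta)\lesssim\log(1/x^K_M)$ from the outer time integral, yielding \eqref{controlintegraly} with exponent $3/2$ in the logarithm. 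The factor $\bigl(\log(1/x^K_M)\bigr)^{3/2}$ is then absorbed by the elementary inequality $\bigl(\log(1/x)\bigr)^3\le 1/x$, giving the required $K^{-1/2}(x_M^K)^{-1/2}$. So the claim that a pathwise bound ``would cost a spurious logarithmic factor'' and that you must route the resident noise through a stochastic integral with bounded integrand is incorrect; the spurious logarithm appears and is harmless. (Your incidental remark that ``$y\mapsto\sqrt y\log(1/y)$ is bounded on $(0,1)$'' is also false; the relevant fact is simply $\log(1/y)\le 1/y$, and the paper uses $\bigl(\log(1/y)\bigr)^3\le 1/y$.) Once you replace the Fubini step with the paper's pathwise control and the log-absorption, your argument reduces to the paper's.
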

Observe that our initial conditions are satisfied
as soon as the initial number of mutants $N^K_M(0)$
goes to infinity but remains negligible compared to $K$ and the initial density of mutants $X^K_R(0)$ is close
to the equilibrium value up to a term of order of magnitude $1/\log K$.
 Our result stops at hitting times $\tau_M(x_K,v)$  but  could be easily extended on finite time intervals and with initial conditions of order one, following classical results of \cite{K1,K2}. \\

In all this subsection, the hypotheses of Theorem~\ref{maindynamic} are in force, and we denote, for $\bullet\in\{R,M\},$
$$X^K_\bullet(t) := X^K_\bullet(x_K,t)\,,\,
x^K_\bullet(t) := \phi_\bullet(x_K,t)$$
and
$$
Y^K_\bullet(t) := X^K_\bullet(t) - x^K_\bullet(t).$$

In addition, we introduce the following stopping times: for $\bullet\in\{R,M\},$
\begin{align*}
\theta^K_\bullet :=& \inf\left\{t>0~:~\frac{X^K_\bullet(t)}{x^K_\bullet(t)}\geq 2\rr\}\,,\,\theta^K := \theta^K_R\wedge\theta^K_M,\\
\sigma^K_\eta :=& \inf\left\{t>0~:~\int_0^t \underset{r\leq s}{\sup}~|Y^K_R(r)|ds\geq \eta\right\}\textrm{ for  }\eta>0,\\
\tilde\sigma^K_\eta :=& \inf\left\{t>0~:~\underset{s\leq t}{\sup}~|Y^K_R(s)|\geq \eta\right\}\textrm{ for }\eta>0.
\end{align*}

In order to prove Theorem~\ref{maindynamic}, we need the following result providing a control in probability when $K$ goes to infinity of the stopping times~$\theta^K_R$,~$\sigma^K_\eta$ and~$\tilde\sigma^K_\eta.$ {This lemma guarantees the same property as Lemma~\ref{controlRbranch} for the previous section: with probability going to one as $K$ goes to infinity, $\tau_M(x_K,L\eta)\wedge\theta^K_M$ is non-greater than the stopping times defined above. In other words, the stopping condition is reached by the mutant population and not by the resident one.}

\begin{lemme}\label{controltps}
There exist $L,\eta_0>0$ such that for all~$0<\eta\leq\eta_0$,
\begin{align}
    \pro{\sigma^K_\eta \leq \tau_M(x_K,L\eta)\wedge\theta^K}\leq& \frac{C}{\eta}K^{-1/2}\left(\log\left(1/x^K_M\right)\right)^{{3/2}},\label{controlintegraly}\\
        \pro{\tilde\sigma^K_{\eta} \leq \tau_M(x_K,L\eta)\wedge\theta^K\wedge\sigma^K_\eta}\leq& \frac{C}{\eta} K^{-1/2} \left(\log\left(1/x^K_M\right)\right)^{1/2},\label{controlsupy}\\
    \pro{\theta^K_R\leq \tau_M(x_K,L\eta)\wedge\theta^K_M}\leq& \frac{C}{\eta} K^{-1/2}\left(\log\left(1/x^K_M\right)\right)^{{3/2}}.\label{thetaKR}
\end{align}
\end{lemme}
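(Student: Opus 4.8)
The plan is to analyze the resident gap $Y^K_R$ using the same decomposition as in the branching regime (equation \eqref{YKRadrien} and Lemma~\ref{lemmeadrien}), but now on the \emph{long} time window of order $\log(1/x^K_M)$, which by Lemma~\ref{xKRx*R}~iii) dominates $\tau_M(x_K,L\eta)$. The starting point is that on the event $\{t\leq\tau_M(x_K,L\eta)\wedge\theta^K\wedge\tilde\sigma^K_\eta\}$ the processes $X^K_R$ and $X^K_M$ are bounded (by $x^\star_R+\eta$, up to constants, and by $2L\eta$ respectively, using $\theta^K_M$ and $\tilde\sigma^K_\eta$), so the rates $b_\bullet,d_\bullet$ are bounded and Lipschitz on the relevant region. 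Writing $X^K_R(t)-x^K_R(t)=Y^K_R(t)$, one gets a semimartingale decomposition $Y^K_R(t)=M^K(t)+\int_0^t(\text{drift})ds$, where the drift is controlled by $\sup_{s\le t}|Y^K_R(s)|$ (with the appropriate sign when $\partial_RF_R(x^\star)<0$) plus a contribution of size $x^K_M(t)+\ldots$ coming from the mutant density, and $M^K$ is a martingale with $\esp{\langle M^K\rangle_t}\le C t/K\le C\log(1/x^K_M)/K$.

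First I would prove \eqref{controlsupy}: on $\{t\le\tilde\sigma^K_\eta\wedge\sigma^K_\eta\wedge\tau_M(x_K,L\eta)\wedge\theta^K\}$, Gronwall-type arguments (using $\int_0^t\sup_{r\le s}|Y^K_R(r)|ds\le\eta$ from the definition of $\sigma^K_\eta$, and $\int_0^{\tau_M(x_K,L\eta)}x^K_M(s)ds\le C\eta$ via \eqref{intphi}) give $\sup_{t\le\cdot}|Y^K_R(t)|\le C(|x^K_R-x^\star_R|+\eta+\sup|M^K(t)|)$. Then Doob/BDG plus Markov give $\pro{\sup|M^K(t)|\ge\eta/(2C)}\le C\eta^{-1}(\log(1/x^K_M)/K)^{1/2}$, which is \eqref{controlsupy}. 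Next, \eqref{controlintegraly} follows by integrating the same bound in time: $\int_0^{\tau_M(x_K,L\eta)}\sup_{s\le t}|Y^K_R(s)|dt\le\tau_M(x_K,L\eta)\cdot C(\ldots)$, and since $\tau_M(x_K,L\eta)\le(2/r_\star)\log(1/x^K_M)$, the extra factor $\log(1/x^K_M)$ explains the power $3/2$; one must handle the martingale term via $\int_0^T\sup_{s\le t}|M^K(s)|dt\le T\sup_{t\le T}|M^K(t)|$ and the same BDG estimate. Finally \eqref{thetaKR}: on $\{\theta^K_R\le\tau_M(x_K,L\eta)\wedge\theta^K_M\}$ one has $X^K_R(\theta^K_R)=2x^K_R(\theta^K_R)$, so $|Y^K_R(\theta^K_R)|=x^K_R(\theta^K_R)\ge x^\star_R-C(L\eta)\ge$ a positive constant, hence this event is contained (for $\eta$ small) in $\{\tilde\sigma^K_{c}\le\tau_M(x_K,L\eta)\wedge\theta^K\wedge\sigma^K_\eta\}$ up to also controlling $\sigma^K_\eta$ and $\tilde\sigma^K_\eta$ themselves; combining with \eqref{controlintegraly}–\eqref{controlsupy} gives the bound (the $\log^{3/2}$ being inherited from \eqref{controlintegraly}).

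The main obstacle is the bootstrap/stopping-time juggling: the bounds \eqref{controlintegraly}–\eqref{controlsupy} are stated \emph{before} reaching $\sigma^K_\eta$ or $\tilde\sigma^K_\eta$, so one cannot a priori assume $|Y^K_R|$ is small when estimating the drift. The resolution is the standard device of working up to the minimum of all these stopping times, deriving a closed inequality for $\sup_{s\le t\wedge(\cdots)}|Y^K_R(s)|$ that is \emph{self-improving} (the quadratic/Lipschitz error terms are absorbed because they carry a small factor $\eta$ or $|x^K_R-x^\star_R|\log(1/x^K_M)\ll1$), and then observing that the only way the stopped bound can fail to give $\sup|Y^K_R|<\eta$ outright is if the martingale fluctuation $\sup|M^K|$ is itself of order $\eta$ — an event of the claimed small probability. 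The partially hyperbolic case $\partial_RF_R(x^\star)=0$ is handled exactly as in the proof of Lemma~\ref{controlRbranch}~ii), replacing the contractive Gronwall estimate by the direct integration bound $|Y^K_R(t)|\le C[|x^K_R-x^\star_R|^2\log(1/x^K_M)+\phi_M(x_K,t)+\sup|M^K|]$, which still closes because of the assumption $|x^K_R-x^\star_R|\log(1/x^K_M)\ll1$.
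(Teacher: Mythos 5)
Your overall strategy — the same semimartingale decomposition of $Y^K_R$ via the linearization at $x^\star$, Lemma~\ref{lemmeadrien} (or Gronwall in the degenerate case), and BDG/Markov estimates for the martingale — matches the paper. Your treatment of \eqref{controlsupy} and of the reduction of \eqref{thetaKR} to the first two bounds is correct and essentially identical to the paper's. However, there is a genuine gap in your argument for \eqref{controlintegraly}.

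You propose to obtain \eqref{controlintegraly} by ``integrating the same bound in time,'' namely $\int_0^{\tau_M(x_K,L\eta)}\sup_{s\le t}|Y^K_R(s)|\,dt\le \tau_M(x_K,L\eta)\cdot C\big(|x^K_R-x^\star_R|+\eta+\sup_t|M^K(t)|\big)$. But the middle term on the right then becomes $C\eta\,\tau_M(x_K,L\eta)\sim C\eta\log(1/x^K_M)$, which diverges as $K\to\infty$, so the deterministic contribution already exceeds the threshold $\eta$ and the resulting probability estimate is vacuous. In other words, you cannot afford to pay a factor $\tau_M$ on the drift/remainder terms — only on the martingale term. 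The reason the estimate closes in the paper is that the iterated time integrals of the drift contributions do \emph{not} pick up a factor $\tau_M$: because $x^K_M$ grows exponentially, $\int_0^t x^K_M\,ds\le C x^K_M(t)$ (cf.~\eqref{xkm2}), so $\int_0^{S^K}\int_0^s x^K_M(r)\,dr\,ds\le C\int_0^{S^K}x^K_M(s)\,ds\le CL\eta$ (cf.~\eqref{intykm}), with \emph{no} $\log$; similarly the quadratic $R^K$ terms are bounded by $C\eta^2$ using the definitions of $\sigma^K_\eta$ and $\theta^K$. Only the martingale satisfies $\int_0^{S^K}\sup_{r\le s}|M^K(r)|\,ds\le \tau_M\cdot\sup|M^K|$, which is what produces the $(\log(1/x^K_M))^{3/2}$. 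To repair your sketch, you must keep the integral $\int_0^{S^K}\sup_{r\le s}|Y^K_R(r)|\,ds$ intact and estimate each of the three pieces coming from \eqref{controlHK} separately, exploiting the exponential growth of $x^K_M$ rather than the crude $\sup$-bound multiplied by the time horizon. Once \eqref{controlintegraly} is established in this way, your order of presentation (proving \eqref{controlsupy} first or second) is immaterial, and the rest of your plan — including the bootstrap/stopping-time reasoning, the partially hyperbolic case via Lemma~\ref{xKRx*Rdeux}, and the reduction of \eqref{thetaKR} — goes through as you describe.
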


\begin{proof}
To begin with, let us prove~\eqref{controlintegraly}. For the sake of notation, let us denote
$$S^K := \tau_M(x_K,L\eta)\wedge\theta^K\wedge\sigma^K_\eta.$$
By definition of~$\sigma^K_\eta,$
$$\left\{\sigma^K_\eta \leq \tau_M(x_K,L\eta)\wedge\theta^K\right\}= \left\{\int_0^{S^K} \underset{r\leq s}{\sup}\left|Y^K_R(r) \rr|ds \geq\eta\right\}.$$
To control the probability of this event, 
we use that
\begin{equation}
\label{composte}
Y^K_R(t) = H^K(t) + \int_0^t Y^K_R(s)\partial_R G_R(x^K(s))ds
\end{equation}
 for all $t>0,$ where
\begin{align*}
\displaystyle H^K(t) &:= M^K(t) + R^K(t) + \int_0^t Y^K_M(s)\partial_M G_R(x^K(s))ds,\\
\displaystyle M^K(t) &:= \frac1K \int_{[0,t]\times\r_+} \uno{z\leq K X^K_R(s-)b_R(X^K(s-))}\widetilde\N_R^b(ds,dz) \\
&\qquad \qquad  - \frac1K \int_{[0,t]\times\r_+} \uno{z\leq K X^K_R(s-)d_R(X^K(s-))}\widetilde\N_R^d(ds,dz),\\
\displaystyle R^K(t) &:= \int_0^t \left[G_R(X^K(s)) - G_R(x^K(s)) - Y^K_M(s)\partial_M G_R(x^K(s)) - Y^K_R(s)\partial_R G_R(x^K(s))\rr]dr.
\end{align*}

In a first time, let us treat the case $\partial_R G_R(x^{\star}) = x^{\star}_R \partial_R F_R(x^{\star})<0.$ Thanks to Lemma~\ref{lemmeadrien} we have
$$\underset{s\leq t}{\sup}\left|Y^K_R(s)\rr|\leq C\underset{s\leq t}{\sup}~\left|H^K(s) - H^K(\lfloor s\rfloor)\rr| \leq C~\underset{s\leq t}{\sup} |H^K(t)|.$$

On the other hand, if $\partial_R G_R(x^{\star}) = x^{\star}_R \partial_R F_R(x^{\star})=0,$ 
$$\int_0^{\tau_M(x_K,L\eta)}
|\partial_R G_R(x^K(r))|\leq C
\int_0^{\tau_M(x_K,L\eta)} \left( x^K_M(r) + |x^K_R(r) - x^{\star}_R| \right)dr\leq C L\eta,$$
using respectively Assumption \ref{assumpt} (R) and Lemma~\ref{xKRx*Rdeux}.
Besides, by Gronwall's lemma,
$$|Y^K_R(t)|\leq \sup_{s\leq t} |H^K(s)| \exp\left(
\int_0^t \vert \partial_R G_R(x^K(r))\vert dr \right).$$
Combining these two estimates implies that, for all $t\leq \tau_M(x_K,L\eta)\wedge\theta^K,$
$$|Y^K_R(t)| \leq \sup_{s\leq t} |H^K(s)| e^{C L\eta}\leq C \sup_{s\leq t} |H^K(s)|,$$
for $\eta\leq \eta_0$ fixed to some small enough value.

Gathering the two cases, we have proved that
if $\partial_R F_R(x^{\star})\leq 0$, for $t\leq \tau_M(x_K,L\eta)\wedge\theta^K,$
\begin{align}
\underset{s\leq t}{\sup}\left|Y^K_R(s)\rr| \leq& C_0~\underset{s\leq t}{\sup} |H^K(s)|\nonumber\\
\leq& C_0~\underset{s\leq t}{\sup} |M^K(s)|+C_0~\underset{s\leq t}{\sup} |R^K(s)|+C_0\int_0^t|Y^K_M(s)|\cdot|\partial_M G_R(x^K(s))|ds,\label{controlHK}
\end{align}
for some constant~$C_0>0$ whose value is fixed.

In addition, for $t\leq \tau_M(x_K,L\eta),$
\begin{align}
&\int_0^t x^K_M(s)ds = \int_0^t \frac{1}{F_M(x^K(s))} (x^K_M)'(s)ds \nonumber\\
&\qquad \qquad \leq C \int_0^t (x^K_M)'(s)ds = C(x^K_M(t) - x^K_M(0)) \leq C x^K_M(t).
\label{xkm2}
\end{align}
Using twice this inequality and
$|Y^K_M(r)|=x^K_M(r)\vert X^K_M(r)/x^K_M(r)-1\vert \leq x^K_M(r)$ for $r\leq \theta^K$,
\begin{align}
    \int_0^{S^K}\int_0^s|Y^K_M(r)|\cdot |\partial_M G_R(x^K(r))|drds
   & \leq C\int_0^{S^K}\int_0^s x^K_M(r)drds  
\leq C x^K_M(S^K)    \leq CL\eta.\label{intykm}
\end{align}
By Taylor-Lagrange's inequality and Lemma~\ref{xKRx*R} (which guarantees that the process $x^K$ is bounded up to time~$S^K$), we also have
\begin{align*}
    \int_0^{S^K}\underset{r\leq s}{\sup}~|R^K(r)|ds\leq C\int_0^{S^K} \int_0^s \left(|Y^K_M(r)|^2+ |Y^K_M(r)|\cdot|Y^K_R(r)| +|Y^K_R(r)|^2 \right) drds.
\end{align*}
Since $S^K\leq \tau_M(x_K,L\eta),$ and using the same computation as~\eqref{intykm},
$$\int_0^{S^K}\int_0^s |Y^K_M(r)|^2drds\leq L\eta\int_0^{S^K}\int_0^s |Y^K_M(r)|drds \leq C(L\eta)^2.$$
Besides $|Y^K_M(r)|\cdot|Y^K_R(r)|\leq |Y^K_M(r)|\underset{r'\leq s}{\sup}|Y^K_R(r')| $ for $r\leq s$,
\begin{align*}
\int_0^{S^K}\int_0^s |Y^K_M(r)|\cdot|Y^K_R(r)|drds\leq 
 CL\eta\int_0^{S^K}\underset{r'\leq s}{\sup}|Y^K_R(r')|ds
\leq CL\eta^2.
\end{align*}
Similarly,
$\int_0^{S^K}|Y^K_R(r)|^2drds 
\leq C\eta\int_0^{S^K}\underset{r'\leq s}{\sup}|Y^K_R(r')|ds\leq C\eta^2.$
Combining these estimates yields
 \begin{align}
 \label{encore}
 \int_0^{S^K}\underset{r\leq s}{\sup}~|R^K(r)|ds\leq
  C\eta^2.
\end{align}
Plugging~\eqref{intykm} and~\eqref{encore} in~\eqref{controlHK},
we can choose $L>0$ and $\eta>0$ small enough and get
\begin{align*}
\pro{\int_0^{\tau_M(x_K,L\eta)\wedge \theta^K}|Y^K_R(s)|ds>\eta}\leq& \pro{\int_0^{\tau_M(x_K,L\eta)\wedge \theta^K}|M^K(s)|ds>\frac{\eta}{3C_0}}.
\end{align*}
Using again Doob and Cauchy-Schwarz inequalities,
\begin{align*}
\esp{\underset{t\leq \tau_M(x_K,L\eta) \wedge \theta^K}{\sup}\left| M^K(t)\rr|}
&\leq  C
\esp{\langle M^K\rangle_{\tau_M(x_K,L\eta) \wedge \theta^K}}^{1/2}\\
&\leq C K^{-1/2}\esp{\int_0^{\tau_M(x_K,L\eta) \wedge \theta^K} X^K_R(s)(b_R+d_R)(X^K(s))ds}^{1/2}\\
&\leq C K^{-1/2}\tau_M(x_K,L\eta)^{1/2}.
\end{align*}
Using  Markov's inequality and the comparison of $\tau_M(x_K,L\eta)$ with  $\log\left(1/x^K_M\right)$
(cf Lemma~\ref{xKRx*R}$.iii)$),  we get
$$\pro{\int_0^{\tau_M(x_K,L\eta)\wedge \theta^K}|Y^K_R(s)|ds>\eta}\leq \frac{C}{\eta} K^{-1/2}\left(\log\left(1/x^K_M\right)\right)^{{3/2}}.$$
This proves~\eqref{controlintegraly}.\\

Now we prove~\eqref{controlsupy}. By definition of~$\tilde\sigma^K_{\eta},$
$$\Big\{\tilde\sigma^K_{\eta} \leq S^K\Big\} = \left\{\underset{s\leq S^K\wedge \tilde \sigma^K_{\eta}}{\sup}|Y^K_R(s)|> \eta\right\}.$$

For the sake of readability, let us denote
$$T^K := S^K\wedge\tilde\sigma^K_{\eta}.$$
To control the probability of the event above, we use~\eqref{controlHK} again. We  have that
\begin{align*}
&\int_0^{T^K}|Y^K_M(s)|\cdot|\partial_M G_R(x^K(s))|ds\\
&\qquad\leq C \int_0^{T^K}x^K_M(s)\frac{|Y^K_M(s)|}{x^K_M(s)} ds\leq C \int_0^{\tau_M(x_K,L\eta)}x^K_M(s)ds\\
&\qquad \leq C\int_0^{\tau_M(x_K,L\eta)}\frac1{F_M(x^K(s))} (x^K_M)'(s)ds\leq C(x^K_M(\tau_M(x_K,L\eta)) - x^K_M(0))\leq CL\eta,
\end{align*}
and
\begin{align*}
    \underset{s\leq T^K}{\sup}|R^K(s)|\leq& C\int_0^{T^K}|X^K_M(s) - x^K_M(s)|^2ds+C\int_0^{T^K}|Y^K_R(s)|^2ds+C\int_0^{T^K}|Y^K_R(s)|\cdot|Y^K_M(s)|ds\\
    \leq& C(L\eta)^2 + C L\eta^2 + C(L\eta)^{2}\leq C\eta^{2}.
\end{align*}
Since~$\eta$ and~$L$ can be chosen arbitrary small, we have
\begin{align*}
\pro{\underset{s\leq T^K}{\sup}|Y^K_R(s)|>\eta}\leq& \pro{\underset{s\leq T^K}{\sup}\left|M^K_{s}\right|>\eta/4}\\
\leq& C \eta^{-1} \esp{\langle M^K\rangle_{T^K}}^{1/2}
\leq C\eta^{-1} K^{-1/2} \left(\log\left(1/x^K_M\right)\right)^{1/2},
\end{align*}
which proves~\eqref{controlsupy}.

Finally, we prove~\eqref{thetaKR} using~\eqref{controlintegraly} and~\eqref{controlsupy}. Notice that
    $$\Big\{\theta^K_R\leq \tau_M(x_K,L\eta)\wedge\theta^K_M\Big\}\subseteq \left\{\underset{t\leq \tau_M(x_K,L\eta)\wedge\theta^K}{\sup}\frac{|Y^K_R(t)|}{x^K_r(t)}\geq 1\right\}.$$
Then, by Lemma~\ref{xKRx*R}, we know that for all $t\leq \tau_M(x_K,L\eta),$ if $\eta>0$ is small enough,
    $$x^K_R(t) \geq x^{\star}_R - C L\eta\geq x^{\star}_R/2.$$
     Hence, for $\eta>0$ small enough,
    \begin{align*}
        \Big\{\theta^K_R\leq \tau_M(x_K,L\eta)\wedge\theta^K_M\Big\}\subseteq&\left\{\underset{t\leq \tau_M(x_K,L\eta)\wedge\theta^K}{\sup}|Y^K_R(t)|\geq x^{\star}_R/2\right\}\\
        \subseteq&\Big\{\tilde\sigma^K_{L\eta} \leq \tau_M(x_K,L\eta)\wedge\theta^K\Big\}\\
        \subseteq&\Big\{\sigma^K_\eta \leq \tau_M(x_K,L\eta)\wedge\theta^K\Big\}\cup\Big\{\tilde\sigma^K_{L\eta} \leq \tau_M(x_K,L\eta)\wedge\theta^K\wedge\sigma^K_\eta\Big\}.
    \end{align*}
    Since the probabilities of the two last events above are controlled respectively by~\eqref{controlintegraly} and~\eqref{controlsupy}, the result is proved. 
\end{proof}


{We are now able to prove the main result of this section.}

\begin{proof}[Proof of Theorem~\ref{maindynamic}] We decompose
the proof of Theorem~\ref{maindynamic} in three steps. The first step is the approximation of $X^K_M$ by $x^K_M$ until the level of the mutant population dynamics reaches some  small macroscopic value. The second step is more classical and consists in showing that the approximation still holds true during an additional time length ~$T>0.$ In the last step, we
show that within these time intervals the mutant population dynamics reached levels $v_0<v^*$.\\


{\it Step~1.} In this first step, we prove that there exist ${C_0},C,L,\eta_0>0$ 
such that for any $\eta\in (0,\eta_0)$ and $K\geq 1$,
{\begin{equation}\label{maindyn1}
	\pro{\underset{t\leq \tau_M(x_K,L\eta)}{\sup}\bigg|\frac{X^K_M(t)}{ x^K_M(t)}-1\bigg|>C_0\eta}\leq C\cdot \frac1{\eta}\cdot \frac{1}{\sqrt{K x^K_M(0)}}.
\end{equation}
}

We use that
	\begin{align*}
	\frac{X^K_M(t)}{x^K_M(t)}
	=& 1 - \int_0^t \frac{X^K_M(s)}{x^K_M(s)}\, (b_M-d_M)(x^K(s))\,ds\\
	& + \int_{[0,t]\times\r_+} \frac1{K x^K_M(s)}\uno{u\leq K\cdot X^K_M(s-) \cdot b_M(X^K(s-))}\N^b_M(ds,du)\\
	& - \int_{[0,t]\times\r_+} \frac1{K x^K_M(s)}\uno{u\leq K\cdot X^K_M(s-) \cdot d_M(X^K(s-))}\N^d_M(ds,du).
	\end{align*}
Writing $\widetilde\N^b_M$ and $\widetilde\N^d_M$ the compensated measure of $\N^b_M$ and $\N^d_M,$ and introducing the locally square integrable martingale
	\begin{align*}
	E_M^K(t) := &\int_{[0,t]\times\r_+} \frac1{K x^K_M(s)}\uno{u\leq K\cdot X^K_M(s-) \cdot b_M(X^K(s-))}\widetilde\N^b_M(ds,du)\\
	& - \int_{[0,t]\times\r_+} \frac1{K x^K_M(s)}\uno{u\leq K\cdot X^K_M(s-) \cdot d_M(X^K(s-))}\widetilde\N^d_M(ds,du),
	\end{align*}
	we have
	$$\frac{X^K_M(t)}{x^K_M(t)}= 1 + E^K_M(t)+ \int_0^t \frac{X^K_M(s)}{x^K_M(s)}\left(F_M(X^K(s)) - F_M(x^K(s))\rr)ds.$$
	
 For convenience, let us denote  $$T^K=\tau_M(x_K,L\eta)\wedge \theta^K.$$  We observe that 
	\begin{equation*}
	\langle E^K_M\rangle_{T^K} = \int_0^{T^K} \frac1{K x^K_M(s)}\cdot\frac{X^K_M(s)}{x^K_M(s)}\cdot(b_M+d_M)(X^K(s)) \, ds\leq \frac{2C}{K} \int_0^{T^K} \frac{1}{x^K_M(s)}ds.
	\end{equation*}
	
	Consequently, for $T\leq T^K$,
	\begin{align*}
	&\underset{t\leq {T}}{\sup} \left|\frac{X^K_M(t)}{x^K_M(t)}-1\rr|\\
	&\qquad \leq \underset{t\leq {T}}{\sup}\left|E^K_M(t)\rr|+ C\int_0^{T}\left(\left|X^K_M(s) - x^K_M(s)\rr| + \left|X^K_R(s) - x^K_R(s)\rr|\rr)ds\\
&\qquad \leq  \underset{t\leq {T}}{\sup}\left|E^K_M(t)\rr|+ C\int_0^{T}x^K_M(s)\cdot\underset{r\leq s}{\sup}\left|\frac{X^K_M(r)}{x^K_M(r)}-1\rr|ds + C\int_0^{T}\left|X^K_R(s) - x^K_R(s)\rr|ds.
	\end{align*}
 So, by Gronwall's lemma, for any $T\leq \tau_M(x_K,L\eta)\wedge\theta^K,$
	\begin{equation}\label{principal}
	\underset{t\leq {T}}{\sup} \left|\frac{X^K_M(t)}{x^K_M(t)}-1\rr|\leq \left(\underset{t\leq {T}}{\sup}|E^K_M(t)| + C\int_0^T\left|X^K_R(s) - x^K_R(s)\rr|ds\rr)e^{C\int_0^T x^K_M(s)ds}.
	\end{equation}
Besides, by Burkholder-Davis-Gundy's inequality, 
	\begin{equation*}
	\esp{\underset{t\leq {T^K}}{\sup}|E^K_M(t)|}\leq C\esp{\langle E^K_M\rangle_{T^K}^{1/2}}\leq C K^{-1/2} \esp{\int_0^{T^K}\frac{1}{x^K_M(s)}ds}^{1/2},
	\end{equation*}
    {where the second inequality can be obtained either by Cauchy-Schwarz' inequality or Jensen's inequality.}
 
	Besides  $F_M(x^K(t))$ is lower-bounded on $[0,\tau_M(x_K,L\eta)]$ by some $\kappa>0$ thanks to Lemma~\ref{xKRx*R} since $F_M(x^{\star})>0$ if $\eta$ is chosen small enough. Then $x^K_M(s)\geq e^{\kappa s}x^K_M(0)$ and 
	\begin{equation}\label{ekmbdg}
	\esp{\underset{t\leq T^K}{\sup}|E^K_M(t)|}\leq 
 C \cdot \frac{1}{\sqrt{\kappa}} \cdot \frac{1}{\sqrt{K x^K_M(0)}},
	\end{equation}
	where the value of $C$ has changed in the last inequality, but is still independent of~$K.$\\
Let us recall that, thanks to~\eqref{xkm2}, we have
    \begin{equation*}
    \int_0^{\tau_M(x_K,L\eta)} x^K_M(s)ds \leq CL\eta.
    \end{equation*}
So, \eqref{principal} can be rewritten as
	\begin{equation}\label{eqC1}
	\underset{t\leq \tau_M(x_K,L\eta)\wedge\theta^K}{\sup} \left|\frac{X^K_M(t)}{x^K_M(t)}-1\rr|\leq C_1 \underset{t\leq \tau_M(x_K,L\eta)\wedge\theta^K}{\sup}|E^K_M(t)| + C_1 \int_0^{\tau_M(x_K,L\eta)\wedge\theta^K} \left|Y^K_R(s)\rr|ds,
	\end{equation}
 {for some positive constant~$C_1>0$.}
By Markov's inequality and~\eqref{ekmbdg},
$$\pro{\underset{t\leq T^K}{\sup}|E^K_M(t)| >\eta}\leq \frac{C}{\eta\sqrt{K x^K_M(0)}}.$$

Besides, by~\eqref{controlintegraly} from Lemma~\ref{controltps},
\begin{align*}
&\pro{\int_0^{T^K} \left|Y^K_R(s) \rr|ds >\eta}\leq \pro{\sigma^K_{\eta} \leq T^K} \\
&\qquad \qquad \qquad \qquad  \leq \frac{C}{\eta}K^{-1/2}\left(\log\left(\frac{1}{x^K_M(0)}\right)\right)^{{3/2}}
{\leq \frac{C}{\eta}K^{-1/2} \frac{1}{\left(x^K_M(0)\right)^{1/2}},}
\end{align*}
{where we have used that, for any~$x>0$ small enough, $\log (1/x)^3 \leq 1/x.$}

We obtain from the three last inequalities that
$$\pro{\underset{t\leq T^K}
{\sup} \bigg|\frac{X^K_M(t)}{x^K_M(t)}-1\bigg|> 2C_1\eta}\leq  \frac{C}{\eta}\cdot \frac{1}{\sqrt{K x^K_M(0)}}.$$
Using now that for $\eta\in (0,1)$, we obtain
\begin{align*}
\Big\{\theta^K_M<\infty, \, \theta^K_M \leq \tau_M(x_K,L\eta)\Big\}
\subseteq \left\{\underset{t\leq T^K}{\sup}\bigg|\frac{X^K_M(t)}{x^K_M(t)}-1\bigg|>\eta\right\},
\end{align*}
and $\eqref{thetaKR}$  proves~\eqref{maindyn1} with $C_0=2C_1$. \\

{\it Step~2.} We fix $T>0$ and   we prove now that the approximation of $X^K_M$ by $x^K_M$ is still valid on the time 
interval $[\tau_M(x_K,L\eta), \tau_M(x_K,L\eta)+T]$. The fact that the mutant population dynamics has left the neighborhood of $0$ makes such estimates more classical.
More precisely, we introduce the notation 
$$\tilde X^K_\bullet(t) := X^K_\bullet(t + \tau_M(x_K,L\eta)), \qquad \tilde x^K_\bullet(t) := x^K_\bullet(t + \tau_M(x_K,L\eta))$$
for  $\bullet\in\{R,M\}$.
First, we observe that
$$\gamma_{T,\eta}=\underset{t\leq T, K\geq 1}{\inf} \tilde x^K_M(t) \, >0,$$
since the boundary $\R_+\times \{0\}$ is stable 
for the flow $\phi$, which is {continuously differentiable}, and the initial value $\tilde x^K(0)$ belongs to a compact set of $(\R_+^*)^2$ (thanks to Lemma~\ref{xKRx*R})).

Then
$$\pro{\underset{t\leq T}{\sup}\frac{\left|\tilde X^K_M(t)-\tilde x^K_M(t)\right|}{\tilde x^K_M(t)}>\eta}\leq \pro{\underset{t\leq T}{\sup}\left|\tilde X^K_M(t)-\tilde x^K_M(t)\right| \, > \, \eta \, \gamma_{T,\eta}},$$
and it remains to prove that for any $\eta>0$,
\begin{equation}\label{tildeXKxK}
\pro{\underset{t\leq T}{\sup}\left|\tilde X^K_M(t)-\tilde x^K_M(t)\right|>\eta}\leq C K^{-1/2}.
\end{equation}

%
%
%
This result is a classical approximation of population processes by dynamical systems on finite time intervals, see Ethier-Kurtz \cite{ethierkurtz}. For sake of completeness and to give explicit bounds, let us prove the result here.
We use again  a stopping time which ensures boundedness :
$$\tilde\theta^K := \inf\Big\{t>0~:~\tilde X^K_M(t) > 2 \tilde x^K_M(t)\textrm{ or }\tilde X^K_R(t) > 2 \tilde x^K_R(t)\Big\}.$$

We have, for any $t\leq \tilde\theta^K$,
$$\left|\tilde X^K_\bullet (t) - \tilde x^K_\bullet(t)\rr|\leq \left|M^K_\bullet(t)\rr| + C\int_0^t\left(|\tilde X^K_R(s) - \tilde x^K_R(s)| + |\tilde X^K_M(s) - \tilde x^K_M(s)|\rr)ds,$$
where $M^K_R,M^K_M$ are locally square integrable martingales satisfying
$$\langle M^K_\bullet\rangle_{t\wedge\tilde\theta^K} \leq C t K^{-1}.$$

Then, Gronwall's lemma implies that
$$\underset{t\leq T\wedge\tilde\theta^K}{\sup}\left(\left|\tilde X^K_M(t) - \tilde x^K_M(t)\rr| + \left|\tilde X^K_R(t) - \tilde x^K_R(t)\rr|\rr)\leq \left(\underset{t\leq T\wedge\tilde\theta^K}{\sup}|M^K_R(t)| + \underset{t\leq T\wedge\tilde\theta^K}{\sup}|M^K_M(t)|\rr)e^{CT}.$$

Hence, using Markov's and Burkholder-Davis-Gundy's inequalities, we obtain
\begin{equation}\label{tildex}
\pro{\underset{t\leq T\wedge\tilde\theta^K}{\sup}\left(\left|\tilde X^K_M(t) - \tilde x^K_M(t)\rr| + \left|\tilde X^K_R(t) - \tilde x^K_R(t)\rr|\rr)>\eta'}\leq \frac1{\eta'} C_{T} K^{-1/2}.
\end{equation}

Finally, let us now dismiss the stopping time~$\tilde\theta^K.$ Let $A^K_{\eta'}(T)$ be the event
$$A^K_{\eta'}(T) := \left\{\underset{t\leq T}{\sup}\left(\left|\tilde X^K_M(t) - \tilde x^K_M(t)\rr| + \left|\tilde X^K_R(t) - \tilde x^K_R(t)\rr|\rr)>\eta'\rr\}.$$

{
Choosing~$\eta'$ smaller than
$$\underset{t\leq T,K\geq 1}{\inf}\tilde x^K_R(t) \wedge\underset{t\leq T,K\geq 1}{\inf}\tilde x^M_R(t),$$
guarantees that
$$\left\{\tilde \theta^K\geq T\right\}\subseteq A^K_{\eta'}(T\wedge\tilde\theta^K),$$
whence
\begin{align*}
\pro{A^K_{\eta'}(T)} =& \pro{A^K_{\eta'}(T) \cap\left\{\tilde\theta^K\geq T\right\}} + \pro{A^K_{\eta'}(T) \cap\left\{\tilde\theta^K< T\right\}}\\
\leq& \pro{A^K_{\eta'} (T\wedge\tilde\theta^K)} + \pro{A^K_{\eta'}(T)\cap A^K_{\eta'}(T\wedge\tilde\theta^K)}\\
\leq& 2\pro{A^K_{\eta'} (T\wedge\tilde\theta^K)}.
\end{align*}
}

Since~\eqref{tildex} provides a control of $\pro{A^K_{\eta'}(T\wedge\tilde\theta^K)},$ \eqref{tildeXKxK} is proved.\\

{\it Step~3.} We can now conclude the proof of the theorem
using the two steps. Combining the two results
yields the expect control until time
$\tau_M(x_K,L\eta)+T$, for any $\eta\in (0,\eta_0)$
and $T>0$. We just need to check that 
$T_v$ can be chosen (independently of $K$ and $x_K$)
so that $\tau_M(x_K,L\eta)+T_v\geq \tau_M(x_K,v)$. This is indeed the case, as can be seen from
\eqref{splittimes} with $u_1=L\eta$ for instance.
\end{proof}

\section{Thresholds hitting times}\label{hittingtimes}

In this section, we use the results of the two previous sections to approximate the hitting times of the process $(N^K_M(t))_t$. Let us recall that, for any  $n\geq 1$, we have defined in~\eqref{temps-mut} the stopping time
$$T^K_M(n_0, n) := \inf\left\{t>0~:~N^K_M(t)\geq n\right\}.$$
\\
The following main result of this section complements the results of \cite{BR, BHKK}.  It gives a quantitative information on the law of hitting times of thresholds of order $K$ but also for thresholds of size less than $K$. We recall that~$v_\star$ is defined at the beginning of Section~\ref{defvstar}.
\begin{theorem}\label{approxtps}
Grant Assumptions~\ref{assumpt},~\ref{assumptCI} and~\ref{hypdom}. 
Let $(\zeta_K)_K$ be some sequence which tends to infinity and satisfies $\zeta_K/K\rightarrow v$ as $K\rightarrow \infty$, for some $v \in [0,v_{\star})$.
Then,  for any $\varepsilon>0$, 
$$\lim_{K\rightarrow \infty}\P\left( \bigg\vert  T^K_{M}(N^K(0), \zeta_K) - \frac{\log(\zeta_K/W)}{r_{\star}}-  \tau(v) \bigg\vert \geq \varepsilon\,;\, W>0 \right)=0,$$
 where $\tau$ is the increasing continuous function such that $\tau(0)=0$ defined in 
 \eqref{def:tau} and
$$W=\lim_{t\rightarrow\infty}e^{-r_{\star}t}Z(t)\stackrel{d}{=}\frac{d_{\star}}{b_{\star}} \delta_0+ \frac{r_{\star}}{b_{\star}}\, \mu_{ r_{\star}/b_{\star}},\quad \mu_a(dx)=1_{R_+}(x)ae^{-ax}dx.$$
 \end{theorem}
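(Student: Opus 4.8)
The plan is to split the invasion into the branching phase and the dynamical-system phase at an intermediate threshold. I would fix a sequence $\xi_K\to\infty$ with $\xi_K<\zeta_K$ for $K$ large and $\xi_K\ll K/(\log K)^a$, where $a=1$ if $\partial_R F_R(x^\star)<0$ and $a=2$ otherwise (for instance $\xi_K=\lfloor(\zeta_K\wedge K(\log K)^{-a-1})^{1/2}\rfloor$), and write $\mathcal T_K:=T^K_M(N^K(0),\xi_K)$. Since $N^K_M(\mathcal T_K)=\xi_K<\zeta_K$, by the strong Markov property $T^K_M(N^K(0),\zeta_K)=\mathcal T_K+\mathcal S_K$, where $\mathcal S_K$ is the hitting time of level $\zeta_K$ by the shifted process $\widetilde X^K(\cdot):=X^K(\mathcal T_K+\cdot)$, which, conditionally on $\mathcal F_{\mathcal T_K}$, is the stochastic flow of Section~\ref{sectiondyn} started from $\widetilde x_K:=(N^K_R(\mathcal T_K)/K,\xi_K/K)$. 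I would estimate $\mathcal T_K$ with the branching approximation of Section~\ref{sectionbranch} and $\mathcal S_K$ with Section~\ref{sectiondyn}.

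For $\mathcal T_K$: by Corollary~\ref{mainbranch1}, $\sup_{t\le\mathcal T_K}|N^K_M(t)/Z(t)-1|\to0$ in probability on $\{W>0\}$. On the event where this supremum is $\le\eta$, since $Z$ has unit upward jumps one gets $\sigma^Z_{\lceil\xi_K/(1+\eta)\rceil}\le\mathcal T_K\le\sigma^Z_{\lceil\xi_K(1+\eta)/(1-\eta)\rceil}$ with $\sigma^Z_n:=\inf\{t:Z(t)\ge n\}$. On $\{W>0\}$ one has $\sigma^Z_n\uparrow\infty$ and $Z(\sigma^Z_n)=n$, hence $W(\sigma^Z_n)=n\,e^{-r_\star\sigma^Z_n}\to W$ a.s., giving $\sigma^Z_n-r_\star^{-1}\log n\to-r_\star^{-1}\log W$ a.s.\ on $\{W>0\}$; inserting this in the sandwich and sending $\eta\to0$ yields $\mathcal T_K-r_\star^{-1}\log(\xi_K/W)\to0$ in probability on $\{W>0\}$. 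I would also record, from Lemma~\ref{controlRbranch} applied to the sequence $\xi_K$ (so that its bound vanishes), that with probability $\to1$ on $\{W>0\}$ one has $\mathcal T_K<T^K_R(N^K(0),L\xi_K)\wedge s_K$, whence $\delta_K:=|N^K_R(\mathcal T_K)/K-x^\star_R|\le L\xi_K/K$ (resp.\ $\le L\sqrt{\xi_K/K}$ if $\partial_R F_R(x^\star)=0$), so that $\delta_K\log(K/\xi_K)\to0$; this places $\widetilde x_K$ in the admissible range of Theorem~\ref{maindynamic}.

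For $\mathcal S_K$: conditionally on $\mathcal F_{\mathcal T_K}$ and on the event of the previous paragraph, I would apply Theorem~\ref{maindynamic} to $\widetilde X^K$ with target level $v'\in(v,v_\star)$: for $\eta$ small, off an event of conditional probability $O(1/\sqrt{\xi_K})$ one has $\sup_{t\le\tau_M(\widetilde x_K,v')}|\widetilde X^K_M(t)/\phi_M(\widetilde x_K,t)-1|\le\eta$. Since $F_M(\phi(\widetilde x_K,\cdot))\ge\eta_0>0$ on $[0,\tau_M(\widetilde x_K,v')]$ by Lemma~\ref{lemmeetav}, $\phi_M(\widetilde x_K,\cdot)$ grows at least exponentially with rate $\eta_0$, so the same multiplicative-to-additive sandwich as above gives $|\mathcal S_K-\tau_M(\widetilde x_K,\zeta_K/K)|\le\eta_0^{-1}\log\frac{1+\eta}{1-\eta}$ on that event (for $K$ large, so that $\zeta_K/(K(1-\eta))<v'$). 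Finally, Proposition~\ref{tKv}, applied with $x_M=\xi_K/K$ and a fixed $\bar v\in(v,v_\star)$ — using that $\widetilde x_K\to x^\star$ uniformly over $|x_R-x^\star_R|\le\delta_K$, together with $\widetilde x_K\in D(\eta)$ in the partially hyperbolic case — yields $\tau_M(\widetilde x_K,\zeta_K/K)=r_\star^{-1}\log(\zeta_K/\xi_K)+\tau(\zeta_K/K)+o(1)$, and $\tau(\zeta_K/K)\to\tau(v)$ by continuity of $\tau$ with $\tau(0)=0$. Adding the estimates for $\mathcal T_K$ and $\mathcal S_K$, on $\{W>0\}$ and off events of probability $o(1)$, one gets $T^K_M(N^K(0),\zeta_K)=r_\star^{-1}\log(\zeta_K/W)+\tau(v)+o_P(1)+O(\eta)$; letting $K\to\infty$ and then $\eta\to0$ gives the theorem.

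The hard part is the dynamical-system phase, where Theorem~\ref{maindynamic} must be run from the \emph{random} resident level $N^K_R(\mathcal T_K)/K$: this requires the constant there to be uniform over admissible initial conditions with $|x_R-x^\star_R|\log(1/x_M)$ below a fixed threshold (so that no union bound over the roughly $\xi_K$ possible values of $N^K_R(\mathcal T_K)$ is needed), combined with a clean conditioning on $\mathcal F_{\mathcal T_K}$ via the strong Markov property. The two sandwich steps converting ratio estimates into hitting-time estimates are routine but rely essentially on the strict positivity of the growth rates $r_\star$ (for $Z$) and $\eta_0$ (for $\phi_M$), and one must keep track throughout, when invoking Proposition~\ref{tKv} and Lemma~\ref{lemmeetav}, of the admissibility constraint on $\widetilde x_K$ provided by Lemma~\ref{controlRbranch}.
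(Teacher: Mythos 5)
Your proof is correct and follows essentially the same route as the paper: split at an intermediate level $\xi_K\ll K/(\log K)^a$ with $\xi_K<\zeta_K$, treat $[0,\mathcal T_K]$ by the branching coupling (Corollary~\ref{mainbranch1} plus the a.s.\ convergence of $W(t)$, which is exactly Proposition~\ref{tpsbranch2}), and treat the second window via the strong Markov property, Theorem~\ref{maindynamic}, the sandwich argument (Lemma~\ref{titilemme}) and Proposition~\ref{tKv}, checking admissibility of $\widetilde x_K$ through Lemma~\ref{controlRbranch} — this is precisely the content that the paper packages as Lemma~\ref{tpsdyn1}. The only cosmetic difference is that you re-derive these two auxiliary statements inline rather than citing them, and you choose $\xi_K\ll K/(\log K)^a$ with $a$ depending on the case, whereas the paper simply takes $\xi_K\ll K/(\log K)^2$ uniformly (noting $K/(\log K)^2\ll K/\log K$), which avoids case distinctions.
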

 
 \medskip To prove this result, we first focus on the submacroscopic phase and use the branching process approximation to determine the asymptotic behavior of the  hitting times of the mutant population process. For this purpose, we need the following lemma which guarantees the hitting time $T^K_M(N^K(0),\xi_K)$ to go to infinity, for suitable sequence $(\xi_K)$.
 \begin{lemme}\label{tpsinfini}
 Under Assumptions~\ref{assumpt} and~\ref{assumptCI}, for any sequence~$\xi$ tending to infinity and  satisfying   $\xi_K\ll  K/\log K$  if $\partial_R F_R(x^{\star})<0$  (resp.  $ \xi_K\ll  K/(\log K)^2$ if $\partial_R F_R(x^{\star})=0$), the hitting time $T^K_M(N^K(0),\xi_K)$ goes to infinity almost surely as $K$ goes to infinity.
\end{lemme}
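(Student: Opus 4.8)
The plan is to fix an arbitrary horizon $T>0$ and to prove that, almost surely, $T^K_M(N^K(0),\xi_K)>T$ for every $K$ large enough; a countable union over $T\in\n$ then yields the lemma. Introduce the stopping time $\rho^K:=\inf\{t\ge 0: X^K_R(t)\notin[x^\star_R/2,2x^\star_R]\text{ or }N^K_M(t)\ge\xi_K\}$, which by definition satisfies $\rho^K\le T^K_M(N^K(0),\xi_K)$, so that it suffices to show that, almost surely, $\rho^K>T$ for $K$ large, i.e. that neither of the two constraints defining $\rho^K$ is activated before time $T$.

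The first ingredient is a pathwise domination of $N^K_M$ by a fixed branching process. On $[0,\rho^K]$ the process $X^K$ stays in the compact set $\mathcal K:=[x^\star_R/2,2x^\star_R]\times[0,1]$ (for $K$ large enough that $\xi_K\le K$), so the mutant individual birth rate is bounded by $\bar b:=\sup_{\mathcal K}b_M<\infty$. Driving by the same Poisson measure $\N^b_M$, one couples $N^K_M$ with the Yule process $Y$ solving $Y(t)=1+\int_{[0,t]\times\r_+}\uno{u\le\bar b\,Y(s-)}\N^b_M(ds,du)$, and a straightforward induction over jump times gives $N^K_M(t)\le Y(t)$ for all $t\le\rho^K$ (the deaths of $N^K_M$ only reinforce the inequality). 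The process $Y$ is independent of $K$ and non-explosive, hence $Y(t)<\infty$ for all $t$ almost surely and $\sigma_n:=\inf\{t:Y(t)\ge n\}\to+\infty$ as $n\to\infty$, almost surely. Since $\xi_K\to\infty$, this gives: almost surely, $\sigma_{\xi_K}>T$ for $K$ large enough.

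The second ingredient is an almost sure control of the resident density, namely $\sup_{t\le T\wedge\rho^K}|X^K_R(t)-x^\star_R|\to0$ a.s. Writing $Y^K_R:=X^K_R-x^\star_R$, the semimartingale decomposition of $X^K_R$, together with $F_R(x^\star)=0$ and the local Lipschitz property of $F_R$ (Assumption~\ref{assumpt}$(R)$), gives for $t\le T\wedge\rho^K$ the bound $|Y^K_R(t)|\le|Y^K_R(0)|+\sup_{s\le t}|M^K_R(s)|+C\int_0^t(|Y^K_R(s)|+X^K_M(s))\,ds$, where $M^K_R$ is the martingale part of $X^K_R$. On $[0,\rho^K]$ one has $X^K_M(s)\le\xi_K/K$, so Gronwall's lemma yields $\sup_{t\le T\wedge\rho^K}|Y^K_R(t)|\le C_T(|Y^K_R(0)|+\sup_{t\le T\wedge\rho^K}|M^K_R(t)|+\xi_K/K)$. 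Here $|Y^K_R(0)|\le C/K\to0$ by Assumption~\ref{assumptCI}, $\xi_K/K\to0$ since $\xi_K\ll K/\log K$ (and likewise $\xi_K\ll K/(\log K)^2\Rightarrow\xi_K=o(K)$ in the case $\partial_R F_R(x^\star)=0$), and $\langle M^K_R\rangle_{T\wedge\rho^K}\le CT/K$; hence a fourth-moment Burkholder--Davis--Gundy estimate gives $\pro{\sup_{t\le T\wedge\rho^K}|M^K_R(t)|\ge\delta}\le C\delta^{-4}K^{-2}$, which is summable in $K$, so Borel--Cantelli yields $\sup_{t\le T\wedge\rho^K}|M^K_R(t)|\to0$ almost surely and the claim follows. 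The argument is insensitive to the sign of $\partial_R F_R(x^\star)$: the dichotomy of Section~\ref{sectionbranch} is caused by time windows of length of order $\log K$, whereas here the horizon $T$ is fixed. (One could also simply invoke the almost sure law of large numbers for $X^K$ on $[0,T]$, whose limiting trajectory issued from $(x^\star_R,0)$ is the constant $(x^\star_R,0)$.)

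Combining the two ingredients finishes the proof: almost surely, for $K$ large enough one has simultaneously $\sigma_{\xi_K}>T$ and $\sup_{t\le T\wedge\rho^K}|X^K_R(t)-x^\star_R|<x^\star_R/2$. The latter prevents the resident constraint from being activated before $T$; and on $[0,T\wedge\rho^K]$ one has $N^K_M(t)\le Y(t)<\xi_K$, because $t\le T<\sigma_{\xi_K}$, which prevents the mutant constraint from being activated before $T$ either. Hence $\rho^K>T$, and therefore $T^K_M(N^K(0),\xi_K)>T$, for all $K$ large enough, almost surely. The crux of the argument — and the only point going beyond the in-probability statements already contained in Corollary~\ref{mainbranch1} — is this passage to an almost sure conclusion, which is exactly why one needs the summable-in-$K$ tail bound for $\sup_{t\le T}|M^K_R(t)|$ coming from a fourth-moment (rather than the usual second-moment) martingale estimate, combined with Borel--Cantelli.
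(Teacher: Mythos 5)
Your proof is correct, and it takes a genuinely different route from the paper's. The paper invokes Lemma~\ref{controlRbranch} to get, for each $K\geq K_\eps$, an event of probability at least $1-\eps$ on which $b_M(X^K(t))\leq b_\star+1$ up to $T^K_M$, couples $N^K_M$ there with a pure-birth process $Z_+$ of rate $b_\star+1$, and uses the a.s.\ finiteness of $\bar W_+=\sup_t Z_+(t)e^{-b_+ t}$ to lower-bound the hitting time $T^K_Z$; it then asserts $\pro{T^K_M\to\infty}\geq 1-\eps$. You instead fix a deterministic horizon $T$, introduce the self-contained truncation $\rho^K$, bound $b_M$ by $\bar b=\sup_{\mathcal K}b_M$ on the compact $\mathcal K$, and dominate $N^K_M$ by a $K$-independent Yule process whose non-explosion (so that $\sigma_{\xi_K}\to\infty$ a.s.) replaces the paper's use of $\bar W_+<\infty$; you control the resident density by Gronwall and a fourth-moment BDG bound that is summable in $K$, and invoke Borel--Cantelli. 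The two couplings are morally the same, but your route is more elementary (no appeal to Lemma~\ref{controlRbranch}) and, more importantly, it genuinely delivers the almost sure statement: your Borel--Cantelli step is exactly the ingredient needed to pass from "for each $K$, bad event has probability $\leq\eps$" to "a.s., bad events occur only finitely often", whereas the paper's final step, as written, conflates a $K$-by-$K$ in-probability bound with an a.s.\ conclusion (the coupling event $\{\forall t\leq T^K_M: N^K_M(t)\leq Z_+(t)\}$ depends on $K$, and $\pro{A^K}\geq 1-\eps$ for each $K$ does not give $\pro{\cap_K A^K}\geq 1-\eps$). In practice the paper only uses the lemma through the in-probability bound $\pro{T^K_M<t_0}\to 0$ (see the proof of Proposition~\ref{tpsbranch2}), which its argument does establish; your version actually proves the statement as literally written.
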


\begin{proof}
For the sake of notation, let us write $T^K_M := T^K_M(N^K(0),\xi_K)$ in all this proof.

By Lemma~\ref{controlRbranch}, for any $\eps>0$, there exists some $K_\eps\in\n^*$ such that, for all $K\geq K_\eps$,
$$\pro{\forall t\leq T^K_M,~b_M(X^K(t))\leq b_\star + 1}\geq 1-\eps.$$
In particular, we can couple the process $N^K_M$ with a branching process $Z_+$ starting at $Z_+(0)=1$ with individual birth rate~$b_+:=b_\star + 1$ and without death, such that, for all $K\geq K_\eps,$
$$\pro{\forall t\leq T^K_M,~N^K_M(t)\leq Z_+(t)}\geq 1-\eps.$$
It is known that $Z_+(t)e^{-b_+ t}$ is a local martingale converging to some finite r.v.~$W_+$ that is finite a.s. {(see e.g. Theorem~1 of \cite[p. 111]{AN} and Theorem~2 of \cite[p. 112]{AN} in Section~III.7)}. As a consequence $\bar W_+ := \underset{t\geq 0}{\sup} Z_+(t)e^{-b_+t}$ is also finite a.s. Introducing $T_Z^K$ the hitting time of the value~$\xi_K$ by the process~$Z_+$, we have
$$\left\{N^K_M(T^K_M)\leq Z_+(T^K_M)\right\}\subseteq\left\{T^K_M \geq T^K_Z\right\}.$$

Besides, by definition of $T^K_Z,$ on the event $\{T^K_Z<\infty\}$,
$$\lceil \xi_K\rceil = Z(T^K_Z) \leq \bar W_+ e^{b_+ T^K_Z},$$
which implies
$$T^K_Z \geq \frac{1}{b_+}\log(\lceil\xi_K\rceil/\bar W_+).$$

Consequently, for all~$\eps>0$,
$$\pro{T^K_M\underset{K\rightarrow\infty}{\longrightarrow} +\infty}\geq 1-\eps.$$
Letting $\eps$ go to zero proves the result.
\end{proof}

{The previous lemma allows us to obtain an asymptotic expansion for the hitting time~$T^K_M(N^K(0),\xi_K)$. One can note that the statement of Proposition~\ref{tpsbranch2} is stronger than the one of Lemma~\ref{tpsinfini}.}
 
 \begin{prop}\label{tpsbranch2}
 Under Assumptions~\ref{assumpt} and~\ref{assumptCI},  for any $\varepsilon>0$,
$$\lim_{K\rightarrow \infty} \P\left( \left| T^K_{M}(N^K(0), \xi_K) - \frac{\log(\xi_K/W)}{r_\star}\right| \geq \varepsilon \,  ; \, W>0  \right)=0$$ for any sequence~$\xi$ tending to infinity and  satisfying   $\xi_K\ll  K/\log K$ if $\partial_R F_R(x^{\star})<0$ (resp.  $ \xi_K\ll  K/(\log K)^2$  if $\partial_R F_R(x^{\star})=0$).
\end{prop}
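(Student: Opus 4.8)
The plan is to evaluate the branching approximation of Corollary~\ref{mainbranch1} at the random hitting time $T^K_M := T^K_M(N^K(0),\xi_K)$ itself, and to combine it with the almost sure convergence of the martingale $W(t)=Z(t)e^{-r_\star t}$ towards $W$. The heuristic is that, on survival, $N^K_M(t)\approx Z(t)\approx We^{r_\star t}$ before the macroscopic scale, so the time to reach level $\xi_K$ is $\approx r_\star^{-1}\log(\xi_K/W)$.

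First I would record the elementary observation that, since $N^K_M$ only jumps by $+1$, at its hitting time one has $\xi_K\leq N^K_M(T^K_M)<\xi_K+1$, so that $N^K_M(T^K_M)/\xi_K\to 1$ deterministically as $K\to\infty$ (recall $\xi_K\to\infty$). Fix $\eta\in(0,1)$ and set
$$G^\eta_K := \Big\{\sup_{t\leq T^K_M}\big|N^K_M(t)/Z(t)-1\big|\leq\eta\Big\}\cap\{W>0\}.$$
On $G^\eta_K$ the hitting time is finite (otherwise $N^K_M$ would stay bounded while $Z(t)\to\infty$ on $\{W>0\}$, contradicting the ratio bound), and evaluating the bound at $t=T^K_M$ gives
$$\frac{\xi_K}{1+\eta}\leq Z(T^K_M)<\frac{\xi_K+1}{1-\eta}.$$
By Corollary~\ref{mainbranch1}, $\P\big(\{W>0\}\setminus G^\eta_K\big)\to 0$ for the admissible ranges of $\xi_K$, so it suffices to estimate the target probability in restriction to $G^\eta_K$.

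Next, on $\{W>0\}$ write $Z(T^K_M)=W(T^K_M)e^{r_\star T^K_M}$, so that
$$T^K_M-\frac{\log(\xi_K/W)}{r_\star}=\frac{1}{r_\star}\log\!\Big(\frac{Z(T^K_M)}{\xi_K}\Big)+\frac{1}{r_\star}\big(\log W-\log W(T^K_M)\big).$$
On $G^\eta_K$ the first term lies, by the sandwiching above, in an interval contained in $\big[-r_\star^{-1}\log(1+\eta),\, r_\star^{-1}\log((1+1/\xi_K)/(1-\eta))\big)$, which shrinks to $\{0\}$ by letting $K\to\infty$ and then $\eta\to 0$; hence for $\eta$ small enough it is at most $\varepsilon/2$ in absolute value, uniformly in $K$ large. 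For the second term, Lemma~\ref{tpsinfini} gives $T^K_M\to\infty$ almost surely, and since $t\mapsto W(t)$ converges almost surely to $W$, a composition argument yields $W(T^K_M)\to W$ almost surely; on $\{W>0\}$ continuity of $\log$ at $W$ then gives $\log W(T^K_M)\to\log W$ almost surely, hence in probability. Choosing $\eta$ small enough, for large $K$ the probability in the statement is therefore bounded by $\P\big(\{W>0\}\setminus G^\eta_K\big)+\P\big(r_\star^{-1}|\log W-\log W(T^K_M)|\geq\varepsilon/2\,;\,W>0\big)$, and both of these tend to $0$ — the first by Corollary~\ref{mainbranch1}, the second by the almost sure convergence just established. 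Letting $\varepsilon$ be arbitrary concludes.

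The two substantive inputs — the coupling estimate of Corollary~\ref{mainbranch1} and the divergence $T^K_M\to\infty$ of Lemma~\ref{tpsinfini} — are already in hand, so the real work is the bookkeeping around the random evaluation time: checking that $T^K_M<\infty$ on the good event, that $W(T^K_M)$ is genuinely close to $W$ (which is precisely why almost sure, rather than merely in-probability, convergence of $W(t)$ together with $T^K_M\to\infty$ is used), and that the at-most-one overshoot of $N^K_M$ past $\xi_K$ is negligible after taking logarithms. I expect this last bundle of technical points to be the main, albeit minor, obstacle; no new estimates should be needed.
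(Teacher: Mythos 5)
Your proposal is correct and follows essentially the same route as the paper: both rely on Corollary~\ref{mainbranch1} to sandwich $N^K_M$ between $(1\pm\eta)Z$ up to $T^K_M$, on the almost sure convergence of the martingale $W(t)=Z(t)e^{-r_\star t}$, and on Lemma~\ref{tpsinfini} to send $T^K_M$ to infinity, then take logarithms at $t=T^K_M$ using the overshoot bound $\xi_K\leq N^K_M(T^K_M)<\xi_K+1$. The only cosmetic difference is that you package the error as an explicit algebraic decomposition and invoke a composition argument for $W(T^K_M)\to W$, whereas the paper fixes a deterministic time $t_0$ beyond which $W(t)$ is $\eta$-close to $W$ and shows $T^K_M\geq t_0$ with high probability; these are two presentations of the same estimate.
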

\begin{proof} Consider the case $\partial_R F_R(x^{\star})<0$ and let $(\xi_K)_K$ be a sequence tending to infinity  and  satisfying   $\xi_K\ll  K/\log K$. We apply Corollary~\ref{mainbranch1}.
Then for  $\varepsilon>0$, there exists $K_0$ such that for any $K\geq K_0$,
$$ \mathbb{P}\bigg(\forall t\in [0, T^K_M], \,   (1-\varepsilon) Z(t)\leq N^K_M(t) \leq  (1+\varepsilon) Z(t) \, \vert  \, \ W>0\bigg)\geq 1-\varepsilon,$$
where we write $T^K_M=T^K_{M}(N^K(0),\xi_K)$ for convenience.
Since $Z(t)\exp(-r_{\star}t)$ converges to $W$ almost surely  as $t$ tends to infinity, 
$$\P\left(\sup_{t\geq u} \left| \frac{Z(t)}{\exp(r_{\star}t)W} - 1\right| \geq \varepsilon \, \vert  \, W>0 \right)\rightarrow 0$$
as $u$ goes to infinity. Then  there exists $t_0$ such that
$$\P\bigg( \forall t\geq t_0, \, (1-\varepsilon) W e^{r_{\star}t} \leq Z(t)\leq (1+\varepsilon) We^{r_{\star}t}    \, \vert  \, W>0\bigg)\geq 1-\varepsilon.$$

In addition, by Lemma~\ref{tpsinfini}, we know that $\P(T^K_M\geq t_0 \, \vert \, W>0)\geq 1-\varepsilon$ for $K$ large enough.
Combining these estimates,
$$ \mathbb{P}\bigg(\forall t\in [t_0, T^K_M], \,   (1-\varepsilon)^2 W e^{r_{\star}t} \leq N^K_M(t) \leq  (1+\varepsilon)^2W e^{r_{\star}t} \,   \vert  \, W>0\bigg)\geq 1-3\varepsilon.$$
Adding that 
$\xi_K \leq N^K_M(T^K_M) \leq  \xi_K+1$ by definition of $T^K_M$,  we obtain for $t=T^K_M$
$$ \mathbb{P}\bigg(\ \,   (1-\varepsilon)^2 W e^{r_{\star}T^K_M} \leq \xi_K +1, \,  \xi_K \leq  (1+\varepsilon)^2W e^{r_{\star}t} \,   \vert  \, W>0\bigg)\geq 1-3\varepsilon.$$
Finally, taking the $\log$ inside the last probability  yields the result.

{The case where $\partial_R F_R(x^\star_R,0)=0$ can be treated in a similar way, and is therefore omitted.}
\end{proof}

\medskip {Now, in order to prove Theorem~\ref{approxtps} from the results of the previous sections, we need to guarantee that, when two processes are close in some sense, their hitting times are also close. This property is stated and proved in the two next lemmas.}

\begin{lemme}\label{titilemme} Let $f,g : [0,t_0]\rightarrow \R$. 
 Assume that $g$ is differentiable and  
$$\alpha = \inf_{[0,t_0]} g' >0, \quad \beta=\sup_{t\in [0,t_0]} \vert f(t)-g(t)\vert.$$
{If $\beta<g(t_0)-g(0)$, then} for any $x\in [g(0),g(t_0)-\beta]$,  $\{t \in [0,t_0] :  f(t)\geq x\}$ is non empty and
$$\big\vert \inf\{t \in [0,t_0] :  f(t)\geq x\} - \inf\{ t\in [0,t_0] : g(t)\geq x\}\big\vert\leq \frac{\beta}{\alpha}.$$
\end{lemme}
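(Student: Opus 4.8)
The statement is a simple deterministic comparison lemma: if $g$ is strictly increasing with slope at least $\alpha$ and $f$ stays within $\beta$ of $g$, then the hitting times of a level $x$ differ by at most $\beta/\alpha$. My plan is to first record that $g$ is a homeomorphism from $[0,t_0]$ onto $[g(0),g(t_0)]$, so $t_g(x):=\inf\{t\in[0,t_0]:g(t)\geq x\}$ is well-defined for $x\in[g(0),g(t_0)]$ and in fact $g(t_g(x))=x$ there. Then I would handle the two inequalities $f\geq g-\beta$ and $f\leq g+\beta$ separately.

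\textbf{Existence and the lower bound on the $f$-hitting time.} Fix $x\in[g(0),g(t_0)-\beta]$. For every $t$ with $f(t)\geq x$ we have $g(t)\geq f(t)-\beta\geq x-\beta$, hence $t\geq t_g(x-\beta)$ (using monotonicity of $g$; here $x-\beta\geq g(0)-\beta$ may be below $g(0)$, in which case $t_g(x-\beta)=0$, which is still a valid lower bound). This gives $t_f(x):=\inf\{t:f(t)\geq x\}\geq t_g(x-\beta)$ once we know the set is nonempty. To see nonemptiness, evaluate at $t=t_g(x+\beta)$, which lies in $[0,t_0]$ precisely because $x+\beta\leq g(t_0)$ (this is where the hypothesis $x\leq g(t_0)-\beta$, equivalently $\beta<g(t_0)-g(0)$ together with $x\geq g(0)$, is used): there $f(t_g(x+\beta))\geq g(t_g(x+\beta))-\beta = (x+\beta)-\beta = x$. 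So the set is nonempty and moreover $t_f(x)\leq t_g(x+\beta)$.

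\textbf{Converting level gaps into time gaps via the slope.} We now have $t_g(x-\beta)\leq t_f(x)\leq t_g(x+\beta)$, and also trivially $t_g(x-\beta)\leq t_g(x)\leq t_g(x+\beta)$. So $|t_f(x)-t_g(x)|\leq t_g(x+\beta)-t_g(x-\beta)$. It remains to bound $t_g(y')-t_g(y)$ for $g(0)\le y\le y'\le g(t_0)$ (extending by $t_g(y)=0$ for $y<g(0)$) by $(y'-y)/\alpha$: writing $s=t_g(y)$, $s'=t_g(y')$, we have $g(s')=y'$, $g(s)=y$ (or $s=0$, $g(s)\ge y$), and the mean value theorem gives $y'-y = g(s')-g(s)\ge \alpha(s'-s)$, whence $s'-s\le (y'-y)/\alpha$. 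Applying this with $y=x-\beta$ (or $y=\max(x-\beta,g(0))$, which only shrinks the gap) and $y'=x+\beta$ yields $t_g(x+\beta)-t_g(x-\beta)\le 2\beta/\alpha$.

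\textbf{Remark on the main subtlety / sharpening to $\beta/\alpha$.} The crude argument above gives the bound $2\beta/\alpha$; to reach the stated $\beta/\alpha$ one should compare $t_f(x)$ with $t_g(x)$ more carefully rather than bracketing by $t_g(x\pm\beta)$. The clean way: at $t=t_f(x)$ one has (if $t_f(x)>0$) $f(t_f(x))=x$ by continuity-from-the-left considerations—actually $f$ need not be continuous, so instead argue with inequalities only. For the upper bound on $t_f(x)-t_g(x)$: if $t_f(x)>t_g(x)$ then for all $t\in[t_g(x),t_f(x))$ we have $f(t)<x$, hence $g(t)\le f(t)+\beta<x+\beta$; combined with $g(t_g(x))=x$ and $g'\ge\alpha$ this forces $t_f(x)-t_g(x)\le\beta/\alpha$. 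For the reverse, if $t_g(x)>t_f(x)$, pick $t\in[t_f(x),t_g(x))$; there is a sequence $t_n\downarrow t_f(x)$ (or $t_n=t_f(x)$) with $f(t_n)\ge x$, giving $g(t_n)\ge x-\beta$, so $t_n\ge t_g(x-\beta)$, and $g(t_g(x))-g(t_g(x-\beta))\ge\alpha(t_g(x)-t_g(x-\beta))$ while the left side is $\le\beta$; thus $t_g(x)-t_f(x)\le t_g(x)-t_g(x-\beta)\le\beta/\alpha$. The only genuine obstacle is bookkeeping the possible lack of continuity of $f$ and the boundary case $x-\beta<g(0)$; neither is serious, and the hypothesis $\beta<g(t_0)-g(0)$ is used exactly once, to ensure $x+\beta\le g(t_0)$ so that the witness point $t_g(x+\beta)$ lies in the interval.
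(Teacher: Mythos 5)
Your sharpened argument is correct and is essentially the same as the paper's: both use the lower bound $g'\geq\alpha$ to convert the level gap $\beta$ into a time gap $\beta/\alpha$ on each side, and both establish nonemptiness from $x\leq g(t_0)-\beta$. The only real difference is presentational: the paper works directly with the unique $t(x)$ satisfying $g(t(x))=x$ and a free parameter $\varepsilon>\beta/\alpha$, showing $f<x$ on $[0,t(x)-\varepsilon]$ and $f\geq x$ on $[t(x)+\varepsilon,t_0]$ in one pass, whereas you first present a crude $2\beta/\alpha$ bracketing via $t_g(x\pm\beta)$ and then redo the estimate more carefully; you could safely drop the bracketing preamble and go straight to the two-sided argument, which is what actually proves the claim.
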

\begin{proof}
Let us note in a first time that $\{t \in [0,t_0] :  f(t)\geq x\}$ is non empty since
$$f(t_0) = g(t_0) + (f(t_0) - g(t_0))\geq g(t_0) - \beta \geq x.$$

Let $t(x) \in [0,t_0]$ be the (unique) time when $g(t(x))=x$. Then for
$t\in [0 ,t(x)-\varepsilon]$, 
$$f(t)\leq g(t)+\beta \leq g(t(x))-\varepsilon \alpha +\beta =x -\varepsilon \alpha +\beta < x$$
where the last inequality holds for any $\varepsilon> \beta/\alpha$. 
This forces
 \ $\inf\{t \in [0,t_0] :  f(t)\geq x\}\geq t(x)-\varepsilon$.
Similarly for $t \in [ t(x)+\varepsilon, t_0]$ and $\varepsilon \geq \beta/\alpha$,
$$f(t)\geq g(t)-\beta \geq g(t(x)) + \varepsilon \alpha  -\beta \geq x.$$
This yields 
 $ \inf\{t \in [0,t_0] :  f(t)\geq x\} \leq t(x)+\varepsilon$ and completes the proof since $\varepsilon$ can be chosen arbitrarily close to
 $\beta/\alpha$.
\end{proof}

\medskip We can now capture the behavior of   hitting times for $X^K_M(t)$ when  $X^K_M(0)$ is  large compared  to $1/K$
using the approximation by the dynamical system.
For $x\in (\n/K)^2$ and $v\in \R_+$, we  consider the hitting times	
	\begin{equation}\label{SMK}
        \tau_M(x,v) = \inf\{t>0:x_M(t)\geq v \}, \quad \,  S_M^K(x,v) =
	\inf\{t>0:X^K_M(t)\geq v\},
 \end{equation}
	where  $x(0)=X^K(0)=x$.
\begin{lemme}\label{tpsdyn1} Under Assumptions~\ref{assumpt} and~\ref{hypdom},
assume 
$$ 1/K \ll X^K_M(0) \ll 1 \, \quad \text{ and } \, \quad |X^K_R(0)-x_R^{\star}|\log(1/X^K_M(0)) \ll 1$$ in probability as $K\rightarrow \infty$. 
For any $1<c_1$, $v_0<v_\star$ and $\eps>0$ small enough,
$$\lim_{K\rightarrow \infty} \P\left(\underset{c_1 X^K_M(0)\leq v\leq v_0}{\sup}\big|S^K_M(X^K(0),v)-\tau_M(X^K(0),v)\big|\geq \varepsilon\right)=0.$$
\end{lemme}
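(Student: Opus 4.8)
The plan is to transfer the multiplicative control of $X^K_M$ by the deterministic flow $\phi_M$ given by Theorem~\ref{maindynamic} into a control of hitting times via Lemma~\ref{titilemme}, after passing to the logarithmic scale. A direct application of Lemma~\ref{titilemme} to $f=X^K_M$ and $g(t)=x^K_M(t):=\phi_M(X^K(0),t)$ is hopeless: near $t=0$ the flow starts at the tiny value $X^K_M(0)\ll1$ and $g'(0)=x^K_M(0)F_M(x^K(0))$ is of the same order, so $\inf_{t\le t_0}g'$ is not bounded below and the error $\beta/\alpha$ of Lemma~\ref{titilemme} explodes. The remedy, which is the main point, is that in the logarithmic scale the drift becomes $F_M(x^K(t))$, and this stays bounded below by a positive constant along the whole trajectory up to the hitting time of any level strictly below $v_\star$, by Lemma~\ref{lemmeetav}.

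Concretely, I would fix $v_0<v_\star$, set $v_0':=(v_0+v_\star)/2\in(v_0,v_\star)$, and apply Lemma~\ref{lemmeetav} with $v=v_0'$ to obtain $\kappa\in(0,r_\star)$ and $r>0$ with $F_M(\phi(x,t))\ge\kappa$ for all $x\in B_r(x^\star)\cap(\R_+^*)^2$ and $t\le\tau_M(x,v_0')$. Then I would reduce to deterministic initial data: for $\delta>0$ set
$$\mathcal E_K(M,\delta)=\Big\{M\le KX^K_M(0)\,,\ X^K_M(0)\le\delta\,,\ |X^K_R(0)-x^\star_R|\log(1/X^K_M(0))\le\delta\Big\}.$$
The hypotheses give $\pro{\mathcal E_K(M,\delta)}\to1$ for every fixed $M,\delta$, and on this event $X^K(0)\in B_r(x^\star)\cap(\R_+^*)^2$ as soon as $\delta<r$; I may thus fix $\delta$ small (depending only on $v_0'$ and the model, so that the regime of Theorem~\ref{maindynamic} holds) and choose $M_K\to\infty$ slowly with $\pro{\mathcal E_K(M_K,\delta)}\to1$. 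Conditioning on $\{X^K(0)=x\}$, which is licit since the driving Poisson measures are independent of $N^K(0)$, and invoking Theorem~\ref{maindynamic} with $v=v_0'$ and a fixed small $\eta\in(0,1/2)$ — whose constant $C=C(v_0',\eta)$ is, as its proof shows, uniform over the initial data $x$ of that regime — yields, for every such $x$,
$$\pro{\sup_{t\le\tau_M(X^K(0),v_0')}\Big|\tfrac{X^K_M(t)}{x^K_M(t)}-1\Big|>\eta\ \Big|\ X^K(0)=x}\le\frac{C}{\sqrt{Kx_M}}\le\frac{C}{\sqrt{M_K}}.$$
Denoting by $G_K$ the event that this supremum is $\le\eta$, it follows that $\pro{G_K^c}\le\pro{\mathcal E_K(M_K,\delta)^c}+C/\sqrt{M_K}\to0$.

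It remains to conclude on $G_K\cap\mathcal E_K(M_K,\delta)$ using Lemma~\ref{titilemme} with $t_0=\tau_M(X^K(0),v_0')$, $g(t)=\log x^K_M(t)$ and $f(t)=\log X^K_M(t)$ (well defined on $G_K$, where $X^K_M(t)\ge(1-\eta)x^K_M(t)>0$ for $t\le t_0$): $g$ is differentiable with $g'(t)=F_M(x^K(t))\ge\kappa=:\alpha$ for $t\le t_0$, on $G_K$ one has $|f-g|\le-\log(1-\eta)\le2\eta=:\beta$, and $g(t_0)-g(0)=\log(v_0'/X^K_M(0))\ge\log(v_0'/\delta)>\beta$. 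Lemma~\ref{titilemme} then gives that every level in $[\log X^K_M(0),\log v_0'-2\eta]$ is attained by $f$ on $[0,t_0]$ and that the hitting times of $f$ and $g$ at that level differ by at most $\beta/\alpha=2\eta/\kappa$; rewriting via $v=e^{x}$ and shrinking $\eta$ so that also $2\eta\le\log(v_0'/v_0)$, the interval $[c_1X^K_M(0),v_0]$ lies in $[X^K_M(0),v_0'e^{-2\eta}]$, whence $S^K_M(X^K(0),v)<\infty$, $\tau_M(X^K(0),v)\le t_0$ and $|S^K_M(X^K(0),v)-\tau_M(X^K(0),v)|\le2\eta/\kappa$ for all $v$ in that range, on $G_K\cap\mathcal E_K(M_K,\delta)$. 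Given $\eps>0$, one picks $\eta$ small enough that $2\eta/\kappa\le\eps$ (and $\eta<\eta_0$, $\eta<1/2$, $2\eta\le\log(v_0'/v_0)$), then $\delta$ and $M_K$ accordingly, to get
$$\pro{\sup_{c_1X^K_M(0)\le v\le v_0}\big|S^K_M(X^K(0),v)-\tau_M(X^K(0),v)\big|\ge\eps}\le\pro{G_K^c}+\pro{\mathcal E_K(M_K,\delta)^c}\underset{K\rightarrow\infty}{\longrightarrow}0.$$
The hardest part is the initial observation that Lemma~\ref{titilemme} must be applied in the logarithmic scale, together with the need for a uniform-in-$K$ lower bound on the logarithmic drift $F_M(x^K(\cdot))$ up to $\tau_M(\cdot,v_0')$ (provided by Lemma~\ref{lemmeetav}); a secondary technical point is the uniformity of the constant in Theorem~\ref{maindynamic} over the random, $K$-dependent initial conditions, dealt with via the events $\mathcal E_K(M_K,\delta)$ and conditioning.
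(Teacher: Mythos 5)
Your proof follows the same route as the paper's: pass to the logarithmic scale so that $g'=F_M(x^K(\cdot))$ is bounded below by Lemma~\ref{lemmeetav}, control $|f-g|$ via Theorem~\ref{maindynamic}, and invoke Lemma~\ref{titilemme} to transfer the bound on $\sup|f-g|$ into a bound on hitting-time differences. You do, however, introduce two refinements that the paper's write-up glosses over. First, by applying Theorem~\ref{maindynamic} up to $\tau_M(\cdot,v_0')$ with $v_0'=(v_0+v_\star)/2>v_0$, you create the slack $\log(v_0'/v_0)\ge 2\eta$ needed for Lemma~\ref{titilemme}'s constraint $x\le g(t_0)-\beta$ to hold uniformly down to $v=v_0$; the paper's version, which truncates at $\tau_M(\cdot,v_0)$, does not verify this at the upper endpoint, so levels $v\in(v_0e^{-\beta},v_0]$ fall outside the range of Lemma~\ref{titilemme} as stated. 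Second, by conditioning on the events $\mathcal E_K(M_K,\delta)$ and observing that the constant in Theorem~\ref{maindynamic} is uniform over the admissible initial data, you make explicit the step of passing from the deterministic-initial-condition statement of Theorem~\ref{maindynamic} to the random initial conditions $X^K(0)$; the paper applies the theorem to $X^K(0)$ directly without comment. Both points are genuine tightenings rather than departures from the paper's argument.
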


\begin{proof}
There exists $C>0$ such that for any $\varepsilon\in (0,1/2]$ and $u\in [1-\varepsilon, 1+\varepsilon]$,  $\vert \log(u) \vert\leq C \vert u-1\vert \leq C \varepsilon$. Using 
Theorem \ref{maindynamic}, we have
$$\lim_{K\rightarrow \infty} \P\left( \sup_{t\leq \tau_M(X^K(0),{v_0})} \bigg\vert \log\left(\frac{X^K_M(t)}{x_M^K(t)}\right)\bigg\vert  \geq C\varepsilon\right)=0.$$
We apply now Lemma \ref{titilemme} {on the event
$$\left\{\sup_{t\leq \tau_M(X^K(0),{v_0})} \bigg\vert \log\left(\frac{X^K_M(t)}{x_M^K(t)}\right)\bigg\vert  < C\varepsilon\right\}$$
}
with 
$$f(t)=\log(X^K_M(t)), \,  \quad g(t)=\log(x_M^K(t)),$$
{which is possible by choosing~$\eps$ small enough such that, for all $c_1 X^K_M(0)\leq v\leq v_0$ (with the notation of Lemma~\ref{titilemme})
$$\beta < C\eps \leq \log c_1 \leq \log(v/X^K_M(0)) = g(t_0) - g(0).$$}
By Lemma~\ref{lemmeetav}, there exist some $\eta>0$ and $K_0\in\n^*$ such that, for all $K\geq K_0$,
$$\pro{\forall t\leq\tau_M(X^K(0),{v_0}), F_M(x^K(t))\geq\eta}\geq 1-\eps.$$
Then, noticing that $g'(t)=F_M(x^K(t))$ and that $ \vert f(t) - g(t) \vert =\vert \log(X^K_M(t)/x_M^K(t))\vert $ allows to conclude.
\end{proof}
Finally, we end this section with the proof of the main result.
\begin{proof}[Proof of Theorem~\ref{approxtps}]
To begin with, if $\zeta_K\ll K/(\log K)^2$, then $v=0$ and the result has been proved in Proposition~\ref{tpsbranch2} above.

Otherwise, let $(\xi_K)_K$ be any sequence such that
$$\xi_K< \zeta_K\textrm{ and }\xi_K\ll K/(\log K)^2,$$
and let $v_K := \zeta_K/K$. Writing $\tilde X^K(t) := X^K(t+T^K_M(N^K(0),\xi_K))$, and using the notation of~\eqref{SMK},
$$T^K_M(N^K(0),\zeta_K) = T^K_M(N^K(0),\xi_K) + S^K_M(\tilde X^K(0), v_K).$$
Finally, since $\tilde X^K(0)$ satisfies the assumption of Lemma~\ref{tpsdyn1} (cf Lemma~\ref{controlRbranch}), the result is a direct consequence of Propositions~\ref{tKv},~\ref{tpsbranch2} and Lemma~\ref{tpsdyn1}.

{Note that, since $K/(\log K)^2\ll K/(\log K)$, we do not need to treat separately the cases where $\partial_R F_R(x^\star,0)$ is null or negative in the proof of Theorem~\ref{approxtps}.}
\end{proof}
In this part, we have obtained an approximation of hitting times which allows to quantify the stochastic effects. Developing more deeply our approach, we could study the speed of convergence in Theorems \ref{maindynamic} and \ref{approxtps}. This is left for a future work.

\section{Examples : competitive and epidemiological models}

\subsection{Lotka-Volterra competitive model}
\label{secECO}
We consider the dynamics of a two-dimensional population composed of resident and mutant individuals, which have their own demographic parameters and interact in a competitive way, for example in  sharing resources or niche areas. This model is included in  the framework of the paper, with constant individual birth  rates
$$b_\bullet(X^K(s-)) = b_\bullet $$ and individual death rates of the Lotka-Volterra form :
$$d_R(X^K(s))= d_R + c_{1,1}X^K_R(s) +c_{1,2} X^K_M(s),$$
$$d_M(X^K(s))= d_M + c_{2,1}X^K_R(s) + c_{2,2} X^K_M(s),$$
with  individual birth and death rates  $b_R, b_M\ge 0$ and $d_R, d_M\ge 0$ and a competition matrix $(c_{i,j})_{1\leq i,j\le 2}$  with non-negative coefficients. We assume that the  coefficients $b_R-d_R$, $b_M-d_M$, $c_{1,1}$ and $c_{2,2}$ are positive.

The corresponding dynamical system is the competitive Lotka-Volterra system, that is the ODE \eqref{def:dynsys}
with
$$F_{R}(x_{R},x_{M})=b_R-d_R-c_{1,1}x_R-c_{1,2} x_M, \qquad
F_{M}(x_{R},x_{M})=b_M-d_M-c_{2,1}x_R-c_{2,2} x_M.$$
Then
$$x^\star_R = \frac{b_R-d_R}{c_{1,1}}.$$
Let us check that the previous results can be applied.
First, we observe that the regularity in Assumption \ref{assumpt}
is satisfied.
Second,   $(x_R^\star,0)$
is an equilibrium and 
$$\partial_R F_R(x_R^\star,0)= - c_{1,1}<0.$$
This means that Assumption~\ref{assumpt}$.(E)$ is satisfied for $(x_R^\star,0)$. Moreover,   $x^\star_R$ is a stable equilibrium for the resident population alone, which corresponds to the first case considered in the branching process approximation.
\\
Finally, we start from a population with only one mutant, $N_R^K(0)=K-1, N_M^K(0)=1$ and we consider $x_R^*=1$. So  the invasion condition 
$F_M(x_R^*,0)>0$
reads $b_M-d_M-(b_R-d_R) c_{2,1}/c_{1,1}>0$. It is then easy to compute $v^\star$, which is
$$v^\star=x^\star_M=\frac{b_M-d_M}{c_{2,2}}
\,  \text{ if  } F_R(0,x^\star_M)<0, \quad  \text{ or } v^\star= \frac{(b_M-d_M)c_{1,1}-(b_R-d_R)c_{2,1}}{c_{1,1}c_{2,2}-c_{1,2}c_{2,1}} \, \text{ if  } \, F_R(0,x^\star_M)>0.$$

{It can also be noticed that Assumption~\ref{hypdom} is satisfied.}

In this framework, it has been proved in many papers, in particular in  \cite{durrett-mayberry}, \cite{Bovier2019}, \cite{champagnat} that new phenomena can emerge from small subpopulations whose size is of order $K^\alpha$, $\alpha < 1$. 

{Our results allow to characterize the size of populations on such scales. In particular, these results emphasize the fact that, on the scales $K^{\alpha}$ ($\alpha\in (0,1)$), two approximations are valid in the following sense (cf respectively to Theorems~\ref{mainbranch} and~\ref{maindynamic}):
\begin{prop}
For $c>0$,
\begin{itemize}
    \item[i)] starting from $N^K(0)=(K-1,1)$, for all $\eta>0$ and $\alpha\in(0,1)$,
    $$\pro{\underset{t\leq T^K_M(N^K(0),\lfloor cK^\alpha\rfloor)}{\sup}\left|\frac{N^K_M(t)}{Z(t)}-1\right| >\eta~;~\underset{t\geq 0}{\inf}~Z(t)\geq 1}\underset{K\rightarrow\infty}{\longrightarrow}0,$$
    where $Z$ is a birth and death process with individual birth rate $b_M$ and individual death rate $d_M+ c_{2,1}  x^\star_R$.
    \item[ii)] For $\alpha\in(0,1]$ and $\beta\in [0,1)$, assume that, almost surely, $N^K(0)$ satisfies
    $$1\ll N^K_M(0)\leq c K^\alpha\textrm{ and }|N^K_R(0) - K|\leq K^\beta.$$
    Then
    $$\pro{\underset{t\leq T^K_M(N^K(0),\lfloor cK^\alpha\rfloor)}{\sup}\left|\frac{N^K_M(t)}{Kx^K_M(t)}-1\right| >\eta}~\underset{K\rightarrow\infty}{\longrightarrow}0,$$
    where $(x^K_R(t),x^K_M(t))_t$ is the solution of
    $$\left\{\begin{array}{ll}
    \frac{d}{dt} x^K_R(t) &= x^K_R(t)(b_R - d_R - c_{1,1}x^K_R(t) - c_{1,2}x^K_M(t)),\\
    \frac{d}{dt} x^K_M(t) &=x^K_M(t) (b_M - d_M - c_{2,1} x^K_R(t) - c_{2,2} x^K_M(t)),
    \end{array}\right.$$
    with $(x^K_R(0),x^K_M(0)) = (N^K_R(0)/K,N^K_M(0)/K).$
\end{itemize}
\end{prop}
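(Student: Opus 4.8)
The plan is to recognise that the competitive Lotka--Volterra model is a particular instance of the framework of Sections~\ref{sectionmodel}--\ref{sectiondyn}, so that $(i)$ is a direct consequence of Corollary~\ref{mainbranch1} and $(ii)$ of Theorem~\ref{maindynamic}, once the standing assumptions have been checked. First I would verify Assumptions~\ref{assumpt},~\ref{assumptCI} and~\ref{hypdom}: regularity~$(R)$ is immediate since all rates are polynomial; the point $x^\star_R=(b_R-d_R)/c_{1,1}>0$ (normalised to $1$ in this section) satisfies $F_R(x^\star_R,0)=0$ and $\partial_R F_R(x^\star_R,0)=-c_{1,1}<0$, so $(E)$ holds and we are in the hyperbolic case $\partial_R F_R(x^\star)<0$, while $(I)$ is the standing assumption $F_M(x^\star_R,0)>0$; Assumption~\ref{assumptCI} holds because $N^K_M(0)=1$ and $|(K-1)/K-1|=1/K$. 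For Assumption~\ref{hypdom} I would use that a competitive Lotka--Volterra vector field with positive diagonal competition points inward on the boundary of a sufficiently large box, so that every trajectory issued from a compact neighbourhood of $x^\star$ remains bounded. Finally, at $x^\star=(x^\star_R,0)$ one has $b_M(x^\star)=b_M$ and $d_M(x^\star)=d_M+c_{2,1}x^\star_R$, so the branching process $Z$ of Section~\ref{sectionbranch} is exactly the birth-and-death process appearing in the statement, and $\{W>0\}=\{\forall t>0:\,Z(t)>0\}=\{\inf_{t\ge0}Z(t)\ge1\}$.

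For $(i)$ I would then simply apply Corollary~\ref{mainbranch1} with $\xi_K:=\lfloor cK^\alpha\rfloor$: since $\alpha\in(0,1)$ we have $1\ll\xi_K\ll K/\log K$, and the conclusion of that corollary in the case $\partial_R F_R(x^\star)<0$ is precisely assertion~$(i)$.

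For $(ii)$ I would set $x_K:=N^K(0)/K$, so that $(x^K_R(t),x^K_M(t))=\phi(x_K,t)$ in the notation of Section~\ref{sectiondyn} and $N^K_M(t)/(Kx^K_M(t))=X^K_M(x_K,t)/\phi_M(x_K,t)$. For $\alpha\in(0,1)$ the hypotheses on $N^K(0)$ give $Kx^K_M(0)=N^K_M(0)\to\infty$, $x^K_M(0)\le cK^{\alpha-1}\to0$, and $|x^K_R(0)-x^\star_R|\log(1/x^K_M(0))\le K^{\beta-1}\log K\to0$, i.e.\ exactly the initial conditions required by Theorem~\ref{maindynamic}; for random $N^K(0)$ one conditions on it, using that the constant in Theorem~\ref{maindynamic} depends only on the model parameters, on $v$, $\eta$ and on $Kx^K_M(0)$. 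Fixing any $v_0\in(0,v_\star)$ and $\eta\in(0,1)$ small, Theorem~\ref{maindynamic} yields $\pro{\sup_{t\le\tau_M(x_K,v_0)}|X^K_M(x_K,t)/\phi_M(x_K,t)-1|>\eta}\le C/\sqrt{N^K_M(0)}\to0$. On the complementary event $X^K_M(x_K,\tau_M(x_K,v_0))\ge(1-\eta)v_0>\lfloor cK^\alpha\rfloor/K$ for $K$ large, so there $T^K_M(N^K(0),\lfloor cK^\alpha\rfloor)\le\tau_M(x_K,v_0)$, whence the supremum of $|X^K_M(x_K,t)/\phi_M(x_K,t)-1|$ over $[0,T^K_M(N^K(0),\lfloor cK^\alpha\rfloor)]$ is again at most $\eta$; this proves $(ii)$ for $\alpha\in(0,1)$. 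For the endpoint $\alpha=1$ (where one needs $c<v_\star$, the hitting time being eventually infinite otherwise, so that there is nothing to prove) the same reduction applies with $v_0\in(c,v_\star)$, invoking instead the classical fluid-limit estimate of \cite{ethierkurtz} on the finite window $[0,\tau_M(x_K,v_0)]$ in the sub-case where the initial mutant density does not tend to $0$.

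I expect the main obstacle to lie in the two auxiliary points rather than in the reductions themselves: verifying Assumption~\ref{hypdom} for the competitive Lotka--Volterra flow (an invariant-box argument) and making precise the passage from the deterministic initial data of Theorem~\ref{maindynamic} to the random initial conditions allowed here, that is, the uniformity of the constant over all initial densities with the prescribed decay. Everything else is a transcription of Corollary~\ref{mainbranch1} and Theorem~\ref{maindynamic}.
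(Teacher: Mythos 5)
Your reduction is exactly the route the paper intends: the proposition is stated in Section~5.1 as an immediate consequence of Theorem~\ref{mainbranch} (for~$(i)$, via Corollary~\ref{mainbranch1}) and Theorem~\ref{maindynamic} (for~$(ii)$), with the paper merely noting that the assumptions hold for the Lotka--Volterra rates. Your verification of Assumptions~\ref{assumpt},~\ref{assumptCI},~\ref{hypdom}, the identification of $b_\star,d_\star$, of $\{W>0\}$ with $\{\inf_t Z(t)\ge1\}$, and the passage from the supremum bound in Theorem~\ref{maindynamic} to a bound over $[0,T^K_M(N^K(0),\lfloor cK^\alpha\rfloor)]$ are all correct and in the spirit of the paper, so the two arguments coincide.
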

}

{In addition,} it can be essential to know at which time a population initiated by a single mutant individual could attain such size. Our results answers to the question for all $\alpha< 1$. More precisely,  Theorem \ref{approxtps} for $\zeta_K=cK^{\alpha}$
yields
\begin{corol} 
For any $\alpha \in (0,1)$, $c>0$ and $\eps>0$,
    $$\lim_{K\rightarrow \infty}\P\left( \bigg\vert  T^K_{M}(N^K(0), \lfloor cK^{\alpha}\rfloor) - \frac{\alpha\log(K)+\log(c/W)}{r_{\star}} \bigg\vert \geq \varepsilon\,;\, W>0 \right)=0,$$
\end{corol}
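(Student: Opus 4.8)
The plan is to read off this corollary as a direct specialization of Theorem~\ref{approxtps} to the Lotka--Volterra model, taking the threshold sequence $\zeta_K = \lfloor cK^\alpha\rfloor$. There are really only three things to check: that the model meets the standing assumptions, that the limiting macroscopic level attached to this sequence is $v = 0$, and that replacing $\log\zeta_K$ by $\alpha\log K + \log c$ produces only an asymptotically negligible error.

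First I would recall, as already observed in Section~\ref{secECO}, that the Lotka--Volterra model satisfies Assumptions~\ref{assumpt} and~\ref{hypdom}, that Assumption~\ref{assumptCI} holds for the initial condition $N^K(0) = (K-1,1)$ together with $x^\star_R = 1$, and that $\partial_R F_R(x^\star) = -c_{1,1} < 0$, so that we are in the exponentially stable (hyperbolic) case. Next, since $\alpha \in (0,1)$, the sequence $\zeta_K = \lfloor cK^\alpha\rfloor$ tends to infinity while $\zeta_K/K \sim cK^{\alpha-1} \to 0$; hence $\zeta_K/K \to v := 0$, and $v = 0$ lies in $[0,v_\star)$ because $v_\star > 0$ by Proposition~\ref{tKv}. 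This verifies all the hypotheses of Theorem~\ref{approxtps}. (Alternatively, since $\zeta_K \ll K/(\log K)^2$, one may invoke Proposition~\ref{tpsbranch2} directly, exactly as in the first line of the proof of Theorem~\ref{approxtps}.)

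Applying Theorem~\ref{approxtps} and using $\tau(0) = 0$ then gives, for every $\varepsilon > 0$,
$$\lim_{K\to\infty}\P\left(\left|T^K_M(N^K(0),\zeta_K) - \frac{\log(\zeta_K/W)}{r_\star}\right| \geq \varepsilon \,;\, W > 0\right) = 0.$$
To conclude I would compare $\log\zeta_K$ with $\alpha\log K + \log c$: from $cK^\alpha - 1 \leq \lfloor cK^\alpha\rfloor \leq cK^\alpha$ one obtains $\log\zeta_K = \alpha\log K + \log c + O(K^{-\alpha})$ for $K$ large, hence
$$\frac{\log(\zeta_K/W)}{r_\star} = \frac{\alpha\log K + \log(c/W)}{r_\star} + O(K^{-\alpha}) \qquad \text{on } \{W > 0\},$$
and a triangle inequality turns the preceding display into the announced convergence. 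I do not expect any genuine obstacle here: the only mild care needed is the bookkeeping of the floor function and the check that the example fits the general framework, both of which are immediate from material already established in the paper; the statement is essentially a bookkeeping corollary of Theorem~\ref{approxtps}.
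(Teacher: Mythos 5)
Your proof is correct and mirrors the paper's approach exactly: the paper also presents the corollary as an immediate specialization of Theorem~\ref{approxtps} with $\zeta_K = cK^\alpha$, $v=0$ and $\tau(0)=0$, merely remarking "Here $v=0$ and $\tau(0)=0$" after citing the theorem. You have simply filled in the routine bookkeeping (verification of the assumptions for the Lotka--Volterra model, the $O(K^{-\alpha})$ error from the floor function) that the paper leaves implicit.
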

Here $v=0$ and $\tau(0)=0$.

\subsection{SIR model}
\label{secSIR}
Let $\beta>0$ be the infection rate (per pair of individuals) in a mixed
population and 
$\gamma>0$ the individual recovery rate.
Let $K\geq 1$ be the total population size.
At time $t$,  the number of susceptible individuals is denoted by $S^K(t)$,
the number of  infected individuals is denoted by  $I^K(t)$ and  $K-S^K(t)-I^K(t)$ yields the number 
of recovered individuals.
Each susceptible becomes infected at time 
$t$ with rate $\beta I^K(t)/K$.
Starting from one infected individual {and all other individuals susceptible}, the process $N^K=(N_R^K,N_M^K)= (S^K,I^K)$ is the unique strong solution of
\begin{align*}
S^K(t)=&K-1   -\int_{[0,t]\times\r_+}\uno{u\leq \beta S^K(s-)I^K(s-)/K}\N_I(ds,du).\\
I^K(t)=&1 +  \int_{[0,t]\times\r_+}\uno{u\leq \beta S^K(s-)I^K(s-)/K}\N_I(ds,du)-\int_{[0,t]\times\r_+}\uno{u\leq \gamma I^K(s-)}\N_G(ds,du),
\end{align*}
where $\N_I$ and $\N_G$ are Poisson point measures on $\R_+^2$ with intensity $dsdu$.\\

The process 
$N^K$
is a bitype birth and death process as considered in the introduction with $\N_I=\mathcal N_M^b=\mathcal N_R^d$ and  birth and death rates   defined by
\begin{align*}
b_R(x_R;x_M)&=0, \qquad \quad \, \, \,  d_R(x_R;x_M)=\beta  x_M\\
b_M(x_R;x_M)&=\beta  x_R, \qquad d_M(x_R;x_M)=\gamma.
\end{align*}
The ODE describing the limit of $N^K/K$  is given by \eqref{def:dynsys}
with
$$F_{R}(x_{R},x_{M})=-\beta x_M, \qquad F_{M}(x_{R},x_{M})=\beta x_R-\gamma.$$

Let us check that our results can be applied to the case where the first derivative of $F_R$ with respect to $x_R$ cancels.
First, we observe that regularity of Assumption \ref{assumpt}
is satisfied.
Second, for any $x_R>0$,   $(x_R,0)$
is an equilibrium and 
$$\partial_R F_R(x_R,0)=0.$$
This means that the second part of 
Assumption \ref{assumpt} is satisfied for any $x_R>0$. Moreover, there is no geometric stability for the resident population alone, which corresponds to the second case considered in the branching process approximation.\\
Finally, we start from a population with only one mutant, $S^K(0)=N_R^K(0)=K-1, I^K(0)=N_M^K(0)=1$ and we consider $x_R^*=1$. So  the invasion condition 
$F_M(x_R^*,0)>0$
reads $\beta>\gamma$.
The branching process $Z$ is a birth and death process  with individual birth rate $\beta$ and individual death rate $\gamma$.

In this example,  the maximal value of  the invasive population is the  peak of the epidemic and  can be computed. Indeed, at the peak time, the derivative of $x_I$ is zero, so  $x_S=\gamma/\beta$ Adding that $\gamma\log(x_S)+\beta (1-x_S-x_I)=0$, we obtain for the peak 
value $v_{\star}=x_I$
the expression: $$v_{\star}=1-\frac{\gamma}{\beta}+\frac{\gamma}{\beta}\log\left(\frac{\gamma}{\beta}\right).$$

We obtain the following results when there is an outbreak of the epidemic.  The hitting times for the infected population are defined by
$$\tau^K_{I}(n)=\inf\{t\geq 0,\,  I^K(t) \geq n \}.$$
\begin{prop} If $\beta>\gamma$, then\\
i)  for any $\xi_K \ll K/(\log K)^2$ and $\eta>0$,
\begin{align*}
\lim_{K\to +\infty} \mathbb{P}\bigg(\underset{t\leq \tau^K_{I}(\xi_K)}{\sup}~ \left|\frac{I^K(t)}{I(t)}-1\rr| > \eta \, ;  \, I(\tau^K_{I}(\xi_K ))>0\bigg)=0,
\end{align*} 
where $I$ is the unique strong solution of
\begin{align*}
I(t)=&1 + \int_{[0,t]\times\r_+}\uno{u\leq \beta\,I(s-) }\, \N_I(ds,du)
-\int_{[0,t]\times\r_+}\uno{u\leq  \gamma\, I(s-)}\, \N_G (ds,du).
\end{align*}
ii) For any $v<v_{\star}$ and $\varepsilon>0$, 
	$$\lim_{T\rightarrow \infty} \,
 \limsup_{K\rightarrow \infty} \, \P\left( \sup_{t \in [T,\tau^K_I(vK)]}\bigg\vert \frac{I^K(t)}{K x_I^{K,T}(t)}-1\bigg\vert
\,  \geq \varepsilon; \, I^K(T)>0\right)= \, 0,$$
where $(x^{K,T}_S(t),x^{K,T}_I(t))$ is the solution of 
\begin{eqnarray*}
\begin{cases}
\frac{d}{dt} x^{K,T}_S(t) &= -\beta x^{K,T}_S(t) x^{K,T}_I(t),\\
\frac{d}{dt} x^{K,T}_I(t) &= (\beta x^{K,T}_S(t)-\gamma) x^{K,T}_I(t) .
\end{cases}
\end{eqnarray*}
such that $(x^{K,T}_S(0),x^{K,T}_I(0))=(S^K(T)/K, I^K(T)/K)$.
\end{prop}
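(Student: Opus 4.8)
The plan is to first check that the SIR model falls within the general framework of the paper and then invoke the results of Sections~\ref{sectionbranch}--\ref{hittingtimes}. All four rates $b_R=0$, $d_R=\beta x_M$, $b_M=\beta x_R$, $d_M=\gamma$ are polynomials, hence $C^2$, so Assumption~\ref{assumpt}$(R)$ holds; with $x^\star_R=1$ one has $F_R(1,0)=0$ and $\partial_R F_R(1,0)=0$ (the partially hyperbolic case), so $(E)$ holds; and $F_M(1,0)=\beta-\gamma=r_\star>0$ whenever $\beta>\gamma$, so $(I)$ holds. Assumption~\ref{hypdom} holds because $x_S+x_I$ is non-increasing along the flow, so trajectories starting near $(1,0)$ stay bounded; and $S^K(0)=K-1$, $I^K(0)=1$ give Assumption~\ref{assumptCI}. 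Part~$(i)$ is then immediate from the $\partial_R F_R(x^\star)=0$ case of Corollary~\ref{mainbranch1}, read with $N^K_M=I^K$, $Z=I$ and $T^K_M(N^K(0),\xi_K)=\tau^K_I(\xi_K)$; I would only add a one-line remark that the conditioning event $\{I(\tau^K_I(\xi_K))>0\}$ agrees with the survival event $\{W>0\}$ up to an event of vanishing probability, since on $\{W=0\}$ the process $I$ is absorbed at $0$ in finite time while $\xi_K\to\infty$.

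For part~$(ii)$ the subtlety is that at the fixed time $T$ the mutant density $I^K(T)/K$ is only of order $1/K$, so Theorem~\ref{maindynamic} cannot be applied straight from time~$T$. My plan is to fix $T$, introduce an auxiliary threshold $\xi_K$ with $\xi_K\to\infty$ and $\xi_K\ll K/(\log K)^2$, and split $[T,\tau^K_I(vK)]$ at $\tau^K_I(\xi_K)$, which tends to $+\infty$ by Lemma~\ref{tpsinfini} so that $T<\tau^K_I(\xi_K)$ eventually on $\{W>0\}$. On the first piece $[T,\tau^K_I(\xi_K)]$ the mutant density stays $o(1)$ and I would use the branching approximation of part~$(i)$; on the second piece $[\tau^K_I(\xi_K),\tau^K_I(vK)]$ the mutant count $\xi_K$ is large while $\xi_K/K\to 0$, so Theorem~\ref{maindynamic}, restarted at $\tau^K_I(\xi_K)$ via the strong Markov property, becomes available. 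On the first piece I would use $Kx^{K,T}_I(T)=I^K(T)$ and integrate the logarithmic derivative of $x^{K,T}_I$ to obtain, for $t\in[T,\tau^K_I(\xi_K)]$,
\[
\log\frac{I^K(t)}{Kx^{K,T}_I(t)}=\log\frac{I^K(t)}{I(t)}-\log\frac{I^K(T)}{I(T)}+\log\frac{W(t)}{W(T)}-\int_T^t\beta\bigl(x^{K,T}_S(s)-1\bigr)\,ds,
\]
with $W(t)=I(t)e^{-r_\star t}$. The first two terms are uniformly small for large $K$ by part~$(i)$; the third is small for large $T$ since $W(t)\to W>0$ a.s. on $\{W>0\}$ (this is exactly where the order $\lim_T\limsup_K$ enters, and the corresponding probability does not depend on $K$); and the last is controlled using $|x^{K,T}_S(T)-1|\le CI^K(T)/K$ together with $x^{K,T}_S(T)-x^{K,T}_S(s)=\int_T^s\beta x^{K,T}_S x^{K,T}_I\,dr\le C x^{K,T}_I(s)\le C\xi_K/K$ (because $x^{K,T}_I$ grows at rate $\ge r_\star/2$ while $x^{K,T}_S$ stays near $1$, a fact one formalizes by a stopping-time/bootstrap argument in the style of Lemma~\ref{controltps}), so that the integral is $O(\xi_K(\log K)/K)\to 0$. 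This yields the relative approximation on $[T,\tau^K_I(\xi_K)]$, and in particular $Kx^{K,T}_I(\tau^K_I(\xi_K))/\xi_K\to 1$ and $x^{K,T}_R(\tau^K_I(\xi_K))\to 1$.

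On the second piece I would condition on $\mathcal F_{\tau^K_I(\xi_K)}$; on the event where $|S^K(\tau^K_I(\xi_K))/K-1|\,\log(K/\xi_K)$ is small — which has probability tending to $1$ on $\{W>0\}$, since the number of jumps before $\tau^K_I(\xi_K)$ is $O(\xi_K)$, hence $|S^K(\tau^K_I(\xi_K))-K|=O(\xi_K)$ — Theorem~\ref{maindynamic} gives that the restarted process $X^K$ tracks, within relative error $\eta$, the flow $\phi(X^K(\tau^K_I(\xi_K)),\cdot)$ up to hitting level $v$. It then remains to compare this flow with $\phi(x^{K,T}(\tau^K_I(\xi_K)),\cdot)=x^{K,T}(\cdot+\tau^K_I(\xi_K))$: by the first piece the two starting points agree up to relative error tending to $0$, and this relative error is not amplified along the flow, since while the mutant density is small the log-linearization at $(1,0)$ only changes the ratio of two solutions at a rate proportional to $\beta|x_S^{(1)}-x_S^{(2)}|$, and once the mutant density is macroscopic the flow stays in a compact set away from the degenerate boundary. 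I expect this last point — a uniform control of relative errors of the flow over times of order $\log K$ near a partially hyperbolic equilibrium — to be the main obstacle; I would extract it from the estimates of Section~\ref{defvstar} (Lemmas~\ref{xKRx*R}, \ref{xKRx*Rdeux} and the linearization/Hartman--Grobman argument in the proof of Proposition~\ref{tKv}). Gluing the two pieces at $\tau^K_I(\xi_K)$ and letting first $K\to\infty$ and then $T\to\infty$ then closes the argument.
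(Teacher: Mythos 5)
For part~$(i)$ you take the same route as the paper: verify Assumptions~\ref{assumpt},~\ref{assumptCI},~\ref{hypdom} in the SIR setting and invoke Corollary~\ref{mainbranch1} in the $\partial_R F_R(x^\star)=0$ case; your extra remark identifying $\{I(\tau^K_I(\xi_K))>0\}$ with $\{W>0\}$ up to a vanishing event is a sensible and needed addition, and your verification of Assumption~\ref{hypdom} via the monotonicity of $x_S+x_I$ fills in a check the paper leaves implicit.

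For part~$(ii)$ the paper's entire proof is the one-line ``point~$ii)$ comes from Theorem~\ref{maindynamic}.'' You are right to flag that this is not a direct application: at a fixed time~$T$ the quantity $I^K(T)$ is $O(1)$, so $x^{K,T}_I(0)=I^K(T)/K$ is of order $1/K$, not $\gg 1/K$, and the bound $C/\sqrt{Kx^K_M}$ from Theorem~\ref{maindynamic} does not vanish as $K\to\infty$. The paper presumably has in mind the shorter route of conditioning on $\{I^K(T)\geq M_T\}$ for some $M_T\to\infty$ slowly, applying Theorem~\ref{maindynamic} conditionally on $\mathcal F_T$ on that event (where the admissibility conditions do hold, since $|S^K(T)/K-1|\log(K/I^K(T))=O((\log K)/K)$), and absorbing $\{0<I^K(T)<M_T\}$ into a term that vanishes in the $\lim_T\limsup_K$ order; this needs only uniformity of the constant in Theorem~\ref{maindynamic} over admissible initial data, which the proof does supply. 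Your two-piece gluing at $\tau^K_I(\xi_K)$ is a genuinely different and more explicit route: the exact identity for $\log\bigl(I^K(t)/(Kx^{K,T}_I(t))\bigr)$ is correct (I checked it), and the strategy of controlling each term via part~$(i)$, the martingale convergence $W(t)\to W$, and the drift integral is sound. However, it carries two acknowledged gaps that are not cosmetic. First, bounding $\int_T^{\tau^K_I(\xi_K)}\beta|x^{K,T}_S(s)-1|\,ds$ by $O(\xi_K(\log K)/K)$ uses $x^{K,T}_I(s)\lesssim\xi_K/K$ up to the \emph{stochastic} hitting time $\tau^K_I(\xi_K)$, which is circular as stated; the bootstrap/stopping-time argument you allude to (in the style of $\theta^K$ in Lemma~\ref{controltps}) is genuinely required and must be carried out. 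Second, the flow comparison between $\phi(X^K(\tau^K_I(\xi_K)),\cdot)$ and the shifted $x^{K,T}$ over times of order $\log K$ near the partially hyperbolic equilibrium is the hard step; the paper never proves a relative Gronwall estimate of that kind (its flow estimates control a single trajectory against $x^\star$, not two nearby trajectories against each other over $\log K$-long windows), so this would have to be extracted from Lemmas~\ref{xKRx*R}--\ref{xKRx*Rdeux} with some additional work. On balance, your route is defensible and more transparent than the paper's, but the paper's conditioning-on-$\{I^K(T)\geq M_T\}$ shortcut is shorter once one accepts the uniformity of the constant in Theorem~\ref{maindynamic}.
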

Point $i)$ is a consequence of Theorem \ref{mainbranch}
and
point ii) comes from Theorem \ref{maindynamic}.
These two parts have an intersection : in the time window when the number of infected individuals is large but negligible compared to $K/(\log K)^2$, we have informally 
$$I^K(t)\sim I(t)\sim W e^{(\beta-\gamma)t}\sim Kx_{I}^{K,T}(t-T).$$
We also obtain an approximation of the time when the epidemics reaches a given level (by Theorem~\ref{approxtps}):
\begin{corol} Let $(\zeta_K)_K$ be sequence which tends to infinity and satisfies $\zeta_K/K\rightarrow v$ as $K\rightarrow \infty$, for some $v \in [0,v_{\star})$.
Then,  for any $\varepsilon>0$, 
$$\lim_{K\rightarrow \infty}\P\left( \bigg\vert  \tau^K_{I}(\zeta_K) - \frac{\log(\zeta_K/W)}{r_{\star}}-  \tau(v) \bigg\vert \geq \varepsilon, \, W>0 \right)=0,$$
 where $\tau$ is the increasing continuous function such that $\tau(0)=0$ defined in Proposition \ref{tKv}.
\end{corol}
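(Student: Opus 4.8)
The plan is to recognize the statement as a direct specialization of Theorem~\ref{approxtps} to the SIR model and to carry out only the verification of the standing hypotheses. First I would record the identifications: with $N^K = (S^K, I^K)$, $\mathcal N^b_M = \mathcal N^d_R = \mathcal N_I$ and $\mathcal N^d_M = \mathcal N_G$, one has $N^K_M = I^K$, so that $\tau^K_I(\zeta_K) = T^K_M(N^K(0),\zeta_K)$; moreover the branching process $Z$ coupled to $N^K_M$ in Section~\ref{sectionbranch} has individual birth rate $b_\star = b_M(x^\star) = \beta$ and individual death rate $d_\star = d_M(x^\star) = \gamma$, hence $r_\star = \beta - \gamma$ and $W = \lim_{t\to\infty} e^{-r_\star t} Z(t)$ has the stated law. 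Since $Z$ solves the same linear stochastic equation driven by $(\mathcal N_I,\mathcal N_G)$ as the process $I$, it coincides with $I$.

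Next I would verify the three assumptions of Theorem~\ref{approxtps}. Assumption~\ref{assumpt}$(R)$ holds because $b_R \equiv 0$, $d_R(x) = \beta x_M$, $b_M(x) = \beta x_R$ and $d_M \equiv \gamma$ are polynomial, hence $C^2$. With $x^\star_R = 1$ one gets $F_R(1,0) = 0$ and $\partial_R F_R(1,0) = 0 \le 0$, so Assumption~\ref{assumpt}$(E)$ holds (in the degenerate case $\partial_R F_R(x^\star) = 0$), while $F_M(1,0) = \beta - \gamma > 0$ is Assumption~\ref{assumpt}$(I)$. Assumption~\ref{assumptCI} is immediate: $N^K_M(0) = 1$ and $|N^K_R(0)/K - x^\star_R| = |(K-1)/K - 1| = 1/K$. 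For Assumption~\ref{hypdom}, take $\mathcal D = \{(x_S,x_I)\in\R_+^2 : x_S + x_I \le 1 + \varepsilon\}$ for $\varepsilon>0$ small; it contains $(x^\star_R-\varepsilon, x^\star_R+\varepsilon)\times[0,\varepsilon)$, and along the flow $\partial_t x_S = -\beta x_S x_I \le 0$ and $\partial_t(x_S+x_I) = -\gamma x_I \le 0$, so every trajectory issued from $\mathcal D$ stays in $[0,1+\varepsilon]^2$, which is bounded.

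I would then note, as a consistency check on the admissible range of $v$, that the quantity $v_\star$ constructed in Section~\ref{defvstar} from the flow equals the epidemic peak: the conserved quantity $\gamma\log x_S + \beta(1 - x_S - x_I)$ of the SIR ODE, evaluated at the peak time (where $\partial_t x_I = 0$, i.e. $x_S = \gamma/\beta$) and in the limit $x \to x^\star$ at time $0$ (where $x_S\to 1$, $x_I\to 0$), yields $v_\star = 1 - \gamma/\beta + (\gamma/\beta)\log(\gamma/\beta)$; in particular the hypothesis $v \in [0,v_\star)$ of the corollary is exactly the one required by Theorem~\ref{approxtps}. With all hypotheses in force, Theorem~\ref{approxtps} applied to the sequence $(\zeta_K)_K$ gives precisely the displayed convergence, with $\tau$ the increasing continuous function vanishing at $0$ furnished by Proposition~\ref{tKv}.

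The only step that is not a pure substitution is the verification of Assumption~\ref{hypdom}, and even there the argument reduces to the elementary fact that both $x_S$ and $x_S + x_I$ are non-increasing along SIR trajectories; so I do not expect a genuine obstacle, the analytic content having already been established in Theorems~\ref{mainbranch}, \ref{maindynamic} and~\ref{approxtps}.
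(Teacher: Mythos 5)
Your proof is correct and takes the same route the paper implicitly uses: the corollary is nothing but Theorem~\ref{approxtps} specialized to the SIR process once the standing hypotheses are checked, and you do this carefully — including Assumption~\ref{hypdom}, which the paper leaves implicit — with the correct identification of $(N^K_R,N^K_M)=(S^K,I^K)$, of $b_\star=\beta$, $d_\star=\gamma$, of $Z$ with the process $I$, and of $v_\star$ with the epidemic peak. The only cosmetic slip is that $\mathcal D=\{x_S+x_I\leq 1+\varepsilon\}$ does not quite contain the box $(1-\varepsilon,1+\varepsilon)\times[0,\varepsilon)$ required by Assumption~\ref{hypdom}; enlarging to $\{x_S+x_I\leq 1+2\varepsilon\}$ fixes this, and since both $x_S$ and $x_S+x_I$ are non-increasing along the flow the boundedness conclusion is unchanged.
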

This question of how long it takes for an epidemic to reach a high level has several motivations. Indeed, sanitary capacities  for managing the infected individuals (hospital beds, isolation...) are limited.
The time at which sanitary capacities reach saturation is an important factor in managing epidemics and taking control measures (social distancing, confinement, etc.). In addition, we control here the period during which the stochasticity of the process still plays a role, before deterministic predictions are valid with sufficient accuracy.


\section{Appendix}\label{secappend}

\subsection{Minimum of the martingale associated with the branching process $Z$}\label{appendbranch}

Recalling the notation of Section~\ref{sectionbranch}, we are interested in the minimal value
of the martingale $W(t) := Z(t)\exp(-r_\star t)$. Such object for simple branching processes has already attracted a lot of attention. We are not aware in the literature of the estimates given here  and provide the proof for completeness. This relies on classical $L^2$ estimates to control the speed of the convergence of the martingale.

\begin{lemme}\label{propbranchement}
	 For any $\eps>0,$ and any $Z-$adapted stopping time $\tau,$ 
	$$\pro{W(\tau)\leq \eps\, ;\, W>0}\leq C\left(\eps +  \eps^{-1}\esp{\uno{\tau<\infty}e^{-r_\star \tau/2}}\right),$$
	where $W$ is the almost sure limit of $W(t)$ as $t$ goes to infinity, and $C$ is some positive constant independent of~$\eps$ and~$\tau.$ We use the convention $W(+\infty) := W$ for the inequality above to make sense.
\end{lemme}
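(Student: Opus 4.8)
The plan is to exploit the $L^2$-convergence of the martingale $W(t)=Z(t)e^{-r_\star t}$ together with a decomposition at the stopping time $\tau$. First I would recall the standard second-moment estimate for the supercritical branching process $Z$: since $Z$ has finite variance, the martingale $W(t)$ is bounded in $L^2$ and, more precisely, there is a constant $C$ such that
\begin{equation*}
\esp{(W(t) - W)^2 \mid \mathcal F_s} \leq C e^{-r_\star s} \qquad \text{for } s \leq t,
\end{equation*}
and similarly $\esp{(W(s)-W)^2} \leq C e^{-r_\star s}$; this follows from the explicit variance computation $\esp{W(t)^2} = 1 + \frac{b_\star + d_\star}{r_\star}(1 - e^{-r_\star t})$ (or can be quoted from \cite{AN}, Chapter~3), so that $\esp{(W(s)-W)^2} = \esp{W^2} - \esp{W(s)^2} \leq C e^{-r_\star s}$. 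The event $\{W(\tau) \leq \eps\}$ on $\{W>0\}$ will be split according to whether $W$ itself is small or whether $W(\tau)$ is far from its limit $W$.

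The key steps, in order, are as follows. I would write
\begin{align*}
\pro{W(\tau) \leq \eps\, ;\, W>0} \leq {}& \pro{W \leq 2\eps\, ;\, W>0} + \pro{W(\tau) \leq \eps\, ;\, W > 2\eps}\\
\leq {}& \pro{0 < W \leq 2\eps} + \pro{|W(\tau) - W| \geq \eps\, ;\, \tau < \infty},
\end{align*}
using the convention $W(+\infty)=W$ so that on $\{\tau = \infty\}$ the second event is empty. For the first term I would use that $W$ restricted to $\{W>0\}$ has the density $\mu_{r_\star/b_\star}$ up to the constant $r_\star/b_\star$, which is bounded near $0$; hence $\pro{0 < W \leq 2\eps} \leq C\eps$. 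For the second term I would apply the optional-sampling/conditional argument: on $\{\tau < \infty\}$, by the tower property and the conditional $L^2$ bound above, together with (conditional) Markov or Chebyshev,
\begin{equation*}
\pro{|W(\tau) - W| \geq \eps\, ;\, \tau < \infty} \leq \eps^{-2}\, \esp{\uno{\tau < \infty}\, \esp{(W(\tau)-W)^2 \mid \mathcal F_\tau}} \leq C\eps^{-2}\, \esp{\uno{\tau<\infty} e^{-r_\star \tau}}.
\end{equation*}
Since $e^{-r_\star\tau} \leq e^{-r_\star\tau/2}$ (as $\tau \geq 0$), this is bounded by $C\eps^{-2}\esp{\uno{\tau<\infty}e^{-r_\star\tau/2}}$; and whenever $\eps^{-2}\esp{\uno{\tau<\infty}e^{-r_\star\tau/2}} \geq \eps^{-1}\esp{\uno{\tau<\infty}e^{-r_\star\tau/2}}$ fails (i.e.\ that quantity exceeds $1$) the bound is trivial, so one can replace the exponent $2$ by $1$ as in the statement. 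Actually, to get exactly the exponent $-1$ claimed, I would instead split the second event at an auxiliary level and use Cauchy--Schwarz: $\pro{|W(\tau)-W| \geq \eps; \tau<\infty} \leq \eps^{-1}\esp{\uno{\tau<\infty}|W(\tau)-W|} \leq \eps^{-1}\esp{\uno{\tau<\infty}\esp{(W(\tau)-W)^2\mid\mathcal F_\tau}^{1/2}} \leq C\eps^{-1}\esp{\uno{\tau<\infty}e^{-r_\star\tau/2}}$, which is cleaner.

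I expect the main obstacle to be the rigorous justification of the conditional second-moment estimate $\esp{(W(\tau)-W)^2 \mid \mathcal F_\tau} \leq C e^{-r_\star\tau}$ at a general stopping time $\tau$ rather than a deterministic time. This requires either invoking the strong Markov property of $Z$ at $\tau$ (so that, conditionally on $\mathcal F_\tau$ and on $Z(\tau) = k$, the future $W$-increment behaves like $e^{-r_\star\tau}$ times a sum of $k$ i.i.d.\ copies of the limit martingale minus $1$, each with variance $O(1)$), or a direct optional-stopping argument applied to the $L^2$-bounded martingale $W$ and its Doob decomposition. The branching property gives $\esp{(W - W(\tau))^2 \mid \mathcal F_\tau} = e^{-2r_\star\tau}\, Z(\tau)\, \sigma^2$ with $\sigma^2 = (b_\star+d_\star)/r_\star$, and since $\esp{Z(\tau)e^{-r_\star\tau}} = \esp{W(\tau)} \leq 1$ by optional stopping, one recovers $\esp{\uno{\tau<\infty}e^{-2r_\star\tau}Z(\tau)} \leq \esp{\uno{\tau<\infty}e^{-r_\star\tau}}$ after bounding $e^{-r_\star\tau}\leq 1$; this is exactly the estimate needed. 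The remaining parts — the density bound for $W$ near $0$ and the elementary inequalities — are routine.
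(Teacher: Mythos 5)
Your decomposition is exactly the paper's, and your idea of estimating $\esp{(W-W(\tau))^2\mid\mathcal F_\tau}$ directly via the branching property is in fact cleaner than the paper's telescoping over unit time steps. However, there is a genuine gap in the second-moment step. The strong Markov property gives
\begin{equation*}
\esp{(W-W(\tau))^2\mid\mathcal F_\tau} \;=\; \sigma^2\, e^{-2r_\star\tau}\,Z(\tau) \;=\; \sigma^2\,e^{-r_\star\tau}\,W(\tau)\qquad\text{on }\{\tau<\infty\},
\end{equation*}
which is \emph{not} bounded by $C e^{-r_\star\tau}$ uniformly, because $W(\tau)$ is unbounded. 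Your Markov and Cauchy--Schwarz variants both quietly use the false uniform bound. The repair you offer at the end also fails: $\esp{W(\tau)}\le 1$ by optional stopping does not imply $\esp{\uno{\tau<\infty}e^{-r_\star\tau}W(\tau)}\le\esp{\uno{\tau<\infty}e^{-r_\star\tau}}$ — that would require $W(\tau)\le 1$ pointwise, which is not the case — nor does ``bounding $e^{-r_\star\tau}\le 1$'' convert one into the other. Likewise the earlier remark that the exponent $-2$ can be traded for $-1$ ``when the quantity exceeds $1$'' does not yield a valid inequality for small $\eps$.

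The missing ingredient is that you must carry the constraint $W(\tau)\le\eps$ from the event $\{W(\tau)\le\eps; W>2\eps\}$ into the estimate (this event is automatically contained in $\{\tau<\infty\}$, as you noted). Concretely, $\{W(\tau)\le\eps;W>2\eps\}\subseteq\{\uno{\tau<\infty,\,W(\tau)\le\eps}(W-W(\tau))\ge\eps\}$, so by Markov, tower property and the identity above,
\begin{equation*}
\pro{W(\tau)\le\eps;W>2\eps}\;\le\;\eps^{-2}\,\sigma^2\,\esp{\uno{\tau<\infty,\,W(\tau)\le\eps}\,e^{-r_\star\tau}W(\tau)}\;\le\;\sigma^2\,\eps^{-1}\esp{\uno{\tau<\infty}e^{-r_\star\tau/2}},
\end{equation*}
where the last step uses $W(\tau)\le\eps$ and $e^{-r_\star\tau}\le e^{-r_\star\tau/2}$. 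Once this correction is made your argument is complete and is, if anything, more direct than the paper's route, which establishes the unit-increment bound $\esp{(W(\tau+1)-W(\tau))^2}\le Ce^{-r_\star\tau}$, telescopes $W-W(\tau)=\sum_{n\ge 0}(W(\tau+n+1)-W(\tau+n))$, and then passes from a.s.\ finite stopping times to general ones by Fatou's lemma. Your approach via a single conditional variance at $\tau$ avoids both the telescoping sum and the separate Fatou step (since you work on $\{\tau<\infty\}$ from the start), at the cost of having to be explicit about the $W(\tau)$ factor; the paper's approach glosses over that same factor but in a way that is more easily repaired by retaining $W(\tau)\le\eps$, exactly as above.
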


\begin{proof}
	To simplify the proof, we firstly treat the case where $\tau$ is deterministic and finite. To begin with,
	$$W(\tau+1) - W(\tau) = e^{-r_\star \tau}\left(\sum_{i=1}^{Z(\tau)} N_i e^{-r_\star} -Z(\tau)\right),$$
	where, conditionally on $Z(\tau),$ the $N_i$ are i.i.d. with the same distribution as~$Z(1).$ This implies that
	\begin{align}
	\esp{\left(W(\tau+1) - W(\tau)\right)^2} =& e^{-2r_\star \tau}\esp{\mathbb{V}ar\left[\sum_{i=1}^{Z(\tau)} \left(N_i e^{-r_\star} -1\right)|Z(\tau)\right]} \nonumber\\
	=& e^{-2r_\star \tau} \esp{Z(\tau)} \mathbb{V}ar \left[N_i e^{-r_\star} -1\right]\leq C e^{-r_\star \tau}.\label{branchprop}
	\end{align}
	
	On the other hand, we write
	\begin{multline*}
	\pro{W(\tau)\leq \eps\, ;\, W>0}\leq \pro{W\leq 2\eps\, ;\, W>0} + \pro{W(\tau)\leq \eps\, ;\, W>2\eps}\\
	\leq \pro{W\leq 2\eps\, ; \, W>0} + \pro{W - W(\tau) > \eps}.
	\end{multline*}
	Recalling that, on the event $\{W>0\},$ $W$ follows an exponential distribution, we can bound the first term of the sum above by $1-e^{-C\eps} \leq C\eps.$
	
	The second term of the sum can be handled  as follows:
	\begin{align*}
	\pro{W - W(\tau) > \eps} =& \pro{\sum_{n\geq 0}W(\tau + n +1) - W(\tau + n)>\eps}\\
	\leq& \frac{1}{\eps}\sum_{n\geq 0}\esp{\left|W(\tau + n +1) - W(\tau + n)\right|}\\
	\leq &\eps^{-1}\sum_{n\geq 0}\esp{\left|W(\tau + n +1) - W(\tau + n)\right|^2}^{1/2}\\
	\leq &C\eps^{-1}e^{-r_\star \tau /2}\sum_{n\geq 0} e^{-r_\star n/2}\leq C \eps^{-1} e^{-r_\star\tau/2}.
	\end{align*}
	This ends the proof   in the case where $\tau$ is deterministic.
	
	Now to prove the result when $\tau$ is some almost surely finite $Z-$adapted stopping time, we just have to do the same computation conditionally on~$\mathcal{F}_\tau.$ Indeed, since $Z(\tau)$ is $\mathcal{F}_\tau-$measurable and $Z(\tau+1)-Z(\tau)$ independent of $\mathcal{F}(\tau)$, \eqref{branchprop} becomes
	$$\mathbb{E}\left[\left(W(\tau+1) - W(\tau)\right)^2 |\mathcal{F}_\tau\right] \leq Ce^{-r_\star \tau},$$
	and the last computation of the previous case gives
	$$\mathbb{P}\left(W-W(\tau)>\eps|\mathcal{F}_\tau\right) \leq C\eps^{-1} e^{-r_\star \tau/2}.$$
	The result of the lemma is then proved by taking the expectation in the inequality above.
	
	Finally let us treat the case where $\tau$ is not necessary almost surely finite.
	We write
	\begin{align}
	\pro{W(\tau)\leq\eps\, ; \,W>0} =& \pro{W(\tau)\leq\eps\, ; \,W>0\, ; \,\tau<\infty} + \pro{W(\tau)\leq\eps\, ; \,W>0\, ; \,\tau=\infty}\nonumber\\
	\leq& \pro{W(\tau)\leq\eps\, ; \,W>0\, ; \,\tau<\infty} + \pro{0<W\leq\eps}.\label{controlwtau}
	\end{align}
	
	Note that, on the event $\{\tau<\infty\},$ we have the almost sure convergence
	$$\uno{W(\tau\wedge n)\leq\eps;W>0} \underset{n\rightarrow\infty}{\longrightarrow}\uno{W(\tau)\leq \eps;W>0}.$$
	
	Hence, by Fatou's lemma,
	\begin{align*}
	\pro{W(\tau)\leq\eps\, ; \,W>0\, ; \,\tau<\infty} \leq& \underset{n}{\lim\inf}~\pro{W(\tau\wedge n)\leq\eps\, ; \,W>0\, ; \,\tau<\infty}\\
	\leq& \underset{n}{\lim\inf}~\pro{W(\tau\wedge n)\leq\eps\, ; \,W>0}. 
	\end{align*}
	
	Then, applying the result of the lemma with the almost surely finite stopping time $\tau\wedge n,$ we obtain
	$$\pro{W(\tau)\leq\eps\, ; \,W>0;\tau<\infty} \leq C\left(\eps + \eps^{-1}\underset{n}{\lim\inf}~\esp{e^{-r_\star (\tau\wedge n)/2}}\right).$$
	
	Noticing that
	$$e^{-r_\star (\tau\wedge n)/2} \leq \uno{\tau<\infty}e^{-r_\star\tau/2} + e^{-r_\star n/2},$$
	we have
	$$\pro{W(\tau)\leq\eps\, ; \,W>0\, ; \,\tau<\infty} \leq C\left(\eps + \eps^{-1}\esp{\uno{\tau<\infty} e^{-r_\star \tau/2}}\right).$$
	
	In addition, to control the second term of the sum in~\eqref{controlwtau}, we recall that  the distribution of~$W$ is
	$$\frac{d_\star}{b_\star}\delta_0 + \frac{r_\star}{b_\star}{\cal E}(\frac{r_\star}{b_\star}).$$
	Hence
	$$\pro{0<W\leq\eps} = \frac{r_\star}{b_\star}\left(1-e^{-\frac{r_\star}{b_\star} \eps}\right)\leq (\frac{r_\star}{b_\star})^2\eps.$$
	
	Recalling~\eqref{controlwtau}, the result of the lemma is proved.
\end{proof}

\medskip
\begin{lemme}\label{infw}
With the same notation as in Lemma~\ref{propbranchement}, for any~$0<\eps<1,$
$$\pro{\underset{t\geq 0}{\inf}~W(t) \leq \eps\, ; \,W>0}\leq C\eps^{1/4},$$
for some positive constant~$C$ independent of~$\eps.$
\end{lemme}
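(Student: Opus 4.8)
The plan is to deduce this estimate from Lemma~\ref{propbranchement} by covering the half-line $[0,\infty[$ with finitely many time-windows plus a tail, and using the fact that on each window the martingale $W$ does not move too much. First I would fix a truncation level $T_\eps>0$ to be chosen later. On the tail $[T_\eps,\infty[$, I would bound
$$\pro{\inf_{t\geq T_\eps} W(t)\leq \eps\,;\,W>0} \leq \pro{W\leq 2\eps\,;\,W>0} + \pro{\sup_{t\geq T_\eps}|W(t)-W|>\eps},$$
and control the second term exactly as in the proof of Lemma~\ref{propbranchement}: writing $W(t)-W$ as a telescoping sum over integer times and using the $L^2$ bound $\esp{(W(\tau+n+1)-W(\tau+n))^2}\leq Ce^{-r_\star(\tau+n)}$ with $\tau = \lceil T_\eps\rceil$, this is $\leq C\eps^{-1}e^{-r_\star T_\eps/2}$. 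The first term is $\leq C\eps$ since $W$ is exponential on $\{W>0\}$.

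Next I would handle the compact part $[0,T_\eps]$. I would subdivide it into $\lceil T_\eps\rceil$ intervals of length $1$ (or a finer mesh if needed) and write
$$\pro{\inf_{t\leq T_\eps}W(t)\leq\eps\,;\,W>0} \leq \sum_{n=0}^{\lceil T_\eps\rceil -1}\pro{\inf_{t\in[n,n+1]}W(t)\leq \eps\,;\,W>0}.$$
On each such interval, since $W$ is a nonnegative martingale with $W(n)$ bounded below with high probability on $\{W>0\}$, I would apply Lemma~\ref{propbranchement} to a stopping time: take $\tau_n := \inf\{t\geq n : W(t)\leq\eps\}\wedge(n+1)$, which is $Z$-adapted, so that $\{\inf_{[n,n+1]}W\leq\eps\} = \{W(\tau_n)\leq\eps\}$ (up to the endpoint value), and $e^{-r_\star\tau_n/2}\leq e^{-r_\star n/2}$. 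Lemma~\ref{propbranchement} then gives $\pro{W(\tau_n)\leq\eps\,;\,W>0}\leq C(\eps + \eps^{-1}e^{-r_\star n/2})$, and summing over $n$ yields $\leq C(\eps T_\eps + \eps^{-1})$.

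Combining the two parts, the bound is of the form $C(\eps T_\eps + \eps^{-1} + \eps^{-1}e^{-r_\star T_\eps/2})$. Optimizing in $T_\eps$: choosing $T_\eps = \tfrac{c}{r_\star}\log(1/\eps)$ for a suitable constant makes the exponential term $\leq C\eps^{-1}\eps^{c/2}$, and the dominant surviving terms are $C\eps\log(1/\eps)$ and $C\eps^{-1}\eps^{c/2} = C\eps^{c/2-1}$; taking $c$ large enough (say $c\geq 5/2$, so $c/2-1\geq 1/4$) gives each term $\leq C\eps^{1/4}$. The main obstacle I anticipate is the bookkeeping in the compact part: one must be careful that Lemma~\ref{propbranchement} is applied to a genuine stopping time with the convention $W(+\infty):=W$, and that the number of windows $T_\eps\sim\log(1/\eps)$ is absorbed by the $\eps$ factor; a slightly cleaner route is to avoid the finite subdivision entirely and instead apply Lemma~\ref{propbranchement} directly to the single stopping time $\tau := \inf\{t\geq 0 : W(t)\leq\eps\}$, observing that on $\{W>0,\tau<\infty\}$ one has $e^{-r_\star\tau/2}\leq 1$, and then splitting on $\{\tau\leq T_\eps\}$ versus $\{\tau>T_\eps\}$ with the tail estimate above — this gives the same final bound with less combinatorics, so I would present that version.
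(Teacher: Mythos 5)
There is a genuine gap, and it is double-barrelled. Your compact-part bookkeeping already breaks: summing the Lemma~\ref{propbranchement} bounds over unit intervals gives
$\sum_{n=0}^{\lceil T_\eps\rceil-1}C\left(\eps+\eps^{-1}e^{-r_\star n/2}\right)\leq C\left(\eps T_\eps+\eps^{-1}\right)$,
and the $C\eps^{-1}$ comes from a convergent geometric series, so it is a fixed constant times $\eps^{-1}$ \emph{independent of $T_\eps$}; your optimization paragraph then silently drops this term, but it is the dominant one and cannot be made small by any choice of $T_\eps$. The same obstruction reappears in your ``cleaner route'': applying Lemma~\ref{propbranchement} to $\tau=\inf\{t\geq0:W(t)\leq\eps\}$ only gives $\eps^{-1}\esp{\uno{\tau<\infty}e^{-r_\star\tau/2}}$, and splitting at $T_\eps$ does nothing for the contribution of $\{\tau\leq T_\eps\}$.

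The observation you are missing, which is what makes the paper's proof work, is deterministic: on $\{W>0\}$ one has $Z(t)\geq 1$ for all $t$, hence $W(t)=Z(t)e^{-r_\star t}\geq e^{-r_\star t}>\eps$ for all $t<s_\eps:=\tfrac1{r_\star}\log(1/\eps)$. So the infimum simply cannot reach $\eps$ on $[0,s_\eps]$, and the whole compact part is vacuous; you should start from time $s_\eps$, not $0$. But even with this, applying Lemma~\ref{propbranchement} at level $\eps$ with a stopping time $\geq s_\eps$ only yields $C(\eps+\eps^{-1}e^{-r_\star s_\eps/2})=C(\eps+\eps^{-1/2})$, which is still useless. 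The second missing idea is to apply Lemma~\ref{propbranchement} at a \emph{higher} threshold $\eps^{1/\lambda}$ ($\lambda>1$), via the stopping time $S_\eps:=\inf\{t\geq s_\eps:W(t)\leq\eps^{1/\lambda}\}$. Since $\eps<\eps^{1/\lambda}$, the event $\{\inf_{t\geq s_\eps}W\leq\eps\}$ is contained in $\{W(S_\eps)\leq\eps^{1/\lambda}\}$, and since $S_\eps\geq s_\eps$ the lemma gives $C(\eps^{1/\lambda}+\eps^{-1/\lambda}\eps^{1/2})$. Optimizing the exponents at $\lambda=4$ produces $C\eps^{1/4}$. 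Without both of these ideas — the deterministic lower bound $W(t)\geq e^{-r_\star t}$ on the survival event, and the flexible threshold $\eps^{1/\lambda}$ — the claimed rate $\eps^{1/4}$ is out of reach.
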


\begin{proof}
    Let us introduce
    $$s_\eps := \frac1{r_\star}\log\left(1/\eps\right).$$
    
    On the event $\{W>0\},$ we have $Z(t)\geq 1$ for all $t\geq 0$ and further,  for $t<s_\eps, $ $e^{-r_\star t}>\eps.$ As a consequence
    $$\left\{\underset{t\geq 0}{\inf}~W(t) \leq \eps\, ; \,W>0\right\}\subseteq \left\{\underset{t\geq s_\eps}{\inf}~W(t) \leq \eps\, ; \,W>0\right\}.$$
    
    For some $\lambda>1$ whose value will be fixed later, let us denote
    $$S_\eps := \inf\left\{t\geq s_\eps~:~W(t) \leq \eps^{1/\lambda}\right\}.$$
    
    Since $\lambda>1$ and $0<\eps<1,$ we have $\eps < \eps^{1/\lambda},$ hence
    $$\left\{\underset{t\geq s_\eps}{\inf}~W(t) \leq \eps\right\}\subseteq\left\{S_\eps<\infty\right\}\subseteq\left\{W(S_\eps)\leq\eps^{1/\lambda}\right\}.$$
    
    Note that the last event above is trivially satisfied only if~$S_\eps<\infty.$
    
    Then, thanks to Lemma~\ref{propbranchement},
    $$\pro{W(S_\eps)\leq \eps^{1/\lambda}\, ; \,W>0} \leq C\left(\eps^{1/\lambda} + \eps^{-1/\lambda}\esp{\uno{S_\eps<\infty}e^{-r_\star S_\eps/2}}\right).$$
    
    Recall that, by definition of $S_\eps,$ $S_\eps\geq s_\eps,$ and so
    $$\esp{\uno{S_\eps<\infty}e^{-r_\star S_\eps/2}} \leq e^{-r_\star s_\eps/2} = \eps^{1/2},$$
    implying that
    $$\pro{W(S_\eps)\leq \eps^{1/\lambda}\, ; \,W>0} \leq C\left(\eps^{1/\lambda} + \eps^{1/2-1/\lambda}\right).$$
    
    Finally, choosing $\lambda := 4$ to optimize the bound above proves the result.
\end{proof}

\subsection{Perturbation of stable  dynamical system}

We provide a result which is strongly inspired from Lemmas~3.1 and~3.2 in \cite{prodhomme_strong_2020}.
We consider an exponential stable dynamical system  : $y'\leq -c y $, with $c>0$. We add
a source term, which acts as a perturbation given by $h$. The following result allows to control the value of the dynamical system in terms of fluctuations of the source term $h$ on a unit time interval.
\begin{lemme}\label{lemmeadrien}
	Assume that there are three measurable and locally bounded functions~$y,h,\phi$ with $h(0)=0$ and  satisfying for all~$t\geq 0,$
	$$y(t) = y(0) + h(t) + \int_0^t y(s)\phi(s)ds.$$
	Suppose in  addition that $\phi$ is bounded, and upper-bounded by some negative number. 
 Then, for all $t\geq 0,$
	$$\underset{s\leq t}{\sup}~|y(s)|\leq \Gamma~ \left(|y(0)|\vee\underset{s\leq t}{\sup}\left|h(s)-h(\lfloor s\rfloor)\rr|\rr),$$
	where ~$\Gamma=(1+||\phi||_{\infty})/(1-\exp(\sup \phi ))$.
\end{lemme}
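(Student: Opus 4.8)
The plan is to discretize time into unit intervals and show that the supremum of $|y|$ over each interval is controlled by its value at the left endpoint plus the fluctuation of $h$ on that interval, and then to establish a geometric contraction of the endpoint values $|y(n)|$ as $n$ increases. Write $c := -\sup\phi > 0$, so $\phi(s) \leq -c$ for all $s$, and $\|\phi\|_\infty$ bounds $|\phi|$. The key object is the solution of the linear ODE: from the integral equation, for $n \leq t$ one has the variation-of-constants representation
$$y(t) = y(n)\exp\!\left(\int_n^t \phi(u)\,du\right) + \int_n^t \exp\!\left(\int_s^t \phi(u)\,du\right) d h(s),$$
valid because $h$ has locally bounded variation is not assumed — so instead I would avoid a Stieltjes integral and argue directly. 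A cleaner route: set $w(s) := y(s) - (h(s) - h(n))$ on $[n, n+1]$; then $w$ is absolutely continuous with $w'(s) = y(s)\phi(s) = \big(w(s) + h(s) - h(n)\big)\phi(s)$, i.e. $w' - \phi w = (h(s)-h(n))\phi(s)$, which I can integrate with the integrating factor $\exp(-\int_n^s \phi)$.

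From that, for $s \in [n, n+1]$,
$$|y(s)| \leq |w(s)| + |h(s) - h(n)| \leq |y(n)|\,e^{-c(s-n)} + \Big(1 + \int_n^s |\phi(u)|\,e^{-c(s-u)}\,du\Big)\sup_{n\leq r\leq n+1}|h(r)-h(n)|,$$
using $|w(n)| = |y(n)|$ and bounding the forced term crudely by pulling the sup of $|h(s)-h(n)|$ out. Since $\int_n^s |\phi(u)| e^{-c(s-u)} du \leq \|\phi\|_\infty \int_n^s e^{-c(s-u)}du \leq \|\phi\|_\infty/c$ — but more usefully for the endpoint recursion I want the factor $e^{-c}$ at $s = n+1$. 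Evaluating at $s = n+1$ gives $|y(n+1)| \leq e^{-c}|y(n)| + (1 + \|\phi\|_\infty)\,\delta_n$ where $\delta_n := \sup_{n\leq r \leq n+1}|h(r) - h(n)| = \sup_{n \leq r \leq n+1}|h(r) - h(\lfloor r\rfloor)|$. Iterating this affine recursion, $|y(n)| \leq e^{-cn}|y(0)| + (1+\|\phi\|_\infty)\sum_{k=0}^{n-1} e^{-ck}\delta_k \leq |y(0)| + \frac{1+\|\phi\|_\infty}{1 - e^{-c}}\sup_{s}|h(s)-h(\lfloor s\rfloor)|$.

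Finally, to pass from the endpoint bound to the full supremum over $[0,t]$: for arbitrary $s$, write $s \in [n, n+1]$ with $n = \lfloor s \rfloor$ and use the within-interval bound above together with $\int_n^s |\phi| e^{-c(s-u)}du \leq \|\phi\|_\infty$ (or simply $\leq \|\phi\|_\infty/c \cdot c$, bounded by $\|\phi\|_\infty$ since the interval has length $\le 1$), giving $|y(s)| \leq |y(n)| + (1+\|\phi\|_\infty)\delta_n \leq \Gamma(|y(0)| \vee \sup_{r\leq t}|h(r)-h(\lfloor r\rfloor)|)$ after absorbing constants into $\Gamma = (1+\|\phi\|_\infty)/(1 - e^{\sup\phi})$; one checks the arithmetic so that the stated $\Gamma$ indeed dominates both the geometric sum prefactor and the single within-interval correction. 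The main obstacle is purely bookkeeping: making sure the integrating-factor computation is legitimate under the weak hypotheses (only measurability and local boundedness of $h$, no BV), which is why I would work with $w = y - (h - h(n))$ so that the only ODE being solved is the manifestly absolutely continuous $w' = \phi w + \phi\cdot(h - h(n))$ with a bounded forcing term, and then verify that the resulting constant collapses exactly to the claimed $\Gamma$.
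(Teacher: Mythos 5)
Your argument is correct and follows essentially the same strategy as the paper: discretize time into unit intervals, use the per-interval decay factor $e^{-c}$ to set up a contracting recursion for $|y(n)|$, and bound the within-interval supremum by the unit-interval fluctuation of $h$; the paper organizes this as a global variation-of-constants formula $y(t)=\Phi(t,0)y(0)+h(t)+\int_0^t\Phi(t,s)\phi(s)h(s)\,ds$ followed by an Abel-type resummation at integer points, while you work interval by interval via the substitution $w=y-(h-h(n))$, which is a cleaner way of sidestepping the absence of a BV hypothesis on $h$. One small caveat on your closing remark that ``the resulting constant collapses exactly to the claimed $\Gamma$'': it does not quite --- both your bookkeeping and the paper's actually yield a factor on the order of $1+(1+\|\phi\|_\infty)\bigl(1+1/(1-e^{-c})\bigr)$, because the final fractional interval contributes an extra term that is not absorbed by the geometric sum; this is immaterial for how the lemma is used (only the existence of a finite constant depending on $\sup\phi$ and $\|\phi\|_\infty$ matters), but the literal $\Gamma$ stated is slightly too small.
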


\begin{proof}
To begin with, we introduce
		$$\Phi(t,s) = e^{\int_s^t \phi(r)dr} \leq e^{-C(t-s)},$$
		for some positive constant~$C := -\sup\phi>0.$
		Then
		\begin{align*}
		\Phi(t,t) =& 1 \ ;\ \Phi(t,s) \Phi(s,u) = \Phi(t,u),\\  \partial_t\Phi(t,s) =& \Phi(t,s)\phi(t) \ ; \ 
		\partial_s\Phi(t,s) = -\Phi(t,s)\phi(s).
		\end{align*}
		By a standard  constant variation argument, one can write
		$$y(t) = \Phi(t,0)y(0) + h(t) +\int_{0}^t \Phi(t,s) h(s) \phi(s) ds.$$

Then  it results the following decomposition:
		\begin{align*}
		y(t)=& \Phi(t,0)y(0) + \sum_{j=1}^{\lfloor t\rfloor} \Phi(t,j)\left(h(j) - h(j-1) +\int_{j-1}^j \Phi(j,s)\phi(s)\left(h(s) - h(j-1)\rr)ds\rr)\\
		&+h(t) - h(\lfloor t\rfloor) + \int_{\lfloor t\rfloor}^t \Phi(t,s)\phi(s)\left(h(s) - h(\lfloor t\rfloor)\rr)ds.
		\end{align*}
		Using the upperbound of $\phi$, we obtain
		\begin{align*}
		|y(t)|\leq& e^{-Ct}|y(0)| + \sum_{j=1}^{\lfloor t\rfloor} e^{-C(t-j)}\left(\underset{j-1\leq s\leq j}{\sup} \left|h(s) - h(j-1)\rr|\Gamma'\rr) + \underset{\lfloor t\rfloor\leq s\leq t}{\sup} \left|h(s) - h(\lfloor t\rfloor)\rr|\Gamma',
		\end{align*}
		with $\Gamma' := 1 + ||\phi||_{\infty}.$  The result is proved by choosing
		$\Gamma = \Gamma'\sum_{j=0}^\infty e^{-Cj} = \Gamma'/(1-e^{-C}).$
\end{proof}

\bigskip

\noindent \textbf{Acknowledgements}:
\begin{itemize}
\item The authors are very grateful to Pierre Collet who has clarified some issue  involved in the control of hitting values of the dynamical system started close to the equilibrium. They also thank very warmly the two referees of the first version of this work, whose many comments  led to a major improvement of the paper.
\item This work has been supported by the Chair ``Mod\'elisation Math\'ematique et Biodiversit\'e of Veolia Environnement-Ecole Polytechnique-Museum national d'Histoire naturelle-Fondation X  and by  ITMO Cancer of Aviesan within the framework of the 2021-2023 Cancer Control Strategy, on funds administrated by Inserm. 
\item Funded by the European Union (ERC, SINGER, 101054787). Views and opinions expressed are however those of the author(s) only and do not necessarily reflect those of the European Union or the European Research Council. Neither the European Union nor the granting authority can be held responsible for them.
\end{itemize}

{\footnotesize
\providecommand{\noopsort}[1]{}\providecommand{\noopsort}[1]{}\providecommand{\noopsort}[1]{}\providecommand{\noopsort}[1]{}

}

\end{document}